\newtheorem{theorem}{Theorem}
\newtheorem{lemma}[theorem]{Lemma}
\newtheorem{proposition}[theorem]{Proposition}
\newtheorem{corollary}[theorem]{Corollary}
\theoremstyle{remark}
\newtheorem{definition}[theorem]{Definition}
\newtheorem{remark}[theorem]{Remark}
\newtheorem{example}[theorem]{Example}
\numberwithin{theorem}{section}
\numberwithin{equation}{section}
\newcommand{\RMe}{\mathrm{e}}
\newcommand{\R}{\mathbb{R}}
\renewcommand{\SS}{\mathbb{S}}
\newcommand{\Splus}[1][n-1]{\SS^{#1}_{+}}
\newcommand{\Sphere}[1][n-1]{\SS^{#1}}
\newcommand{\E}{{\mathbf E}}
\renewcommand{\P}{{\mathbf P}}
\newcommand{\Q}{{\mathbf Q}}
\newcommand{\deq}{\stackrel{d}{=}}
\newcommand{\sS}{\mathcal{S}}
\newcommand{\sK}{\mathscr{K}}
\newcommand{\sM}{\mathscr{M}}
\newcommand{\Lp}[1][p]{\mathsf{L}^{#1}}
\newcommand{\LL}{\mathsf{L}}
\DeclareMathOperator{\sign}{sign}
\newcommand{\one}{\mathbbm{1}}
\renewcommand{\emptyset}{\varnothing}
\newlength{\querylen}
\begin{document}

\title{Diagonal Minkowski classes, zonoid equivalence, and stable laws}

\author{Ilya~Molchanov, Felix~Nagel\\
  \small 
  Institute of Mathematical Statistics and Actuarial Science,\\
  \small University of Berne}

\date{}

\maketitle

\begin{abstract}
  We consider the family of convex bodies obtained from an origin
  symmetric convex body $K$ by multiplication with diagonal matrices,
  by forming Minkowski sums of the transformed sets, and by taking
  limits in the Hausdorff metric. Support functions of these convex
  bodies arise by an integral transform of measures on the family of
  diagonal matrices, equivalently, on Euclidean space, which we call
  $K$-transform. In the special case, if $K$ is a segment not lying on
  any coordinate hyperplane, one obtains the family of zonoids and the
  cosine transform. In this case two facts are known: the vector space
  generated by support functions of zonoids is dense in the family of
  support functions of origin symmetric convex bodies; and the cosine
  transform is injective. We show that these two properties are
  equivalent for general $K$.

  For $K$ being a generalised zonoid, we determine conditions that
  ensure the injectivity of the $K$-transform. Relations to mixed
  volumes and to a geometric description of one-sided stable laws are
  discussed. The later probabilistic application gives rise to a
  family of convex bodies obtained as limits of sums of diagonally
  scaled $\ell_p$-balls.

  Keywords: cosine transform; 
  diagonal transformation; Minkowski class; stable law; zonoid

  MSC(2010) Classification: 52A21, 52A22, 60D05, 60E07
\end{abstract}

\section{Introduction}

\subsection{Geometric background}
\label{sec:geometric-background}

Typical transformations applied to convex bodies (non-empty convex
compact sets) in Euclidean space are scaling, translation and rotation,
or the whole group of invertible linear
transformations. Considerably less is known about the case when convex
bodies are transformed by actions of diagonal matrices, subsequently
called \emph{diagonal transformations}.

If $K$ is an origin symmetric \emph{segment} which does not lie on a
coordinate hyperplane, then the family of convex bodies generated by
diagonal transformations applied to $K$ is the same as the family
generated by rotations and scaling. For most other convex bodies $K$,
the two generated classes are not the same. While diagonal
transformations might seem too much dependent on the choice of the
coordinate system, their use is further motivated by a probabilistic
interpretation described below.

For two vectors $x,y\in\R^n$, 
\begin{displaymath}
  xy =(x_1y_1,\dots,x_ny_n)
\end{displaymath}
denotes their Hadamard (componentwise) product; for a convex body $K$
and any $u\in\R^n$,
\begin{displaymath}
  uK=\{ux:\; x\in K\}
\end{displaymath}
denotes $K$ transformed by the diagonal matrix with the diagonal
elements given by $u$. The Minkowski sum of two compact sets $K$ and
$L$ is $K+L=\{x+y:\; x\in K,y\in L\}$. The \emph{support function} of
a compact set $K$ is defined by
\begin{displaymath}
  h(K,u)=\sup\{\langle u,x\rangle:\; x\in K\}, \quad u\in\R^n,
\end{displaymath}
where $\langle \cdot,\cdot\rangle$ denotes the inner product in
$\R^n$.  In this paper we consider convex bodies that are symmetric,
which is always understood with respect to the origin.

Taking sums of diagonally transformed $K$ we obtain convex bodies
\begin{equation}
  \label{eq:21}
  c_1v_1K+\cdots +c_mv_mK
\end{equation}
for non-negative $c_1,\dots,c_m$ and $v_1,\dots,v_m$ from the unit
sphere. Passing to the limit, we arrive at the family of convex bodies
with support functions given by integrals of $h(vK,u)$ with respect to
a finite measure $\mu$ on the unit sphere $\Sphere$. These convex
bodies are called \emph{diagonal bodies generated by $K$}. Further,
for signed measures $\mu$ on $\Sphere$, consider the map
\begin{equation}
  \label{eq:13}
  \mu\mapsto (T_K\mu)(u)=\int_{\Sphere} h(vK,u)\mu(dv),
\end{equation}
which we call the \emph{$K$-transform of $\mu$}.
The central topic of this paper is the richness of the family of sets
given by \eqref{eq:21}, shown to be closely related to the injectivity
property of the $K$-transform.

Each symmetric segment with end-points $-u$ and $u$ is obtained as the
diagonal transformation $uI$ of the segment
\begin{equation}
  \label{eq:8}
  I=[-(1,\dots,1),(1,\dots,1)]
\end{equation}
with the end-points $(-1,\dots,-1)$ and $(1,\dots,1)$. Thus, applied
to $I$, the diagonal transformations yield the same family of sets as
scalings and rotations. Recall that $h(vI,u)=|\langle u,v\rangle|$. If
$\mu$ is a measure, $T_I\mu$ is the support function of a
\emph{zonoid}, see \cite[Th.~3.5.3]{schn2} and \cite{vi91e}.
\emph{Generalised zonoids} are defined by their support
functions $T_I\nu$ for signed measures $\nu$, more precisely, $K$ is a
generalised zonoid if 
\begin{equation}
  \label{eq:7}
  h(K,u) = \int_{\Sphere} |\langle u, v \rangle|\nu(dv),
  \quad u \in \R^n,
\end{equation}
where $\nu$ is a finite signed measure on $\Sphere$ called the
\emph{representing measure of $K$}. 

Consider the family of convex bodies with support functions $T_K\mu$
for (non-negative) finite measures $\mu$.  This family is the smallest
Minkowski class that contains $K$ and is invariant under diagonal
transformations; we call it the \emph{diagonal Minkowski class} generated by
$K$.  If diagonal transformations are replaced by general invertible
linear transformations and $K$ belongs to a certain family
$\mathscr{A}$ of convex bodies, the generated
$\mathrm{GL}_n(\R)$-invariant Minkowski class has been considered in
\cite{ales03}; its elements are called
$\mathscr{A}$-bodies. Restriction to the group of rotations yields a
rotation invariant Minkowski class, see \cite{sch:sch07}.  The central
question in \cite{ales03,sch:sch07} concerns the richness of the
$\mathrm{GL}_n(\R)$-invariant or rotation invariant Minkowski classes.

\subsection{Probabilistic background}
\label{sec:prob-backgr}

From the probabilistic standpoint, the paper aims to determine which
information on integrable random vectors is available from the
expected values of certain sets of one-homogeneous even functions
applied to them. A function $f:\R^n\mapsto\R_+$ is said to be
\emph{one-homogeneous} if $f(cx)=cf(x)$ for all $c\geq 0$ and
$x\in\R^n$. A major result in this direction is the following theorem
from \cite{mol:sch:stuc14}.

\begin{theorem}
  \label{thr:m:s:s}
  Let $\xi$ and $\eta$ be two integrable random vectors in
  $\R^n$. Then
  \begin{equation}
    \label{eq:2}
    \E f(\xi)= \E f(\eta)
  \end{equation}
  for all measurable one-homogeneous even functions
  $f:\R^n\mapsto\R_+$ if and only if 
  \begin{equation}
    \label{eq:1}
    \E|\langle u,\xi\rangle|=\E|\langle u,\eta\rangle|, \quad u\in\R^n.
  \end{equation}
\end{theorem}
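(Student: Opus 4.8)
The forward implication is immediate: for each fixed $u\in\R^n$ the map $x\mapsto|\langle u,x\rangle|$ is measurable, one-homogeneous and even with values in $\R_+$, so \eqref{eq:1} is the special case of \eqref{eq:2} obtained by inserting these functions. The content is the reverse implication, and the plan is to encode each law by a finite measure on the sphere and then invoke the injectivity of the cosine transform.

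First I would pass to a radial decomposition. Every one-homogeneous $f$ vanishes at the origin, so $\xi$ enters $\E f(\xi)$ only through its restriction to $\{\xi\neq0\}$. For an integrable random vector $\xi$ in $\R^n$, define a Borel measure $\rho_\xi$ on $\Sphere$ by
\[
  \rho_\xi(A)=\E\bigl[\,|\xi|\,\one\{\xi\neq0,\,\xi/|\xi|\in A\}\,\bigr],
\]
where $|\cdot|$ is the Euclidean norm. Integrability gives $\rho_\xi(\Sphere)\le\E|\xi|<\infty$, so $\rho_\xi$ is finite, and writing $\xi=|\xi|\,(\xi/|\xi|)$ on $\{\xi\neq0\}$ together with one-homogeneity yields
\[
  \E f(\xi)=\int_{\Sphere}f(v)\,\rho_\xi(dv)
\]
for every measurable one-homogeneous $f:\R^n\to\R_+$ (both sides possibly $+\infty$ when $f$ is unbounded), and likewise for $\eta$ and $\rho_\eta$.

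Next I would symmetrise. Let $\bar\rho_\xi$ and $\bar\rho_\eta$ be the even measures $\bar\rho_\xi(A)=\tfrac12\bigl(\rho_\xi(A)+\rho_\xi(-A)\bigr)$; since $v\mapsto|\langle u,v\rangle|$ is even, \eqref{eq:1} says precisely that the cosine transforms of the finite even measures $\bar\rho_\xi$ and $\bar\rho_\eta$ coincide. Invoking the injectivity of the cosine transform on finite even measures on $\Sphere$ (the classical fact recalled in Section~\ref{sec:geometric-background}, provable via the expansion of $v\mapsto|\langle u,v\rangle|$ in spherical harmonics, whose coefficients of all even degrees are nonzero), I obtain $\bar\rho_\xi=\bar\rho_\eta$. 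Finally, for any measurable one-homogeneous \emph{even} $f:\R^n\to\R_+$, evenness gives $\int f\,d\rho_\xi=\int f\,d\bar\rho_\xi$ and $\int f\,d\rho_\eta=\int f\,d\bar\rho_\eta$, so
\[
  \E f(\xi)=\int_{\Sphere}f\,d\bar\rho_\xi=\int_{\Sphere}f\,d\bar\rho_\eta=\E f(\eta),
\]
which is \eqref{eq:2}.

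The genuinely non-elementary input is the injectivity of the cosine transform in the third step; I would cite it rather than reprove it. The only place demanding care is the change-of-variables identity $\E f(\xi)=\int f\,d\rho_\xi$: one checks that $\xi/|\xi|$ is a measurable map into $\Sphere$ and that the identity holds for all nonnegative measurable one-homogeneous $f$ — proving it first for bounded continuous $f$ and extending by monotone convergence, or a monotone class argument — after which the matching of function classes in the last step is automatic.
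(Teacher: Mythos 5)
Your proof is correct and follows exactly the route the paper indicates for this cited result: the forward implication is immediate, and the reverse one encodes each law by the finite spherical measure $\rho_\xi$, reads \eqref{eq:1} as equality of the cosine transforms of the symmetrised measures, and invokes the injectivity of the cosine transform on finite even (signed) measures to deduce \eqref{eq:2}. This is the same device the paper itself employs (the measure $\mu_\xi$ in the proof of Proposition~\ref{prop:sphere}), so your argument matches the intended proof in all essentials.
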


It is obvious that \eqref{eq:2} implies \eqref{eq:1}; the inverse
implication relies on the injectivity of the \emph{cosine transform}
\begin{equation}
  \label{eq:3}
  \mu\mapsto \int_{\Sphere} |\langle u,v\rangle|\mu(dv),
\end{equation}
which maps an even signed finite measure $\mu$ on the unit sphere $\Sphere$
to a homogenous function of $u\in\R^n$, see
\cite[Sec.~3.5]{schn2}. 
It is apparent that the cosine transform \eqref{eq:3} is a special
case of \eqref{eq:13} for $K=I$ from \eqref{eq:8}.

The right-hand side of \eqref{eq:3} is the support function of a
convex body in $\R^n$, called the \emph{zonoid} of $\mu$, see
\cite[Sec.~3.5]{schn2}. Similarly, $\E|\langle \xi,u\rangle|$ is the
support function of the zonoid of $\xi$, see \cite{vi91e} and
\cite{mos02}. Note that 
\begin{displaymath}
  \E|\langle \xi,u\rangle|=\E h(uI,\xi)=\E h(\xi I,u),
  \quad u\in\R^n.
\end{displaymath}
Two integrable random vectors $\xi$ and $\eta$ satisfying \eqref{eq:1}
are said to be \emph{zonoid equivalent}, see, e.g., \cite{mos02}.  The
extent to which zonoid equivalent random vectors can have different
distributions is explored in \cite{mol:sch:stuc14}.  

In the following we replace the function $|\langle u,x\rangle|$ above
with $h(uK,x)$ with the aim to describe convex bodies $K$ such that
the equality
\begin{equation}
  \label{eq:12}
  \E h(uK,\xi)=\E h(uK,\eta),\quad u\in\R^n,
\end{equation}
yields the zonoid equivalence of $\xi$ and $\eta$, hence, also
\eqref{eq:2} for all one-homogeneous even~$f$.  The zonoid equivalence
of $\xi$ and $\eta$ immediately yields \eqref{eq:12}.  The inverse
implication is not true for general $K$, as the following example
shows.

\begin{example}
  \label{ex:1}
  Let $K=[-1,1]^2$ be the $\ell_\infty$-ball in $\R^2$. For
  integrable $\xi=(\xi_1,\xi_2)$, we have 
  \begin{displaymath}
    \E h(uK,\xi)=\E|u_1\xi_1|+\E|u_2\xi_2|,
  \end{displaymath}
  which is determined by the first absolute moments of the coordinates
  of $\xi$, and so carries considerably less information than the
  zonoid of $\xi$.
\end{example}

\begin{example}
  \label{ex:2}
  Let $K=B_2$ be the unit Euclidean ball in $\R^n$. Then 
  \begin{displaymath}
    \E h(uB_2,\xi)=\E(u_1^2\xi_1^2+\cdots+u_n^2\xi_n^2)^{1/2}
  \end{displaymath}
  is the expected Euclidean norm of $u\xi$. Such expectations 
  carry the same information as the zonoid of $\xi$ if all components
  of $\xi$ are non-negative, see Theorems~\ref{thr:main} and
  \ref{th:lpzonoid}.
\end{example}

Note that $\E h(\xi K,u)$ is the support function of a convex body
$\E(\xi K)$ called the expectation of $\xi K$, see \cite{mo1}. In
particular, the zonoid of $\xi$ is the expectation of the random
segment $\xi I$, see \cite{vi91e}.

It is possible to define an $\Lp$-version of zonoid equivalence using
the $p$th moments of $|\langle u,\xi\rangle|$ or their
max-combinations. In this respect, the following result is important.

\begin{theorem}[see \protect{\cite[Th.~1.1]{wan:stoev10}}]
  \label{thr:ws}
  Let $\xi$ and $\eta$ be $\alpha$-integrable random vectors in $\R_+^n$
  with $\alpha\in(0,2)$. Then
  \begin{displaymath}
    \E |\langle u,\xi\rangle|^\alpha=\E |\langle u,\eta\rangle|^\alpha,
    \quad u\in \R^n,
  \end{displaymath}
  if and only if 
  \begin{displaymath}
    \E \big(\max_{i=1,\dots,n} u_i\xi_i\big)^\alpha 
    =\E \big(\max_{i=1,\dots,n} u_i\eta_i\big)^\alpha,
    \quad u\in\R_+^n. 
  \end{displaymath}
\end{theorem}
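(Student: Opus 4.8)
The plan is to reduce the $\alpha$th-moment identity for linear functionals on $\R_+^n$ to the identity for maxima via a pointwise integral representation of $t\mapsto t^\alpha$ that turns both sides into cosine-type/max-type transforms of the same underlying measure. First I would recall the classical fact that for $\alpha\in(0,2)$ there is a constant $c_\alpha>0$ such that
\begin{equation}
  \label{eq:ws-rep}
  t^\alpha = c_\alpha \int_0^\infty \frac{1-\cos(ts)}{s^{1+\alpha}}\,\mrmd s,
  \qquad t\in\R,
\end{equation}
and the analogous one-sided representation in which $1-\cos(ts)$ is replaced by $1-\RMe^{-ts}$ (valid for $t\geq0$, $\alpha\in(0,1)$) or by $(ts-\sin(ts))$-type kernels for $\alpha\in(1,2)$. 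Because $\xi$ has non-negative coordinates, $\langle u,\xi\rangle\geq0$ whenever $u\in\R_+^n$, so one may work with the Laplace-type kernel and write
\begin{displaymath}
  \E|\langle u,\xi\rangle|^\alpha
  = c_\alpha\int_0^\infty \frac{1-\E\RMe^{-s\langle u,\xi\rangle}}{s^{1+\alpha}}\,\mrmd s .
\end{displaymath}
The key algebraic observation is the elementary identity, valid for non-negative reals,
\begin{displaymath}
  \max_{i=1,\dots,n} a_i
  = \int_0^\infty\Big(1-\prod_{i=1}^n\one\{a_i\leq r\}\Big)\mrmd r ,
\end{displaymath}
so that $\big(\max_i u_i\xi_i\big)^\alpha$ admits a similar integral-over-$r$ representation with an $r^{-1+\alpha}$-type weight; taking expectations expresses $\E(\max_i u_i\xi_i)^\alpha$ through the joint lower-tail function $r\mapsto\Prob{u_i\xi_i\leq r,\ i=1,\dots,n}$. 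The bridge between the two is the multivariate exponential/de Finetti-type formula expressing $1-\prod_i\one\{u_i\xi_i\le r\}$, after mixing, in terms of Laplace transforms, or equivalently a change of variables relating the radial parameter $s$ in the Laplace representation to the threshold $r$ in the max representation.

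The cleaner route, and the one I would actually carry out, is to pass to the spectral (LePage/Ferguson–Klass) description of stable laws: for $\alpha\in(0,2)$ and a random vector $\zeta\in\R_+^n$ with $\E\|\zeta\|^\alpha<\infty$, one can build an $\alpha$-stable random vector $X_\zeta$ in $\R^n$ whose spectral measure is the distribution of $\zeta/\|\zeta\|$ weighted by $\|\zeta\|^\alpha$; then $\langle u,X_\zeta\rangle$ is a one-dimensional stable variable whose scale parameter equals (a constant times) $\E|\langle u,\zeta\rangle|^\alpha$ for $u$ in the relevant cone, while $\max_i u_iX_{\zeta,i}$ — or rather the associated extremal/Fréchet object — has an analogous parameter governed by $\E(\max_i u_i\zeta_i)^\alpha$. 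Concretely, I would use the correspondence between sum-stable and max-stable (Fréchet) vectors: the exponent measure of the Fréchet vector with the same spectral measure has the property that its "$\ell_1$-type" and "$\ell_\infty$-type" functionals are exactly the two sides of the claimed equivalence. Thus the statement becomes: two such spectral/exponent measures coincide iff they induce the same scale functions, which is the injectivity of the map from the spectral measure on the positive sphere to the function $u\mapsto\E|\langle u,\zeta\rangle|^\alpha$ on $\R_+^n$, and simultaneously the injectivity of the map to $u\mapsto\E(\max_i u_i\zeta_i)^\alpha$ on $\R_+^n$.

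The remaining work is therefore two injectivity statements on the positive orthant. For the linear side: knowing $\E|\langle u,\zeta\rangle|^\alpha$ for all $u\in\R_+^n$ determines, via \eqref{eq:ws-rep} and uniqueness of the Laplace transform on $\R_+^n$ (restricting to $u\in\R_+^n$ already pins down the one-dimensional Laplace transforms $\E\RMe^{-s\langle u,\zeta\rangle}$ for all $s>0$, hence the distribution of $\langle u,\zeta\rangle$, hence — letting $u$ vary — the distribution of $\zeta$ itself, or at least the projections needed to recover the "$\alpha$-moment measure" $\|\zeta\|^\alpha\,\P_{\zeta/\|\zeta\|}$). For the max side: the quantity $\E(\max_i u_i\zeta_i)^\alpha$ is, up to a constant, the value at $u$ of the tail exponent of the associated $\alpha$-Fréchet random vector, and it is classical (Fréchet/de Haan) that a multivariate extreme-value distribution is uniquely determined by its exponent function evaluated on $\R_+^n$, which in turn determines the exponent (spectral) measure. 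Equating the two gives a single object — the $\alpha$-moment measure of $\zeta$ — recoverable from either side, proving the equivalence. I expect the main obstacle to be the bookkeeping in the $\alpha\in(1,2)$ range, where the naive one-sided Laplace kernel diverges and one must either subtract a linear term (using that first moments are finite by $\alpha$-integrability and matching them separately) or invoke the sine-type kernel; the positivity of the coordinates is what makes this manageable, and I would handle $\alpha\in(0,1)$, $\alpha=1$, and $\alpha\in(1,2)$ as three cases with the same skeleton but different truncations.
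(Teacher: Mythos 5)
First, a point of reference: the paper does not prove this theorem at all; it is quoted from Wang and Stoev \cite{wan:stoev10}, so there is no internal proof to compare against. Your ``cleaner route'' is in spirit the association of sum- and max-stable laws through a common spectral measure, which is exactly the mechanism behind the cited result: both sides should be read as transforms of the moment measure $\Gamma_\xi(A)=\E\big(\|\xi\|^\alpha\one_{\xi/\|\xi\|\in A}\big)$ on $\Splus$, namely $u\mapsto\int\langle u,v\rangle^\alpha\Gamma_\xi(dv)$ and $u\mapsto\int\max_i(u_iv_i)^\alpha\Gamma_\xi(dv)$, and the theorem follows once each transform is shown to determine $\Gamma_\xi$. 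On the max side your appeal to uniqueness of the exponent measure of an $\alpha$-Fr\'echet law (de Haan) is correct.

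The genuine gap is on the linear side. Knowing $\E|\langle u,\xi\rangle|^\alpha$ does not ``pin down'' the Laplace transforms $\E\RMe^{-s\langle u,\xi\rangle}$ --- the kernel representation produces the $\alpha$-moment from the Laplace transform, not conversely --- and it certainly does not determine the distribution of $\xi$: the whole theme of the surrounding paper is that only the measure $\Gamma_\xi$ is recoverable, not the law of $\xi$. Worse, the injectivity you assert for the map $\Gamma\mapsto\int\langle u,v\rangle^\alpha\Gamma(dv)$ with $u$ restricted to $\R_+^n$ is false at $\alpha=1$: the measures $\delta_{e_1}+\delta_{e_2}$ and $\sqrt2\,\delta_{(e_1+e_2)/\sqrt2}$ (equivalently, $\xi$ uniform on $\{2e_1,2e_2\}$ versus $\eta=(1,1)$ deterministic) give the same linear functional $u_1+u_2$ on $\R_+^2$ but different max-functionals $u_1+u_2$ versus $\max(u_1,u_2)$; this is precisely why the theorem demands the linear equality for all $u\in\R^n$. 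The ingredient you actually need is the injectivity of the $\alpha$-cosine transform on even measures for $\alpha\in(0,2)$ (equivalently, uniqueness of the spectral measure of a symmetric $\alpha$-stable vector), applied to the symmetrization of $\Gamma_\xi$ and using the full range $u\in\R^n$; your Laplace-transform substitute works only for $\alpha\in(0,1)$, where one-sided strictly stable vectors with the prescribed transform exist, while the cases $\alpha=1$ and $\alpha\in(1,2)$, which you defer as ``bookkeeping'', are exactly where this stronger ingredient must be invoked. With that replacement your argument closes and essentially reproduces the proof in \cite{wan:stoev10}.
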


\subsection{Structure of the paper}
\label{sec:structure-paper}

The main aim of this paper is to study the diagonal Minkowski class
generated by a symmetric convex body $K$. In
Section~\ref{sec:affine-k-bodies}, an argument, relating
injectivity and surjectivity of a linear map, establishes the
equivalence of the injectivity of the $K$-transform \eqref{eq:13} and
the fact that the (signed) linear combinations of support functions of
diagonal bodies are dense in the family of support functions of
all symmetric convex bodies.

In Section~\ref{sec:affine-univ-gener} we explore in which cases $T_K$
is injective assuming that $K$ is a generalised zonoid. 
The key argument relies on deriving that the identity $\E
f(\xi\zeta)=\E f(\eta\zeta)$ for integrable symmetric random vectors
$\xi,\eta$, an independent random vector $\zeta$, and all
one-homogeneous even functions $f$ yields that $\E f(|\xi|)=\E
f(|\eta|)$, where $|\xi|$ and $|\eta|$ are vectors composed of the
absolute values of the components of $\xi$ and $\eta$,
respectively. In dimensions two and more it is generally not possible
to take $\zeta$ out of $f(\xi\zeta)$, and the proof goes via letting
$f$ be the product of powers of all arguments.

In order to show that \eqref{eq:12} implies \eqref{eq:1} (hence,
\eqref{eq:2}) for all integrable symmetric random vectors one has to
assume that $K$ does not possess certain symmetries. In particular, it
is necessary that $K$ is not symmetric with respect to any coordinate
hyperplane. The
major result of the paper, Theorem~\ref{thr:asym-uniqueness}, shows
that \eqref{eq:12} implies \eqref{eq:1} if and only if $K$ has
singe-point support sets in directions of all coordinate axes and
satisfies a specific asymmetry condition which is stronger than $K$
not being unconditional.

Section~\ref{sec:transf-surf-area} relates the $K$-transform of
surface area measures to the mixed volumes $V(L,\dots,L,uK)$ involving
the diagonally transformed $K$ and discusses the corresponding
uniqueness problem for the convex body $L$. 

Finally, in Section~\ref{sec:scaled-ell_p-balls} it is shown that
diagonally transformed $\ell_p$-balls are naturally related to
distributions of one-sided strictly stable random vectors. That
section complements the study of the geometric interpretation of
stable laws, initiated in \cite{mo09} for the symmetric setting. This
connection is exploited to show that $\ell_p$-balls (which are not
necessarily generalised zonoids) are unconditionally universal. Some
further properties of convex bodies obtained as diagonal transforms of
$\ell_p$-balls are derived.

The Appendix contains a result that extends the setup of
Section~\ref{sec:affine-k-bodies} to more general families of
transformations.

\section{Injectivity of the $K$-transform}
\label{sec:affine-k-bodies}

Convex bodies that can be approximated in the Hausdorff metric by 
positive linear combinations of a convex body $K$ and its rotations
form a Minkowski class $\mathscr{M}$; its members are called
\emph{$\mathscr{M}$-bodies}, see \cite{sch:sch07}. Furthermore, $M$ is
a \emph{generalised $\mathscr{M}$-body} if
\begin{equation}
  \label{eq:5}
  M+L_1=L_2
\end{equation}
with $L_1,L_2$ being $\mathscr{M}$-bodies.

Denote by $\sK_0$ the family of origin symmetric 
convex bodies. A convex body $K\in\sK_0$ is called \emph{centrally
  universal} if the expansion of its support function into spherical
harmonics contains non-zero harmonics of all even orders, see
\cite{schn74}. It is shown in \cite[Th.~2]{sch:sch07} that this holds
if and only if the family of generalised $\mathscr{M}$-bodies is dense
in $\sK_0$. In particular, if $K$ is a symmetric segment, the
corresponding $K$-bodies are zonoids and generalised zonoids are dense
in $\sK_0$.

In this paper, we consider a similar situation, but instead of
rotations we apply to $K$ diagonal transformations. 
Recall that the rotations/scalings and diagonal transformations are
equivalent if $K$ is the segment $I$ given by \eqref{eq:8}.

\begin{theorem}
  \label{thr:equiv}
  Let $K \in \sK_0$.  The linear combinations of the support functions
  of diagonal bodies generated by $K$ are dense in the family of continuous even
  functions on the unit sphere with the uniform metric (hence, in the
  family of support functions of symmetric convex bodies) if and only
  if the $K$-transform is injective on finite even
  signed measures on $\Sphere$.
\end{theorem}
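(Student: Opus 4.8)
The plan is to read Theorem~\ref{thr:equiv} as an instance of the standard duality between injectivity of a linear operator and density of the range of its adjoint, the point being that the $K$-transform is \emph{self-adjoint} with respect to the natural pairing.

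First I would fix the framework. Let $C_e$ be the Banach space of continuous even functions on $\Sphere$ with the uniform norm; by the Riesz representation theorem its dual is the space $M_e$ of finite even signed measures on $\Sphere$, under the pairing $\langle g,\lambda\rangle=\int_{\Sphere}g\,d\lambda$. Since $K\in\sK_0$ is origin symmetric, for each $v\in\Sphere$ the map $u\mapsto h(vK,u)$ lies in $C_e$, and it is bounded uniformly in $u,v\in\Sphere$ by $\max_{x\in K}|x|$; dominated convergence then shows $T_K$ maps $M_e$ into $C_e$. I would also note at the outset that the linear span of the support functions of diagonal bodies generated by $K$ is exactly $T_K(M_e)$: a signed combination $\sum_j c_jT_K\mu_j$ of such support functions equals $T_K\nu$ with $\nu=\sum_jc_j\mu_j$, and since $h(vK,u)$ is even in $v$ one may replace $\nu$ by its even part; conversely $T_K\nu=T_K\nu_+-T_K\nu_-$ is such a combination. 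Thus the assertion to be proved is that $T_K(M_e)$ is dense in $C_e$ if and only if $T_K$ is injective on $M_e$.

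The crucial observation is the symmetry $h(vK,u)=h(uK,v)$, immediate from $\langle u,vx\rangle=\langle v,ux\rangle$ for $x\in K$. Hence, by Fubini's theorem (all measures finite, the integrand bounded), for $\mu,\lambda\in M_e$
\[
  \int_{\Sphere}(T_K\mu)(u)\,\lambda(du)
  =\int_{\Sphere}\!\!\int_{\Sphere} h(uK,v)\,\mu(dv)\,\lambda(du)
  =\int_{\Sphere}(T_K\lambda)(v)\,\mu(dv),
\]
so that $\langle T_K\mu,\lambda\rangle=\langle T_K\lambda,\mu\rangle$. Now run the Hahn--Banach argument: the subspace $T_K(M_e)$ is dense in $C_e$ if and only if its annihilator in $C_e^{*}=M_e$ is trivial, i.e.\ $\langle T_K\mu,\lambda\rangle=0$ for all $\mu\in M_e$ forces $\lambda=0$. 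By the displayed identity this hypothesis reads $\langle T_K\lambda,\mu\rangle=0$ for all $\mu\in M_e$; taking $\mu=\tfrac12(\delta_v+\delta_{-v})\in M_e$ and using that $T_K\lambda\in C_e$ is even, this is equivalent to $T_K\lambda\equiv 0$. Therefore density of $T_K(M_e)$ in $C_e$ is equivalent to the implication $T_K\lambda=0\Rightarrow\lambda=0$, that is, to injectivity of the $K$-transform on finite even signed measures. Finally, density in $C_e$ clearly implies density in the subcollection of support functions of symmetric convex bodies, which gives the parenthetical claim.

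I do not expect a genuine obstacle here: the argument is soft functional analysis once the self-adjointness is in hand. The only points requiring care are the identification of the linear span of the diagonal bodies' support functions with $T_K(M_e)$ and the boundedness/measurability needed to apply Fubini; the one substantive ingredient is the elementary identity $h(vK,u)=h(uK,v)$, which is what makes the abstract equivalence ``range dense iff adjoint injective'' collapse to the stated self-referential form.
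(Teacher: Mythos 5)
Your proof is correct, and it differs from the paper's in a way worth noting. For the sufficiency direction (injectivity implies density) you and the paper do essentially the same thing: the paper views $T_K$ as an operator on continuous even functions (measures with densities in $C_e$), computes its adjoint on the space of finite even signed measures, shows the adjoint has trivial kernel using the assumed injectivity, and cites the standard fact that trivial kernel of the adjoint gives dense range; your annihilator/Hahn--Banach argument with $T_K$ acting directly on $\sM_e$ is the same mechanism, just without the detour through densities. The genuine difference is in the necessity direction: the paper approximates the support function of the segment $I$ by linear combinations of support functions of diagonal bodies and then invokes the known injectivity of the classical cosine transform on even measures, whereas you obtain necessity for free from the self-adjointness identity $\langle T_K\mu,\lambda\rangle=\langle T_K\lambda,\mu\rangle$, which rests on the elementary symmetry $h(vK,u)=h(uK,v)$ of the Hadamard action. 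Your route is thus fully self-contained and symmetric in the two directions, and it avoids importing the cosine-transform injectivity; the paper's route makes necessity a one-line reduction to a classical result and emphasizes the link to zonoids that runs through the rest of the paper. You were also right to spend care on the two points that make the soft argument legitimate: the identification of the linear span of support functions of diagonal bodies with $T_K(\sM_e)$ (using evenness of $h(\cdot K,u)$ and the Jordan decomposition) and the identification of the dual of $C_e$ with even measures, including the symmetrized-Dirac step showing that vanishing of the pairing against all of $\sM_e$ forces $T_K\lambda\equiv 0$.
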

\begin{proof}
  \textsl{Necessity.}  The support function of the segment $I$ can be
  approximated by linear combinations of support functions of diagonal
  bodies, and it suffices to refer to the injectivity property of
  the cosine transform.
 
  \textsl{Sufficiency.} Denote by $\sM_e$ the family of finite signed
  even measures on the unit sphere and by $C_e$ the family of even 
  continuous functions on the unit sphere. We will prove that the
  image of $T_K$ restricted to measures with densities from $C_e$ is
  dense in $C_e$. Note that $T_K$ is continuous in the uniform metric
  on $C_e$.  Its adjoint operator $T'_K$ acts on $\sM_e$, so that
  \begin{displaymath}
    \int_{\Sphere} f d(T'_K\nu) =\int_{\Sphere} (T_K f) d\nu,
    \quad f\in C_e. 
  \end{displaymath}
  Assume that $T'_K\nu=0$, that is, $\nu$ belongs to the kernel of
  $T'_K$. Then 
  \begin{displaymath}
    \int_{\Sphere}\int_{\Sphere} h(uK,v)f(u)\,du\,\nu(dv)=0
  \end{displaymath}
  for all $f\in C_e$, whence
  \begin{displaymath}
    \int_{\Sphere} h(vK,u)\nu(dv)=0,\quad u\in\Sphere.
  \end{displaymath}
  By the injectivity assumption, $\nu=0$. The triviality of the kernel of
  $T'_K$ yields that the range of $T_K$ is dense in $C_e$, see
  \cite[Th.~III.4.5]{wern00}.
\end{proof}

In analogy with \cite{sch:sch07}, $K$ satisfying one of the equivalent
conditions in Theorem~\ref{thr:equiv} is called \emph{diagonally
  universal} or \emph{D-universal}. 

\begin{proposition}
  \label{prop:sphere}
  A set $K\in\sK_0$ is D-universal if and only if \eqref{eq:12} for
  any two symmetric integrable random vectors $\xi$ and $\eta$ implies
  their zonoid equivalence.
\end{proposition}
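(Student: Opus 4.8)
The plan is to reduce the statement to the definition of injectivity of $T_K$ by passing from random vectors to measures on the sphere. To a symmetric integrable random vector $\xi$ I would associate the finite measure $\mu_\xi$ on $\Sphere$ given by
\begin{displaymath}
  \mu_\xi(A)=\E\big[\|\xi\|\,\one\{\xi/\|\xi\|\in A,\ \xi\neq 0\}\big],
\end{displaymath}
where $\|\cdot\|$ is the Euclidean norm; it is even because $\xi\deq-\xi$, and it is finite since $\mu_\xi(\Sphere)=\E\|\xi\|<\infty$. Using $h(uK,\xi)=h(\xi K,u)$ together with the one-homogeneity of $v\mapsto h(vK,u)$ (which follows from $h(vK,u)=h(K,vu)$), one obtains
\begin{displaymath}
  \E h(uK,\xi)=\E\big[\|\xi\|\,h\big((\xi/\|\xi\|)K,u\big)\one\{\xi\neq 0\}\big]
  =\int_{\Sphere}h(vK,u)\,\mu_\xi(dv)=(T_K\mu_\xi)(u),
\end{displaymath}
the event $\{\xi=0\}$ contributing nothing since $h(\{0\},u)=0$. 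The same computation with $|\langle u,v\rangle|$ in place of $h(vK,u)$ gives $\E|\langle u,\xi\rangle|=(T_I\mu_\xi)(u)$. Hence, for symmetric integrable $\xi,\eta$, equality \eqref{eq:12} is equivalent to $T_K(\mu_\xi-\mu_\eta)=0$, while, by the injectivity of the cosine transform $T_I$ on finite even signed measures, $\xi$ and $\eta$ are zonoid equivalent if and only if $\mu_\xi=\mu_\eta$.

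Granting this dictionary, the forward implication is immediate: if $T_K$ is injective on finite even signed measures (i.e.\ $K$ is D-universal) and \eqref{eq:12} holds, then $\mu_\xi-\mu_\eta$ is an even finite signed measure in the kernel of $T_K$, so $\mu_\xi=\mu_\eta$ and $\xi,\eta$ are zonoid equivalent. For the converse I would first observe that every finite even measure $\rho\neq0$ on $\Sphere$ is of the form $\mu_\zeta$ for a symmetric integrable $\zeta$: take $V$ with the (symmetric) distribution $\rho/\rho(\Sphere)$ and set $\zeta=\rho(\Sphere)\,V$, so that $\|\zeta\|\equiv\rho(\Sphere)$ and $\mu_\zeta=\rho$. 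Now let $\nu$ be any finite even signed measure with $T_K\nu=0$; write the Jordan decomposition $\nu=\nu^+-\nu^-$, where $\nu^+,\nu^-$ are even by uniqueness of the decomposition, fix an even probability measure $\sigma$ on $\Sphere$, and realise the nonzero even measures $\nu^++\sigma$ and $\nu^-+\sigma$ as $\mu_\xi$ and $\mu_\eta$. Then $T_K\mu_\xi-T_K\mu_\eta=T_K\nu=0$, so \eqref{eq:12} holds; by hypothesis $\xi,\eta$ are zonoid equivalent, hence $\mu_\xi=\mu_\eta$, that is $\nu^++\sigma=\nu^-+\sigma$, so $\nu=0$ and $T_K$ is injective.

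The only step that requires genuine care is the first paragraph: verifying that $\mu_\xi$ is a well-defined finite even measure (a pushforward of $\|\xi\|\one\{\xi\neq0\}\,d\P$ under $\omega\mapsto\xi(\omega)/\|\xi(\omega)\|$) and that the homogeneity rewriting of $\E h(uK,\xi)$ and of $\E|\langle u,\xi\rangle|$ is legitimate. Everything afterwards is formal; the auxiliary measure $\sigma$ in the converse is introduced solely to avoid the degenerate case in which one of the Jordan components of $\nu$ vanishes.
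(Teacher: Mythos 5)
Your proof is correct and follows essentially the same route as the paper: the same dictionary $\xi\mapsto\mu_\xi$ between symmetric integrable random vectors and finite even measures, the identity $\E h(uK,\xi)=(T_K\mu_\xi)(u)$, and the injectivity of the cosine transform to pass from zonoid equivalence back to equality of measures, with the Jordan decomposition handling signed measures. The only deviation is your auxiliary measure $\sigma$ added to both Jordan components, a harmless technical variant of the paper's direct construction of random vectors from the (possibly zero) positive and negative parts.
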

\begin{proof}
  \textsl{Necessity.}  A symmetric integrable random vector $\xi$
  yields an even finite measure on $\Sphere$ given by
  \begin{displaymath}
    \mu_\xi(A)=\E(\|\xi\| \, \one_{\xi/\|\xi\|\in A})
  \end{displaymath}
  for all Borel $A\subset\Sphere$, where $\|x\|$ denotes the Euclidean
  norm of $x \in \R^n$. Then
  \begin{displaymath}
    \E h(uK,\xi)=\int_{\Sphere} h(uK,v)\mu_\xi(dv),\quad u\in\R^n.
  \end{displaymath}
  Thus, if $T_K$ is injective, then \eqref{eq:12} yields
  $\mu_\xi=\mu_\eta$, whence \eqref{eq:1} holds and $\xi$ and $\eta$
  are zonoid equivalent.

  \textsl{Sufficiency.} A finite measure $\mu$ on $\Sphere$ yields
  an integrable random vector $\xi=\mu(\Sphere)\xi'$, where $\xi'$ is
  distributed according to the normalised $\mu$. 
  Then 
  \begin{displaymath}
    \int_{\Sphere} h(uK,v)\mu(dv)=\E h(uK,\xi),\quad u\in\R^n.
  \end{displaymath}
  Let $\eta$ be generated by $\nu$ in the same way. If
  $T_K\mu=T_K\nu$, then $\xi$ and $\eta$ are zonoid equivalent, whence
  $\mu=\nu$ by the injectivity of the standard cosine transform. The
  injectivity of $T_K$ on signed even measures follows from the Jordan
  decomposition into their positive and negative parts. 
\end{proof}

A convex body is called \emph{unconditional} if it is symmetric with
respect to all coordinate hyperplanes $\{x=(x_1,\dots,x_n)\in\R^n:\;
x_i=0\}$, $i=1,\dots,n$; the family of unconditional bodies is denoted
by $\sK_s$.

\begin{theorem}
  \label{thr:equiv-pos}
  Let $K \in \sK_s$. The linear combinations of support functions of
  diagonal bodies generated by $K$ are dense in the family of support functions of
  unconditional convex bodies if and only if \eqref{eq:12} for all
  $u\in\R^n$ and any two integrable random vectors $\xi$ and $\eta$ in
  $\R^n$ implies the zonoid equivalence of
  $|\xi|=(|\xi_1|,\dots,|\xi_n|)$ and
  $|\eta|=(|\eta_1|,\dots,|\eta_n|)$.
\end{theorem}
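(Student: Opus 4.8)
The plan is to prove the two stated properties both equivalent to a single intermediate condition. Write (A) for the density statement (the linear combinations of support functions of diagonal bodies generated by $K$ are dense in the support functions of unconditional convex bodies, equivalently in the space $C_s$ of continuous unconditional functions on $\Sphere$), (B) for the implication that \eqref{eq:12} for all integrable $\xi,\eta$ forces the zonoid equivalence of $|\xi|$ and $|\eta|$, and (I) for the injectivity of $T_K$ on the unconditional finite signed measures on $\Sphere$; I shall establish (A)$\Leftrightarrow$(I)$\Leftrightarrow$(B). The argument is the unconditional counterpart of the combination of Theorem~\ref{thr:equiv} and Proposition~\ref{prop:sphere}; its one new ingredient is a reduction to the positive orthant made available by $K\in\sK_s$. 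Since $h(K,y)=h(K,|y|)$ for unconditional $K$, the function $h(vK,u)=h(K,uv)$ depends only on $|u|$ and $|v|$; hence $\E h(uK,\xi)=\E h(uK,|\xi|)$ for every integrable $\xi$, the $K$-transform of a finite signed measure depends only on its image under the coordinatewise absolute value $v\mapsto|v|$ onto $\Splus$, and an unconditional measure is in turn recovered from that image by spreading the mass equally over the $2^n$ orthant copies. Finally $\Splus\cap(-\Splus)=\emptyset$, so a signed measure carried by $\Splus$ vanishes as soon as its even symmetrisation does; together with the injectivity of the cosine transform this identifies ``$|\xi|$ and $|\eta|$ are zonoid equivalent'' with $\mu_{|\xi|}=\mu_{|\eta|}$, where $\mu_{|\xi|}$ is the measure on $\Splus$ attached to $|\xi|$ as in the proof of Proposition~\ref{prop:sphere}.

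With this dictionary in place, (B)$\Leftrightarrow$(I) reduces to translating between random vectors and measures. For (I)$\Rightarrow$(B): given $\xi,\eta$ satisfying \eqref{eq:12}, let $\sigma$ be the difference of the unconditional extensions of $\mu_{|\xi|}$ and $\mu_{|\eta|}$; using $h(K,uv)=h(K,u|v|)$ one gets $(T_K\sigma)(u)=\E h(uK,\xi)-\E h(uK,\eta)=0$, so (I) yields $\sigma=0$, hence $\mu_{|\xi|}=\mu_{|\eta|}$ and the desired zonoid equivalence. For (B)$\Rightarrow$(I): if $\sigma$ is an unconditional signed measure with $T_K\sigma=0$, its Jordan parts $\sigma^\pm$ are again unconditional by uniqueness of that decomposition, so their images on $\Splus$ are realised by integrable $\xi,\eta\in\R_+^n$ exactly as in the proof of Proposition~\ref{prop:sphere}; then $\E h(uK,\xi)-\E h(uK,\eta)=(T_K\sigma)(u)=0$, so (B) makes $|\xi|=\xi$ and $|\eta|=\eta$ zonoid equivalent, whence $\sigma^\pm$ have the same image on $\Splus$, coincide, and $\sigma=0$.

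For (A)$\Leftrightarrow$(I) I would run the argument of Theorem~\ref{thr:equiv} inside $C_s$, noting first that $T_K$ maps $C_s$ into itself since $h(K,uv)$ is unconditional in $u$. If (I) holds and $\nu$ lies in the kernel of the adjoint of $T_K\colon C_s\to C_s$, then $\int\bigl[\int h(K,vu)\,\nu(du)\bigr]f(v)\,dv=0$ for all $f\in C_s$; the bracketed function is continuous and unconditional in $v$, so testing it against itself makes it vanish identically, which reads $T_K\nu=0$ and hence, by (I), $\nu=0$. Triviality of this kernel yields, via \cite[Th.~III.4.5]{wern00}, density of the range of $T_K|_{C_s}$ in $C_s$, and approximating the integrals defining that range by Riemann sums places it inside the closed linear span of $\{h(vK,\cdot):v\in\Sphere\}$; this is (A). Conversely, if (A) holds and $\sigma$ is an unconditional signed measure with $T_K\sigma=0$, then $\sigma$ annihilates every $h(vK,\cdot)$ and hence, by density, all of $C_s$; being unconditional, it is in particular even, so it annihilates every continuous function on $\Sphere$, and therefore $\sigma=0$, which is (I).

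The step demanding the most care is the unconditional bookkeeping behind the first paragraph: that $T_K$ and its adjoint genuinely preserve the unconditional subspaces, that the dual of $C_s$ may be identified with the unconditional signed measures, and that the passage to and from the positive orthant loses no information on unconditional measures while keeping the cosine transform injective there. Once these are settled, the two equivalences are faithful transcriptions of the arguments already carried out for the segment $I$.
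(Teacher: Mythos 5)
Your argument is correct in substance, but it takes a genuinely different route from the paper in one direction. You factor the theorem through the intermediate condition (I), injectivity of $T_K$ on unconditional signed measures, i.e.\ (A)$\Leftrightarrow$(I)$\Leftrightarrow$(B), transplanting Theorem~\ref{thr:equiv} and Proposition~\ref{prop:sphere} to the unconditional setting. For the direction (B)$\Rightarrow$(A) this is essentially what the paper does: it replicates the duality proof of Theorem~\ref{thr:equiv} after restricting everything to $\Splus$, working with $C(\Splus)$ and its dual $M(\Splus)$, which sidesteps the one delicate point in your version, namely the identification of the dual of $C_s$ with the unconditional measures (your Hahn--Banach-plus-sign-averaging bookkeeping does work, but restricting to $\Splus$ is cleaner). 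The real divergence is in the direction (A)$\Rightarrow$(B): the paper does not pass through injectivity at all; it uses the density hypothesis only to approximate the support function of the single unconditional body $B_1$, so that \eqref{eq:12} yields $\E\max_i u_i|\xi_i|=\E\max_i u_i|\eta_i|$ for $u\in\R_+^n$, and then invokes Theorem~\ref{thr:ws} with $\alpha=1$ to convert equality of expected maxima into zonoid equivalence of $|\xi|$ and $|\eta|$. Your route (A)$\Rightarrow$(I)$\Rightarrow$(B) avoids Theorem~\ref{thr:ws} entirely and even yields the stronger conclusion $\mu_{|\xi|}=\mu_{|\eta|}$, but it needs an ingredient you assert without proof when you declare (A) ``equivalently'' to be density in $C_s$: the nontrivial converse, that linear combinations of support functions of unconditional bodies are uniformly dense in $C_s$ (standard, via smoothing by zonal convolution and adding a large multiple of the Euclidean norm, but it should be stated as what your (A)$\Rightarrow$(I) step actually relies on). In short: the paper buys a short necessity proof at the price of importing the probabilistic sum/max association theorem and needing only $B_1$; you stay inside the duality framework and avoid that import, at the price of the support-function denseness fact and the dual-space identification, both of which the paper's restriction to $\Splus$ (and its use of Theorem~\ref{thr:ws}) lets it skip. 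Your remaining reductions (unconditional extension versus restriction to $\Splus$, and injectivity of the cosine transform on measures carried by $\Splus$ via symmetrisation and $\Splus\cap(-\Splus)=\emptyset$) are sound and match what the paper leaves implicit.
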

\begin{proof}
  Since $K$ is unconditional, \eqref{eq:12} holds if and only if 
  \begin{displaymath}
    \E h(uK,|\xi|)=\E h(uK,|\eta|),
  \end{displaymath}
  whence it suffices to assume that $\xi$ and $\eta$ take values in
  $\R_+^n$ and let $u\in\R_+^n$. 

  \textsl{Necessity.}  By assumption, the support function of the
  $\ell_1$-ball $B_1$ can be approximated by linear combinations of
  support functions of diagonal bodies, so that
  \begin{displaymath}
    \E\max(u_1\xi_1,\dots,u_n\xi_n)
    = \E\max(u_1\eta_1,\dots,u_n\eta_n)
  \end{displaymath}
  for all $u=(u_1,\dots,u_n)\in\R_+^n$. By
  Theorem~\ref{thr:ws}, $\xi$ and $\eta$ are zonoid equivalent.
  
  \textsl{Sufficiency.} The proof replicates the proof of sufficiency
  in Theorem~\ref{thr:equiv} by restricting measures and functions
  onto the unit sphere intersected with $\R_+^n$. 
\end{proof}

The property of $K$ (not necessarily belonging to $\sK_s$) formulated
in Theorem~\ref{thr:equiv-pos} can be interpreted as the injectivity
of the $K$-transform over unconditional measures. In this case $K$ is
called \emph{unconditionally D-universal}.

\begin{example}
  \label{ex:max}
  It follows from Theorem~\ref{thr:ws} that $B_1$ is unconditionally
  D-universal. 
  The derivation of similar results for $B_q$ with $q>1$
  requires the methods of Section~\ref{sec:scaled-ell_p-balls}.
\end{example}

The following result shows that a scaled $\ell_\infty$-ball is allowed
as a summand of a D-universal $K$ if the rest is D-universal.

\begin{proposition}
  \label{prop:segments}
  Let $B_\infty=\{x\in\R^n:\; |x_i|\leq 1, i=1,\dots,n\}$ be the
  $\ell_\infty$-ball in $\R^n$, and let 
  $K=K_0+wB_\infty$ for $w\in\R_+^n$ and $K_0\in\sK_0$. Then
  \eqref{eq:12} implies 
  \begin{equation}
    \label{eq:14}
    \E h(uK_0,\xi)=\E h(uK_0,\eta), \quad u\in\R^n.
  \end{equation}
\end{proposition}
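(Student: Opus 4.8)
The plan is to use the additivity of support functions under Minkowski sums together with the very simple form of the support function of a (diagonally transformed) $\ell_\infty$-ball.

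First I would record that the diagonal transformation distributes over the Minkowski sum, $uK=uK_0+(uw)B_\infty$, so that for all $u,x\in\R^n$
\[
  h(uK,x)=h(uK_0,x)+h\bigl((uw)B_\infty,x\bigr)
  =h(uK_0,x)+\sum_{i=1}^n w_i|u_i|\,|x_i|,
\]
where the last equality uses $h(B_\infty,z)=\sum_i|z_i|$ and $w_i\ge 0$. Taking expectations and subtracting the corresponding identity for $\eta$, relation \eqref{eq:12} becomes
\[
  \E h(uK_0,\xi)-\E h(uK_0,\eta)
  =\sum_{i=1}^n w_i|u_i|\bigl(\E|\eta_i|-\E|\xi_i|\bigr),
  \qquad u\in\R^n .
\]
Thus it suffices to show that the right-hand side vanishes identically in $u$, i.e.\ that $w_i\E|\xi_i|=w_i\E|\eta_i|$ for every $i=1,\dots,n$; the indices with $w_i=0$ contribute nothing and can be disregarded.

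Second, for an index $i$ with $w_i>0$ I would substitute $u=e_i$, the $i$-th coordinate vector, into \eqref{eq:12}. Since $K_0$ (hence $K$) is origin symmetric, $h(K_0,\cdot)$ and $h(K,\cdot)$ are even, so $h(e_iK_0,x)=h(K_0,x_ie_i)=|x_i|\,h(K_0,e_i)$ and likewise $h(e_iK,x)=|x_i|\,h(K,e_i)=|x_i|\bigl(h(K_0,e_i)+w_i\bigr)$. Hence \eqref{eq:12} at $u=e_i$ reads $\bigl(h(K_0,e_i)+w_i\bigr)\E|\xi_i|=\bigl(h(K_0,e_i)+w_i\bigr)\E|\eta_i|$, and because $0\in K_0$ forces $h(K_0,e_i)\ge 0$ while $w_i>0$, the common factor is strictly positive and may be cancelled, giving $\E|\xi_i|=\E|\eta_i|$. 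Plugging this back, the right-hand side of the displayed identity is zero for every $u$, which is exactly \eqref{eq:14}.

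There is no serious obstacle here: the argument is a one-line decomposition of the support function plus a test in the coordinate directions. The only point requiring a word of care is the possibly degenerate coordinates, along which $K_0$ and $K$ may have zero width; but then $w_i=0$ automatically removes those coordinates from the discrepancy term, so they need not be analysed.
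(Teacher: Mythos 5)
Your proof is correct and follows essentially the same route as the paper: both decompose the support function additively, test \eqref{eq:12} at the coordinate directions $u=e_i$ to extract $\E|\xi_i|=\E|\eta_i|$ (using that the factor $h(K_0,e_i)+w_i$ is strictly positive), and then subtract the $\ell_\infty$-contribution. The only difference is cosmetic: the paper adds the segments $w_i[-e_i,e_i]$ one at a time by induction, whereas you treat all coordinates simultaneously via $h((uw)B_\infty,x)=\sum_i w_i|u_i||x_i|$, which also handles the degenerate coordinates $w_i=0$ a bit more explicitly.
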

\begin{proof}
  Since $wB_\infty$ equals the sum of $w_i[-e_i,e_i]$, $i=1,\dots,n$, it
  is possible to proceed by induction. Let $K=K_0+[-e_1,e_1]$. If
  $u=e_1$, then 
  \begin{displaymath}
    \E h(uK,\xi)=(h(K_0,e_1)+1)\E |\xi_1|. 
  \end{displaymath}
  Thus, \eqref{eq:12} implies $\E|\xi_1|=\E|\eta_1|$. For $u\in\R^n$, 
  \begin{displaymath}
    \E h(uK,\xi)=\E h(uK_0,\xi)+|u_1|\E |\xi_1|,  
  \end{displaymath}
  whence \eqref{eq:14} holds. 
\end{proof}

Thus, if $K_0$ is (unconditionally) D-universal, then also the convex
body $K_0+vB_\infty$ is (unconditionally) D-universal for all
$v\in\R^n$.

\begin{remark}
  In order to eliminate the dependence of the diagonal transformations
  on the chosen coordinate system, it is possible to incorporate a
  single rotation of $K$, that is, consider the Minkowski class
  generated by $c_1u_1OK+\cdots c_mu_mOK$, where $O$ is an orthogonal
  matrix.\footnote{The authors are grateful to Daniel Hug for
    suggesting this construction.} The universality holds if and only
  if a rotation of $K$ is D-universal. 
\end{remark}

\begin{remark}
  If $p\in[1,\infty)$, the $L_p$-variant of the $T_K$-transform is
  defined by letting
  \begin{displaymath}
    T_K^p\mu =\left(\int_{\Sphere} h(uK,v)^p\mu(dv)\right)^{1/p},
  \end{displaymath}
  equivalently, as $(\E h(uK,\xi)^p)^{1/p}$ for a $p$-integrable
  random vector $\xi$.  Furthermore, it is possible to consider radial
  sums of diagonally transformed convex bodies or add them in the
  Blaschke sense.
\end{remark}

\section{Diagonal universality of generalised zonoids}
\label{sec:affine-univ-gener}

\subsection{Unconditional universality}
\label{sec:uncond-univ}

For $u\neq 0$, let
\begin{displaymath}
  F(K,u)=\{x\in K:\; \langle x,u\rangle=h(K,u)\}
\end{displaymath}
denote the support set of $K$ in direction $u$. 

\begin{lemma}
  \label{le:sphere}
  For a generalised zonoid $K$, the support sets $F(K,e_i)$ in
  directions of the standard basis vectors $e_1,\dots,e_n$ are all
  singletons if and only if the representing measure of $K$ vanishes on
  $S_0=\{v\in\Sphere:\; v_1\cdots v_n=0\}$.
\end{lemma}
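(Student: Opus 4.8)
The claim connects a geometric property of a generalised zonoid (singleton support sets in the coordinate directions) to an analytic property of its representing measure $\nu$ (vanishing on the set $S_0$ where some coordinate of $v$ is zero). My strategy is to compute the support set $F(K,e_i)$ explicitly from the integral representation \eqref{eq:7} and to characterise when it reduces to a point.

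First I would recall the standard fact about support sets of sums (more precisely, integrals) of convex bodies: if $h(K,\cdot) = \int_{\Sphere} h(vI,\cdot)\,\nu(dv)$ for a measure $\nu$ (I will do the positive case first and reduce the signed case to it afterwards, or work with $|\nu|$), then the support set $F(K,u)$ equals the Minkowski integral $\int_{\Sphere} F(vI,u)\,\nu(dv)$ of the support sets of the segments $vI = [-v,v]$. Now $vI$ is the segment with endpoints $\pm v$, so $F(vI,e_i)$ is: the single point $(\sign v_i)\,v$ when $v_i \neq 0$, and the whole segment $vI$ when $v_i = 0$ (since then $e_i$ is orthogonal to $v$). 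Hence $F(K,e_i)$ is a singleton precisely when the contribution from $\{v : v_i = 0\}$ is degenerate, i.e.\ when $\nu$ (resp.\ $|\nu|$) restricted to $\{v_i = 0\}$ is supported on segments that all collapse — but a segment $vI$ with $v_i=0$ is non-degenerate as soon as $v \neq 0$, which is automatic on $\Sphere$. Therefore $F(K,e_i)$ is a singleton if and only if $\nu$ assigns no mass to $\{v \in \Sphere : v_i = 0\}$ (up to the subtlety that with a signed measure, cancellations could in principle also kill the width — this is the point I need to be careful about). Taking the union over $i = 1,\dots,n$ of the exceptional hyperplane sections gives exactly $S_0 = \{v : v_1\cdots v_n = 0\}$, yielding the stated equivalence.

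The main obstacle is the signed-measure case: with $\nu = \nu_+ - \nu_-$, a priori the Minkowski-integral formula for support sets need not hold verbatim, and one could worry that positive and negative contributions on $\{v_i = 0\}$ cancel to produce a singleton even though $\nu$ does not vanish there. To handle this I would argue at the level of support functions: the half-width of $K$ in a direction $w \perp e_i$ is controlled by $h(K,e_i + tw) - h(K,e_i) - t\,\partial_t h$ behaviour, and more concretely I would examine the directional derivative of $h(K,\cdot)$ at $e_i$. Since $h(vI,u) = |\langle u,v\rangle|$, for $v$ with $v_i \neq 0$ the map $u \mapsto |\langle u,v\rangle|$ is differentiable at $e_i$ (as $\langle e_i,v\rangle = v_i \neq 0$), contributing a linear term; for $v$ with $v_i = 0$ it has a genuine kink at $e_i$ in every direction not parallel to... — and because the kink is \emph{convex} in the $\nu_+$ part and \emph{concave} in the $\nu_-$ part, a cancellation argument shows the support set degenerates only if both parts vanish on $\{v_i=0\}$. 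Concretely, $F(K,e_i)$ being a singleton is equivalent to $h(K,\cdot)$ being differentiable at $e_i$, and differentiability of $u\mapsto\int|\langle u,v\rangle|\,\nu(dv)$ at $e_i$ forces $\int_{\{v_i = 0\}} |\langle w,v\rangle|\,|\nu|(dv) = 0$ for all $w$, hence $|\nu|(\{v_i=0\}) = 0$.

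So the proof outline is: (i) state/recall that $F(K,u)$ is a singleton iff $h(K,\cdot)$ is differentiable at $u$; (ii) for $K$ a generalised zonoid, compute the one-sided directional derivatives of $h(K,\cdot)$ at $e_i$ by splitting the integral in \eqref{eq:7} over $\{v_i \neq 0\}$ and $\{v_i = 0\}$, noting that only the second part can destroy differentiability and it does so iff $|\nu|$ charges $\{v_i = 0\}$; (iii) conclude $F(K,e_i)$ singleton for every $i$ iff $|\nu|$, equivalently $\nu$, vanishes on $\bigcup_{i=1}^n \{v_i = 0\} \cap \Sphere = S_0$. I would phrase (ii) using that for fixed $w$, $t \mapsto \int |\langle e_i + tw, v\rangle|\,\nu(dv)$ has left and right derivatives at $t = 0$ differing exactly by $2\int_{\{v_i=0\}} |\langle w,v\rangle|\,\nu(dv)$, and that the absence of a kink in all directions $w$ is what singleton-ness of $F(K,e_i)$ means.
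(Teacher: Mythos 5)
Your plan takes a genuinely different route from the paper: the paper proves the lemma in one line by citing \cite[Lemma~3.5.6]{schn2}, which states precisely that for a generalised zonoid $F(K,u)$ is a singleton if and only if the representing measure vanishes on $S_u=\{v\in\Sphere:\langle v,u\rangle=0\}$, and then observes $S_0=\bigcup_i S_{e_i}$. You instead set out to reprove the needed case $u=e_i$ of that lemma from scratch via differentiability of the support function. The skeleton is sound: $F(K,e_i)$ is a singleton iff $h(K,\cdot)$ is differentiable at $e_i$ (\cite[Cor.~1.7.3]{schn2}), and your computation that the one-sided derivatives of $t\mapsto h(K,e_i+tw)$ at $t=0$ differ by $2\int_{S_{e_i}}|\langle w,v\rangle|\,\nu(dv)$ is correct (dominated convergence handles the part of the integral over $\{v_i\neq 0\}$). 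The converse direction ($\nu$ vanishing on $S_0$ implies all $F(K,e_i)$ are singletons) follows immediately from this identity.

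The gap is exactly at the point you flagged and did not close. Differentiability at $e_i$ gives only $\int_{S_{e_i}}|\langle w,v\rangle|\,\nu(dv)=0$ for all $w$, with the \emph{signed} measure $\nu$; your jump to the same statement with $|\nu|$ is not justified by the ``convex kink from $\nu_+$ versus concave kink from $\nu_-$'' remark, since a convex and a concave kink can cancel to a linear function precisely when the two cosine transforms agree. What the vanishing of the signed kink actually says is that the cosine transforms, computed inside the hyperplane $e_i^\perp$, of the restrictions $\nu_+|_{S_{e_i}}$ and $\nu_-|_{S_{e_i}}$ coincide. To conclude that both restrictions vanish you must (i) take the representing measure even, so that these restrictions are even measures on the sphere $S_{e_i}\cong\SS^{n-2}$ of $e_i^\perp$, (ii) invoke the injectivity of the cosine transform on even signed measures on that lower-dimensional sphere to get $\nu_+|_{S_{e_i}}=\nu_-|_{S_{e_i}}$, and (iii) use mutual singularity of the Jordan parts to deduce both are zero, i.e. $|\nu|(S_{e_i})=0$. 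This injectivity input is the nontrivial ingredient hidden in Schneider's Lemma~3.5.6, which the paper simply cites; once you add it, your argument is complete. (The Minkowski-integral description of $F(K,u)$ in your first paragraph is not needed and indeed does not extend verbatim to signed measures, so it is best dropped in favour of the differentiability argument.)
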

\begin{proof}
  By \cite[Lemma~3.5.6]{schn2},  the
  support set $F(K,u)$ is a singleton for $u\neq 0$ if and only if
  the representing measure vanishes on
  \begin{math}
    S_u=\{x\in\Sphere:\; \langle x,u\rangle =0\}.
  \end{math}
  Now the claim follows from the fact that $S_0 = \cup_i\, S_{e_i}$.
\end{proof}

Recall that $K$ is unconditionally D-universal if \eqref{eq:12}
implies that $(|\xi_1|,\dots,|\xi_n|)$ and $(|\eta_1|,\dots,|\eta_n|)$
are zonoid equivalent.

\begin{theorem}
  \label{thr:main}
  Let $K$ be a generalised zonoid such that all support sets
  $F(K,e_i)$, $i=1,\dots,n$, are singletons.  Then $K$ is
  unconditionally D-universal.
\end{theorem}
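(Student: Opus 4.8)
The plan is to reduce the statement to an identity involving the absolute values $|\xi|,|\eta|$ and then exploit the generalised zonoid structure of $K$ together with Lemma~\ref{le:sphere}. Concretely, suppose $\xi,\eta$ are integrable random vectors in $\R^n$ satisfying \eqref{eq:12}. Writing the representing measure $\nu$ of $K$ and using $h(uK,v)=h(K,uv)=\int_{\Sphere}|\langle uv,w\rangle|\,\nu(dw)=\int_{\Sphere}|\langle u, vw\rangle|\,\nu(dw)$, where $vw$ denotes the Hadamard product, we get
\begin{displaymath}
  \E h(uK,\xi)=\int_{\Sphere}\E|\langle u,\xi w\rangle|\,\nu(dw).
\end{displaymath}
So \eqref{eq:12} says that the signed measure obtained by pushing forward (a size-biased version of) the law of $\xi w$ under $w\sim\nu$ has the same cosine transform as the analogous object for $\eta$. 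By injectivity of the cosine transform on even signed measures, this means that, as identities of integrals against $\nu(dw)$, the zonoids of $\xi w$ and $\eta w$ agree after integration; equivalently $\E f(\xi W)=\E f(\eta W)$ for all one-homogeneous even $f$, where $W$ is an auxiliary vector with "distribution" $\nu$ (independent of $\xi,\eta$), in the sense of signed measures. This is exactly the setup flagged in the introduction: the identity $\E f(\xi\zeta)=\E f(\eta\zeta)$ for all one-homogeneous even $f$.

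The heart of the argument is then the claim announced in Section~\ref{sec:structure-paper}: that $\E f(\xi\zeta)=\E f(\eta\zeta)$ for all one-homogeneous even $f$ forces $\E f(|\xi|)=\E f(|\eta|)$ for all such $f$, provided $\zeta$ avoids the coordinate hyperplanes, which here is guaranteed because Lemma~\ref{le:sphere} tells us the representing measure $\nu$ vanishes on $S_0$. The way I would prove this sub-claim is to take $f$ to be (even extensions of) monomials $f(x)=\prod_i |x_i|^{a_i}$ with $a_i\ge 0$ and $\sum a_i=1$. For such $f$, independence of $\zeta$ gives $\E f(\xi\zeta)=\E\big(\prod_i|\xi_i|^{a_i}\big)\cdot\E\big(\prod_i|\zeta_i|^{a_i}\big)$, and since $\zeta$ charges no coordinate hyperplane the factor $\E\prod_i|\zeta_i|^{a_i}$ is strictly positive (at least finite and nonzero for suitable exponents, using the integrability built into the construction). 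Cancelling it yields $\E\prod_i|\xi_i|^{a_i}=\E\prod_i|\eta_i|^{a_i}$ for all such exponent vectors. By the Stone--Weierstrass theorem these monomials span a dense subalgebra of continuous one-homogeneous even functions on $\R^n$ (equivalently, of $C(\Splus)$ after normalising), so $\E f(|\xi|)=\E f(|\eta|)$ for all one-homogeneous even $f$; specialising to $f(x)=|\langle u,x\rangle|$ with $u\in\R_+^n$ gives precisely the zonoid equivalence of $|\xi|$ and $|\eta|$, i.e. unconditional D-universality.

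Two technical points need care. First, $\nu$ is a signed measure, so "$\zeta$ distributed as $\nu$" is not literally a probability statement; I would handle this by Jordan-decomposing $\nu=\nu^+-\nu^-$ and carrying the two pieces separately through the Fubini manipulations, which is legitimate because $\E|\langle u,\xi\rangle|$-type quantities are finite by integrability of $\xi$ and finiteness of $|\nu|$. Second, one must check the positivity/finiteness of $\E\prod_i|\zeta_i|^{a_i}$: restricting to $\zeta$ on the unit sphere makes $|\zeta_i|\le 1$ so finiteness is automatic, and $\nu$ not charging $S_0$ ensures the integral is nonzero for the exponent vector $a_i=1/n$ (all equal), which is enough to run a density argument on a slightly smaller but still dense subalgebra. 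The main obstacle, and the step I expect to absorb most of the work, is exactly this interchange-and-cancellation in the signed-measure setting combined with justifying that the positive monomials separate points well enough to recover full zonoid equivalence rather than merely equality of some moments; everything else is bookkeeping built on the injectivity of the classical cosine transform and on Lemma~\ref{le:sphere}.
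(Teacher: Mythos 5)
Your opening reduction is exactly the paper's first step: writing $\E h(uK,\xi)=\int_{\Sphere}\E|\langle u,\xi v\rangle|\,\nu(dv)$, using the Jordan decomposition of $\nu$ together with the cosine-transform injectivity (Theorem~\ref{thr:m:s:s}) to get $\int_{\Sphere}\E f(\xi v)\,\nu(dv)=\int_{\Sphere}\E f(\eta v)\,\nu(dv)$ for all one-homogeneous even $f$, and then choosing multiplicative power functions so that the $\nu$-integral factors out of the expectation. The first genuine gap is in the cancellation step. Since $\nu$ is a \emph{signed} measure, Lemma~\ref{le:sphere} (i.e.\ $|\nu|(S_0)=0$) does not make $\int_{\Sphere}[v]^\alpha\,\nu(dv)$ nonzero; in particular there is no reason it should be nonzero at $\alpha=(1/n,\dots,1/n)$, because the positive and negative parts of $\nu$ can cancel exactly for interior exponents. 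The paper avoids this by evaluating the factor at the vertices $\alpha=e_i$, where it equals $h(K,e_i)>0$ (this is where the geometry of $K$ actually enters), and then invoking continuity in $\alpha$; as a consequence the moment identity $\E[\xi]^\alpha=\E[\eta]^\alpha$ is only obtained for $\alpha$ in relatively open neighbourhoods of the vertices of the simplex, not on the whole simplex as your argument assumes.

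The second, more serious gap is the passage from these monomial identities to the zonoid equivalence of $|\xi|$ and $|\eta|$. Stone--Weierstrass does not apply: the one-homogeneous monomials $[x]^\alpha$ with $\alpha$ in the unit simplex, restricted to $\Splus$, do not form an algebra (a product of two of them is two-homogeneous), so density of their linear span in $C(\Splus)$ is unproven -- and even if some density held, your identity is only available for the exponents surviving the cancellation step. Moreover, $\xi$ and $\eta$ may charge coordinate hyperplanes, and monomials without the indicators $\one_{x\in A_E}$ mix the contributions of the different coordinate faces; separating these contributions is precisely the role of the paper's functions $[x]^\alpha\one_{x\in A_E}$ (still one-homogeneous and even, so admissible as $f$) and of Lemma~\ref{lemma:powers-all}, whose analytic engine is uniqueness of Laplace transforms on an open set (Lemma~\ref{lemma:kabluchko}), not an approximation theorem. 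Your route could in principle be repaired -- recover the measure on the open orthant from exponents with all $a_i>0$ near a vertex via Lemma~\ref{lemma:kabluchko}, then induct downwards over faces, subtracting the already identified higher-dimensional contributions -- but that amounts to re-proving Lemma~\ref{lemma:powers-all}, which your proposal replaces by an invalid density claim. As written, the proof is incomplete at both the cancellation and the final identification step.
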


The proof of Theorem~\ref{thr:main} is based on the following two
lemmas. For $x\in\R^n$ and $\alpha\in\R^n$, write
\begin{displaymath}
  [x]^\alpha=\prod_{i=1}^n |x_i|^{\alpha_i},
\end{displaymath}
and 
\begin{align*}
  |x|&=(|x_1|,\dots,|x_n|),\\
  \sign(x)&=(\sign(x_1),\dots,\sign(x_n))
\end{align*}
for the componentwise absolute values and signs of $x\in\R^n$. We use
the convention $0^0=1$. For $E\subseteq\{1,\dots,n\}$, denote
\begin{displaymath}
  A_E=\{x \in\R^n:\; x_i\neq 0, i\in E,x_j=0, j\notin E\},
\end{displaymath}
and let 
\begin{equation}
  \label{eq:19}
  \Delta_E=\Big\{\alpha\in\R_+^n:\; \sum \alpha_i=1, \alpha_j=0,j\notin E\Big\}
\end{equation}
be the unit simplex in the linear subspace generated by $A_E$. 
Let $A_\emptyset=\{0\}$ and $\Delta_\emptyset=\{0\}$.

\begin{lemma}
  \label{lemma:kabluchko}
  Let $\xi$ and $\eta$ be two random vectors in $(0,\infty)^n$. If 
  \begin{displaymath}
    \E [\xi]^\alpha=\E [\eta]^\alpha < \infty
  \end{displaymath}
  for all $\alpha$ from an open set in $\R^n$, then $\xi$ and $\eta$ are identically
  distributed.
\end{lemma}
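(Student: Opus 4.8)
The plan is to pass to logarithmic coordinates and recognise the hypothesis as equality of a (multivariate) Laplace transform on an open set. Concretely, I would write $\xi = \RMe^{Y}$ and $\eta = \RMe^{Z}$ componentwise, i.e.\ $Y = (\log\xi_1,\dots,\log\xi_n)$ and $Z$ defined analogously; these are well-defined $\R^n$-valued random vectors since $\xi,\eta$ take values in $(0,\infty)^n$. Then for $\alpha\in\R^n$,
\begin{displaymath}
  \E[\xi]^\alpha = \E\prod_{i=1}^n \xi_i^{\alpha_i}
  = \E\exp\Big(\sum_{i=1}^n \alpha_i Y_i\Big)
  = \E\RMe^{\langle\alpha,Y\rangle},
\end{displaymath}
which is precisely the moment generating function $M_Y(\alpha)$ of $Y$ evaluated at $\alpha$; similarly $\E[\eta]^\alpha = M_Z(\alpha)$. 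The hypothesis therefore says $M_Y = M_Z$ and both are finite on a common open set $U\subseteq\R^n$.

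The second step is to invoke the standard fact that a random vector in $\R^n$ whose moment generating function is finite and agrees with that of another random vector on a non-empty open set has the same distribution as that vector. One clean route: fix an interior point $\alpha_0\in U$ and a small ball $B(\alpha_0,\delta)\subseteq U$; the function $z\mapsto \E\RMe^{\langle z,Y\rangle}$ is analytic on the tube $\{z\in\mathbb{C}^n:\ \mathrm{Re}\,z\in B(\alpha_0,\delta)\}$ (differentiation under the integral sign is justified by domination using finiteness of $M_Y$ on a slightly larger real ball), and likewise for $Z$; since $M_Y$ and $M_Z$ agree on the real open ball they agree on the whole tube by the identity theorem for several complex variables, in particular on the purely imaginary translate $\alpha_0 + \imath\R^n$. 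Evaluating there gives $\RMe^{\langle\alpha_0,\cdot\rangle}$ times the characteristic functions of $Y$ and $Z$; cancelling the nowhere-zero factor $\RMe^{\langle\alpha_0,y\rangle}$ (absorbing it into a change of measure, or simply noting it is a fixed analytic factor) yields that the characteristic functions of $Y$ and $Z$ coincide, hence $Y\deq Z$ by uniqueness of characteristic functions. Applying the componentwise bijection $y\mapsto\RMe^{y}$ gives $\xi\deq\eta$.

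I expect the only real point requiring care is the analyticity/identity-theorem step in $\mathbb{C}^n$ — one must make sure the domination needed for holomorphic differentiation under the expectation is in force on a neighbourhood of the chosen point, which is exactly what finiteness of $\E[\xi]^\alpha$ on an \emph{open} set delivers (finiteness at a single $\alpha$ would not suffice). The reduction to this analytic statement is otherwise routine, and one could alternatively cite a textbook result on the injectivity of the Laplace transform / moment generating function on a multidimensional open domain and skip the complex-analytic details entirely.
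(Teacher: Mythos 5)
Your proposal is correct and follows essentially the same route as the paper: pass to componentwise logarithms so that $\E[\xi]^\alpha$ becomes the Laplace transform (moment generating function) of $\log\xi$, and conclude from its agreement and finiteness on an open set. The only difference is that the paper simply cites \cite[Lemma~7]{kab:sch:haan09} for this uniqueness step, whereas you prove it directly via analytic continuation to a complex tube and uniqueness of characteristic functions of the exponentially tilted laws, which is a standard and valid substitute.
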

\begin{proof}
  By passing to componentwise logarithms $\tilde\xi=\log\xi$ and
  $\tilde\eta=\log\eta$, we see that the Laplace transforms of
  $\tilde\xi$ and $\tilde\eta$ agree on an open set, and the result
  follows from \cite[Lemma~7]{kab:sch:haan09}.
\end{proof}

\begin{lemma}
  \label{lemma:powers-all}
  Let $\xi$ and $\eta$ be integrable random vectors in $\R_+^n$. Then
  $\xi$ and $\eta$ are zonoid equivalent if and only if
  \begin{equation}
    \label{eq:11}
    \E \big([\xi]^\alpha\one_{\xi\in A_E}\big)
    =\E \big([\eta]^\alpha\one_{\eta\in A_E}\big)
  \end{equation}
  for all non-empty $E\subseteq\{1,\dots,n\}$ and $\alpha$ from a
  relatively open subset of the unit simplex $\Delta_E$.
\end{lemma}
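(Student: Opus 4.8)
The strategy is to reduce zonoid equivalence of $\xi,\eta\in\R_+^n$ to a moment-matching statement on each coordinate subspace, and then invoke Lemma~\ref{lemma:kabluchko}. Recall that $\xi$ and $\eta$ are zonoid equivalent iff $\E\langle u,\xi\rangle = \E\langle u,\eta\rangle$ for all $u\in\R_+^n$ (since the vectors live in $\R_+^n$, the absolute value is superfluous for such $u$, and by homogeneity and the support-function characterisation this is the same as the equality of the zonoids). Decompose $\R_+^n = \bigsqcup_{E\subseteq\{1,\dots,n\}} A_E$ and write $\xi = \sum_E \xi\one_{\xi\in A_E}$; the zonoid of $\xi$ is the Minkowski sum over $E$ of the zonoids of the restrictions $\xi\one_{\xi\in A_E}$ (this is immediate from additivity of expectation in the formula $\E\langle u,\xi\rangle = \sum_E \E(\langle u,\xi\rangle\one_{\xi\in A_E})$). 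So the first step is to argue that zonoid equivalence of $\xi$ and $\eta$ is equivalent to, for every $E$, the measures $\mu_E$ and $\nu_E$ on $\SS^{|E|-1}_{++}$ (the positive part of the sphere in the $E$-coordinate subspace) induced by $\xi\one_{\xi\in A_E}$ and $\eta\one_{\eta\in A_E}$ being zonoid equivalent there. That the global equality splits into the per-$E$ equalities follows because the zonoid of $\xi\one_{\xi\in A_E}$ is supported on the $E$-subspace, so one can test against $u$ supported on $E$, and then pass between the conditional distributions on the relatively open positive orthants; some care is needed because different $E$'s overlap on lower-dimensional faces, but taking $u$ in the relative interior of the $E$-orthant isolates the $E$-term.

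The second step handles a fixed non-empty $E$. After restricting to the coordinates in $E$ we may assume without loss of generality that $E=\{1,\dots,n\}$, i.e.\ $\xi,\eta$ are supported in $(0,\infty)^n$ (and integrable). Now the claim is: the equality of the first moments $\E\langle u,\xi\rangle = \E\langle u,\eta\rangle$ for $u\in\R_+^n$ is equivalent to $\E[\xi]^\alpha = \E[\eta]^\alpha$ for $\alpha$ in a relatively open subset of $\Delta_E$ (the unit simplex $\sum\alpha_i=1$, $\alpha_i\ge 0$). For the forward direction: zonoid equivalence gives, by Theorem~\ref{thr:m:s:s}, that $\E f(\xi)=\E f(\eta)$ for all one-homogeneous even measurable $f\ge 0$; the function $x\mapsto [x]^\alpha = \prod |x_i|^{\alpha_i}$ with $\sum\alpha_i=1$ is exactly such a function (one-homogeneous, even, nonnegative), so $\E[\xi]^\alpha=\E[\eta]^\alpha$ for every $\alpha\in\Delta_E$ — in fact for all $\alpha\in\Delta_E$, not merely a relatively open subset, and these moments are finite since $[x]^\alpha\le \langle \mathbf{1},x\rangle$ by AM–GM for $x\in\R_+^n$. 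For the reverse direction: suppose $\E[\xi]^\alpha=\E[\eta]^\alpha<\infty$ for $\alpha$ in a relatively open subset $U$ of $\Delta_E$. Reparametrise: for $\alpha$ in a full-dimensional open subset of $\R^n$ near such a point, write $\alpha = \beta + t\mathbf{1}$ where $\beta$ ranges over an open subset of the hyperplane $\sum\beta_i=0$ and $t$ near $1/n$; then $[x]^\alpha = [x]^\beta \cdot (\,[x]^{\mathbf 1}\,)^{t}$, and since $[x]^{\mathbf 1}=\prod x_i$, multiplying $\xi$ by... — more cleanly, apply Lemma~\ref{lemma:kabluchko} directly: the hypothesis gives $\E[\xi]^\alpha=\E[\eta]^\alpha$ on a relatively open subset of the simplex, which is an $(n-1)$-dimensional object, so it is not literally an open set in $\R^n$, hence one needs to first extend. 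The extension trick: fix $\alpha^0\in U$; for $\alpha$ in a genuinely open neighbourhood of $\alpha^0$ in $\R^n$, the moments $\E[\xi]^\alpha$ and $\E[\eta]^\alpha$ need not be finite or equal a priori, so instead one should observe that zonoid equivalence of $\xi,\eta$ is unaffected by replacing $\xi$ with $c\xi$, and more usefully, note that $\E[\xi]^\alpha$ for $\alpha$ on the hyperplane together with finiteness is enough once we also exploit that $\xi$ is integrable: consider the auxiliary vector whose coordinates are $\xi_i$ and note that for $\alpha$ with $\sum\alpha_i<1$ and $\alpha_i\ge 0$ one has $[x]^\alpha \le 1 + \langle\mathbf 1,x\rangle$, so those moments are finite too, and $\E[\xi]^\alpha$ for such $\alpha$ is determined by rescaling. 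The honest route: from $\E[\xi]^\alpha = \E[\eta]^\alpha$ on an open subset of the simplex, deduce it on an open subset of the sub-simplex region $\{\alpha\ge 0,\ \sum\alpha_i<1+\epsilon\}$ by a homogeneity/continuity argument, obtaining an open set in $\R^n$, then apply Lemma~\ref{lemma:kabluchko} to conclude $\xi\deq\eta$ on $(0,\infty)^n$; then trivially $\xi,\eta$ are zonoid equivalent.

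I expect the main obstacle to be precisely this dimension-count issue in the reverse direction: the simplex $\Delta_E$ is $(|E|-1)$-dimensional, but Lemma~\ref{lemma:kabluchko} needs an open set in $\R^{|E|}$. The clean fix is to observe that zonoid equivalence of $\xi$ and $\eta$ on $A_E$ is equivalent to zonoid equivalence of the "projectivised" vectors $\xi/\langle\mathbf 1,\xi\rangle$ weighted by $\langle\mathbf 1,\xi\rangle$, i.e.\ one really only has $(|E|-1)$ degrees of freedom available anyway, so matching $[\cdot]^\alpha$-moments over the $(|E|-1)$-dimensional simplex is the right count after all — one applies Lemma~\ref{lemma:kabluchko} not to $\xi$ itself but to its image under the logarithmic chart composed with projection onto the hyperplane $\sum y_i = 0$, i.e.\ to $\big(\log\xi_i - \tfrac1n\sum_j\log\xi_j\big)_i$ together with the weight $\langle \mathbf 1,\xi\rangle$, whose joint distribution determines and is determined by the zonoid-equivalence class. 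Working out this change of variables carefully, and checking that an open subset of $\Delta_E$ corresponds to an open subset in the $(|E|-1)$-dimensional parameter space of the chart, is the technical heart; once that is in place, Lemma~\ref{lemma:kabluchko} (applied in dimension $|E|-1$, or its one-dimensional-weight-augmented version) finishes it, and the per-$E$ equalities are recombined via Minkowski additivity of zonoids to give the global statement.
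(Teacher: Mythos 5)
Your sketch for the sufficiency direction heads towards the paper's actual argument (reduce to an $(|E|-1)$-dimensional log-ratio chart and invoke Lemma~\ref{lemma:kabluchko}), but the step you defer as ``the technical heart'' is exactly the missing content, and the bridge you assert in its place is false. You claim that the joint law of the projectivised vector together with the weight $\langle \mathbf 1,\xi\rangle$ ``determines and is determined by'' the zonoid-equivalence class; it does not: the restricted zonoid only determines a size-biased (length-weighted) directional measure, never the joint law of direction and length. What closes the argument in the paper is a change of measure plus an untilting identity: for $\beta$ interior to your open set, put $d\Q\propto[\xi]^\beta\one_{\xi\in A_E}\,d\P$ and $d\Q^*\propto[\eta]^\beta\one_{\eta\in A_E}\,d\P$; hypothesis \eqref{eq:11} for $\alpha$ near $\beta$ says precisely that the multiplicative moments of the ratios $\xi_i/\xi_k$, $i\in E\setminus\{k\}$, under $\Q$ match those of $\eta_i/\eta_k$ under $\Q^*$ for exponents in an open subset of $\R^{|E|-1}$, so Lemma~\ref{lemma:kabluchko} identifies these ratio distributions; then the identity $|\langle u,\xi\rangle|=[\xi]^\beta\,|\langle u,\xi_k^{-1}\xi\rangle|\,[\xi_k^{-1}\xi]^{-\beta}$ on $A_E$ (valid because $\sum_i\beta_i=1$) expresses $\E\big(|\langle u,\xi\rangle|\one_{\xi\in A_E}\big)$ as $c$ times a $\Q$-expectation of a function of the ratios alone, which is the step that converts equality of the tilted ratio laws into equality of the restricted zonoids. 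Without some such reconstruction the matched moments tell you nothing about the zonoid; you also leave untreated the degenerate cases $c=0$, $E$ a singleton, and $\P(\xi\in A_E)=0$, which the paper disposes of explicitly.

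Two further steps as written are incorrect, though repairable. First, in your step 1, testing with $u$ in the relative interior of the $E$-orthant does not isolate the $A_E$-contribution: for such $u$ every event $\{\xi\in A_{E'}\}$ with $E'\cap E\neq\emptyset$ contributes to $\E|\langle u,\xi\rangle|$. The splitting you want (global zonoid equivalence implies zonoid equivalence of the restrictions to each $A_E$) is true, but only via Theorem~\ref{thr:m:s:s} applied to the one-homogeneous even function $f(x)=|\langle u,x\rangle|\one_{x\in A_E}$; the same remark fixes your necessity argument, where the indicator must be kept, i.e.\ one applies Theorem~\ref{thr:m:s:s} to $f(x)=[x]^\alpha\one_{x\in A_E}$, exactly as the paper does. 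Second, the ``honest route'' you first propose --- extending the moment equality from $\Delta_E$ to an open set of $\alpha$ in $\R^n$ and concluding $\xi\deq\eta$ --- cannot work: equality on the simplex does not propagate off the hyperplane $\sum_i\alpha_i=1$ by any homogeneity argument, and the intended conclusion $\xi\deq\eta$ is false in general, since zonoid-equivalent vectors need not be identically distributed (which is why the lemma stops at zonoid equivalence). You abandon that route, but the replacement sketch still lacks the decisive untilting step described above.
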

\begin{proof}
  \textsl{Necessity} follows from the fact that, for all
  $\alpha\in\Delta_E$, the expectations in \eqref{eq:11} are taken of
  a one-homogenous even function of $\xi$ and $\eta$, so that
  Theorem~\ref{thr:m:s:s} applies.

  \textsl{Sufficiency.} Fix $E\subseteq\{1,\dots,n\}$ of cardinality
  at least $2$, and assume that \eqref{eq:11} holds for $\alpha$ from
  a relatively open neighbourhood of some $\beta\in\Delta_E$. Then
  \begin{equation}
    \label{eq:10}
    c=\E \big([\xi]^\beta\one_{\xi\in A_E}\big)
    =\E \big([\eta]^\beta\one_{\eta\in A_E}\big).
  \end{equation}
  Assume that $c>0$ and define probability measures $\Q$ and $\Q^*$
  with densities
  \begin{align*}
    \frac{d\Q}{d\P}=\frac{1}{c}[\xi]^\beta\one_{\xi\in A_E},\qquad
    \frac{d\Q^*}{d\P}=\frac{1}{c} [\eta]^\beta\one_{\eta\in A_E},
  \end{align*}
  and let $\E_\Q$ and $\E_{\Q^*}$ denote the expectations with respect
  to $\Q$ and $\Q^*$, respectively. 

  Then \eqref{eq:11} implies
  \begin{displaymath}
    \E_\Q \prod_{i\in E} \xi_i^{\gamma_i}
    =\E_{\Q^*} \prod_{i\in E} \eta_i^{\gamma_i}
  \end{displaymath}
  for all $\gamma$ from a neighbourhood of the origin and such that $\sum
  \gamma_i=0$ and $\gamma_j=0$ for $j\notin E$. Fix any $k\in E$ and
  notice that $\gamma_k=-\sum_{i\neq k}\gamma_i$. Therefore, 
  \begin{displaymath}
    \E_\Q \prod_{i\in E, i\neq k}
      \left(\frac{\xi_i}{\xi_k}\right)^{\gamma_i}
    =\E_{\Q^*} \prod_{i\in E, i\neq k}
      \left(\frac{\eta_i}{\eta_k}\right)^{\gamma_i}
  \end{displaymath}
  for $\gamma_i$, $i\neq k$, from an open set in the subspace of
  $\R^n$ generated by the basis vectors $e_i$, $i\in E\setminus\{k\}$. 
  
  By Lemma~\ref{lemma:kabluchko} applied to this subspace of $\R^n$,
  the distributions of $\xi_i/\xi_k$, $i\in E\setminus\{k\}$, under
  $\Q$ and $\eta_i/\eta_k$, $i\in E\setminus\{k\}$, under $\Q^*$
  coincide. These vectors can be extended by placing $1$ at the
  component number $k$ and $0$ at all components outside $E$. Therefore, 
  \begin{align*}
    \E \big(|\langle u,\xi\rangle|\one_{\xi\in A_E}\big)
    &=c\,\E_\Q \big(|\langle u,\xi_k^{-1}\xi\rangle|
      \,[\xi_k^{-1}\xi]^{-\beta}\one_{\xi \in A_E}\big)\\
    &=c\,\E_{\Q^*} \big(|\langle u,\eta_k^{-1}\eta\rangle|
      \,[\eta_k^{-1}\eta]^{-\beta}\one_{\eta \in A_E}\big)\\
    &=\E \big(|\langle u,\eta\rangle|\one_{\eta \in A_E}\big).
  \end{align*}
  This also holds if $c=0$ in \eqref{eq:10}, and for $E=\emptyset$.
  If $E$ is a singleton, then this holds by \eqref{eq:11}.  Taking the
  sum over all $E\subseteq\{1,\dots,n\}$ shows that $\xi$ and $\eta$
  are zonoid equivalent.
\end{proof}

\begin{proof}[Proof of Theorem~\ref{thr:main}]
  It follows from \eqref{eq:12} and \eqref{eq:7} that
  \begin{displaymath}
    \int_{\Sphere} \E|\langle u,\xi v\rangle|\nu(dv)
    =\int_{\Sphere} \E|\langle u,\eta v\rangle|\nu(dv), \quad u\in\R^n. 
  \end{displaymath}
  By splitting $\nu$ into the positive and negative parts and
  referring to the equivalence of \eqref{eq:1} and \eqref{eq:2} (see
  \cite[Th.~2]{mol:sch:stuc14}), it is possible to conclude that 
  \begin{displaymath}
    \int_{\Sphere} \E f(\xi v) \nu(dv)
    =\int_{\Sphere} \E f(\eta v) \nu(dv)
  \end{displaymath}
  for all one-homogeneous even functions $f$. 

  Taking $f(x)=[x]^\alpha  \one_{x\in A_E}$ for non-empty
  $E\subseteq\{1,\dots,n\}$ yields that 
  \begin{displaymath}
    \int_{\Sphere} \E \big([\xi v]^\alpha \one_{\xi v\in A_E}\big)\nu(dv)
    = \int_{\Sphere}\E \big([\eta v]^\alpha \one_{\eta v\in
      A_E}\big)\nu(dv), 
    \quad \alpha\in\Delta_E. 
  \end{displaymath}
  Taking into account that $\nu$ vanishes on the set $\{x\in\Sphere:\;
  x_1\cdots x_n=0\}$ by Lemma~\ref{le:sphere}, we may assume that
  none of the components of $v$ from the integration domain vanishes,
  whence 
  \begin{displaymath}
    \int_{\Sphere} [v]^\alpha\nu(dv) \, \E \big([\xi]^\alpha \one_{\xi\in A_E}\big)
    = \int_{\Sphere}[v]^\alpha\nu(dv) \, \E \big([\eta]^\alpha \one_{\eta\in A_E}\big), 
    \quad \alpha\in\Delta_E. 
  \end{displaymath}
  The integral of $[v]^\alpha$ does not vanish for $\alpha$
  being any basis vector. Indeed, if it vanishes for $\alpha=e_i$, then 
  \begin{displaymath}
    0=\int_{\Sphere} [v]^{e_i}\nu(dv)
    =\int_{\Sphere} |\langle v,e_i\rangle|\nu(dv)=h(K,e_i),
  \end{displaymath}
  whence $K$ is a subset of a coordinate hyperplane.

  Hence,
  \begin{displaymath}
    \E \big([\xi]^\alpha \one_{\xi\in A_E}\big)
    =\E \big([\eta]^\alpha \one_{\eta\in A_E}\big)
  \end{displaymath}
  for all $\alpha$ from a neighbourhood in $\Delta_E$ of any basis
  vector $e_k$ with $k\in E$.  By Lemma~\ref{lemma:powers-all}, $|\xi|$
  and $|\eta|$ are zonoid equivalent.
\end{proof}  

\begin{corollary}
  \label{cor:zon-uncond}
 Each symmetric convex body in $\R^2$ is either a diagonal transform of $B_\infty$ or it is unconditionally D-universal.
\end{corollary}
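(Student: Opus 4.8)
The plan is to reduce the statement to Theorem~\ref{thr:main} and Proposition~\ref{prop:segments}, using the classical fact that in the plane every origin symmetric convex body is a zonoid. Indeed, every centrally symmetric convex polygon in $\R^2$ is a Minkowski sum of segments, and an arbitrary $K\in\sK_0$ in $\R^2$ is a Hausdorff limit of such polygons, so $K$ is a zonoid; in particular $K$ is a generalised zonoid and admits a non-negative even representing measure $\nu$ on $\SS^1$ with $h(K,u)=\int_{\SS^1}|\langle u,v\rangle|\,\nu(dv)$, see \cite[Sec.~3.5]{schn2}.

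Next I would separate the mass of $\nu$ sitting on the coordinate directions. Put $a=\nu(\{e_1,-e_1\})$, $b=\nu(\{e_2,-e_2\})$, let $w=(a,b)\in\R_+^2$, and let $\nu_0$ be the restriction of $\nu$ to $\SS^1\setminus\{\pm e_1,\pm e_2\}$. Since $|\langle u,e_i\rangle|=|u_i|$ and $a|u_1|+b|u_2|$ is the support function of the box $wB_\infty=[-a,a]\times[-b,b]$, this yields the decomposition $K=K_0+wB_\infty$, where $K_0$ is the zonoid (hence generalised zonoid) with non-negative representing measure $\nu_0$.

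The argument then splits according to whether $\nu_0$ vanishes. If $\nu_0=0$, then $K_0=\{0\}$ and $K=wB_\infty$ is a diagonal transform of $B_\infty$, giving the first alternative. If $\nu_0\neq 0$, then $K_0$ is a generalised zonoid which is not a single point, and by construction its representing measure vanishes on $S_0=\{v\in\SS^1:\;v_1v_2=0\}=\{\pm e_1,\pm e_2\}$; by Lemma~\ref{le:sphere} the support sets $F(K_0,e_1)$ and $F(K_0,e_2)$ are then singletons. Hence Theorem~\ref{thr:main} applies and $K_0$ is unconditionally D-universal, and by Proposition~\ref{prop:segments} together with the remark following it, adding the summand $wB_\infty$ with $w\in\R_+^2$ preserves unconditional D-universality, so $K=K_0+wB_\infty$ is unconditionally D-universal, giving the second alternative.

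The delicate points are purely of a bookkeeping nature: one must invoke the planar characterization of zonoids so that $\nu$ (hence $\nu_0$) can be taken non-negative and $K_0$ is genuinely a generalised zonoid, and one must be careful to route the degenerate case $K_0=\{0\}$ into the $wB_\infty$ alternative, since Theorem~\ref{thr:main} would formally apply to $K_0=\{0\}$ as well even though a single point is obviously not unconditionally D-universal. Everything else is a direct application of the quoted results.
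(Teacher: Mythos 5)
Your proof is correct and follows essentially the same route as the paper: the planar fact that every symmetric convex body is a zonoid, the splitting of the representing measure into its mass on $\{\pm e_1,\pm e_2\}$ versus the rest to get $K=K_0+wB_\infty$, and then Lemma~\ref{le:sphere}, Theorem~\ref{thr:main} and Proposition~\ref{prop:segments} when $K_0$ is non-trivial, with the degenerate case giving a diagonal transform of $B_\infty$. Your explicit handling of the case $\nu_0=0$ only makes precise what the paper's proof covers with the phrase ``if $K_0$ is non-trivial.''
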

\begin{proof}
  Recall that in $\R^2$ each symmetric convex body $K$ is a zonoid, say with representing measure $\nu$ on~$S^1$. Restricting $\nu$ to $S^1 \setminus \{ \pm e_1, \pm e_2 \}$ and to $\{ \pm e_1, \pm e_2 \}$ yields a decomposition $K = K_0 + w B_\infty$ where $K_0$ is a zonoid with support sets $F(K_0,e_i)$, $i=1,2$, being singletons and $w \in \R^2$. If $K_0$ is non-trivial, then it is unconditionally D-universal by Theorem~\ref{thr:main}, and thus $K$ is unconditionally D-universal by Proposititon~\ref{prop:segments}.
\end{proof}

Notice that the generalisation of Corollary~\ref{cor:zon-uncond} to dimension $n \geq 3$ fails, even for zonoids, because the restriction of $\nu$ to $\Sphere$ and $\Sphere \setminus S_0$ does not lead to a decomposition of the form considered in Proposition~\ref{prop:segments}.

\subsection{Diagonal universality} 
\label{sec:d-universality}

Theorem~\ref{thr:main} yields that all generalised zonoids with
single-point support sets in directions of all basis vectors are
\emph{unconditionally} D-universal. To obtain the D-universality
property, one has to be able to recover the distribution of signs of
the components of $\xi$ from $\E h(uK,\xi)$.
For $J\subset\{1,\dots,n\}$ and $\alpha\in\R_+^n$, introduce the
function
\begin{equation}
  \label{eq:17}
  f_{\alpha,J}(x)
  =[x]^\alpha \prod_{i\in J} \sign(x_i), \quad x\in\R^n. 
\end{equation}
Denoting by $t^{\langle\beta\rangle}=|t|^\beta\sign (t)$ the signed
$\beta$-power of $t\in\R$ for $\beta\geq0$, it is possible to write
\begin{displaymath}
  f_{\alpha,J}(x)=\prod_{i\in J} x_i^{\langle\alpha_i\rangle}
  \prod_{i\notin J} |x_i|^{\alpha_i}. 
\end{displaymath}

For a symmetric convex body $K$ in $\R^n$ and
$J\subseteq\{1,\dots,n\}$ the projection of $K$ on the subspace $\R^J$
spanned by $\{e_j,j \in J\}$ is denoted by $K_J$. Moreover, denote
$\sS^n = \{ -1, 1\}^n$ and $\sS^J = \{ -1, 1\}^J$, and write
$\sigma(t) = \prod_{j \in J} \sign(t_j)$ for $t \in \R^J$ or $t \in
\sS^J$. For $s\in\sS^n$, denote by $R_s$ the set of all $x\in\R^n$ such
that $\sign(x)=s$. 

A symmetric convex body $L$ in $\R^J$ is said to satisfy the
\emph{asymmetry condition} if
\begin{equation}
  \label{eq:27}
  \sum_{s\in\sS^J\!,\;\sigma(s)=1} \!\!\!\! sL \;\;
  \neq \sum_{s\in\sS^J\!,\;\sigma(s)=-1} \!\!\!\! sL.
  \tag{AS}
\end{equation}
For this, the cardinality of $J$ should be even; \eqref{eq:27} never
holds if the cardinality of $J$ is odd. It is easy to see
that \eqref{eq:27} is equivalent to $L$ being not unconditional if the cardinality of $J$ is~$2$; in higher dimensions, \eqref{eq:27} is stronger than $L$ not
being unconditional as can be seen from Example~\ref{ex:as4dim} below.

\begin{theorem}
  \label{thr:asym-uniqueness}
  Let $K$ be a generalised zonoid in $\R^n$ such that all support sets
  $F(K,e_i)$, $i=1,\dots,n$, are singletons. Then $K$ is D-universal
  if and only if $K_J$ satisfies the asymmetry condition \eqref{eq:27}
  for each non-empty $J\subseteq\{1,\dots,n\}$ of even cardinality.
\end{theorem}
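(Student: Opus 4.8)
The plan is to reduce the D-universality of $K$ to the joint recoverability of (i) the zonoid of $|\xi|$ and (ii) the distribution of the signs $\sign(\xi)$ in a way compatible with the absolute values, and then to show that the asymmetry condition \eqref{eq:27} is exactly what makes the sign information identifiable. Starting from \eqref{eq:12}, one expands $h(uK,\xi)=\int_{\Sphere}|\langle u,\xi v\rangle|\,\nu(dv)$ via \eqref{eq:7}, splits $\nu$ into positive and negative parts, and applies Theorem~\ref{thr:m:s:s} (as in the proof of Theorem~\ref{thr:main}) to conclude that $\int_{\Sphere}\E f(\xi v)\,\nu(dv)=\int_{\Sphere}\E f(\eta v)\,\nu(dv)$ for \emph{all} one-homogeneous even $f$. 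The natural test functions are now the $f_{\alpha,J}$ of \eqref{eq:17}: because $\nu$ vanishes on $S_0$ by Lemma~\ref{le:sphere}, one may assume no component of $v$ is zero, so $f_{\alpha,J}(\xi v)=f_{\alpha,J}(\xi)\cdot f_{\alpha,J}(v)=[v]^\alpha\sigma_J(v)\,f_{\alpha,J}(\xi)$, and the integral factorises into $\big(\int_{\Sphere}[v]^\alpha\prod_{i\in J}\sign(v_i)\,\nu(dv)\big)\cdot\E f_{\alpha,J}(\xi)$. So \eqref{eq:12} gives $\E f_{\alpha,J}(\xi)=\E f_{\alpha,J}(\eta)$ for every $\alpha$ and every $J$ for which the scalar prefactor $c_{\alpha,J}:=\int_{\Sphere}[v]^\alpha\prod_{i\in J}\sign(v_i)\,\nu(dv)$ is nonzero.

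For sufficiency, suppose each $K_J$ of even cardinality satisfies \eqref{eq:27}. First, taking $J=\emptyset$ recovers (by Theorem~\ref{thr:main}, whose hypotheses hold) the zonoid equivalence of $|\xi|$ and $|\eta|$; in particular $\E\big([\xi]^\alpha\one_{\xi\in A_E}\big)=\E\big([\eta]^\alpha\one_{\eta\in A_E}\big)$ for $\alpha$ near the basis vectors. I then need, for each $E$ and each $J\subseteq E$, the refined identities $\E\big([\xi]^\alpha\prod_{i\in J}\sign(\xi_i)\one_{\xi\in A_E}\big)=\E\big([\eta]^\alpha\cdots\big)$; summing these over $J\subseteq E$ with the characters $s\mapsto\prod_{i\in J}s_i$ of the group $\sS^E$ lets one separate out, for each sign pattern $s\in\sS^E$, the ``signed moment on the cell $R_s$''. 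The crux is that the prefactor $c_{\alpha,J}$ is nonzero for enough pairs $(\alpha,J)$: for odd $|J|$ it vanishes identically (the integrand is odd in the sign of $v$ under $v\mapsto -v$ combined with the evenness of $\nu$)—but that is fine, since $f_{\alpha,J}$ with $|J|$ odd is itself an odd function and carries no information about symmetric $\xi$. For even $|J|\le|E|$, one must show $c_{\alpha,J}\neq 0$ for $\alpha$ ranging over an open set; I claim this is precisely the content of the asymmetry condition applied to $K_J$, because $c_{\alpha,J}$ as a function of $\alpha$ is, up to the factorisation, the analytic object whose non-vanishing encodes $\sum_{\sigma(s)=1}sK_J\neq\sum_{\sigma(s)=-1}sK_J$; more concretely, integrating $\prod_{i\in J}\sign(v_i)$ against $\nu$ is, after projecting onto $\R^J$, the representing-measure description of $\sum_{\sigma(s)=1}sK_J-\sum_{\sigma(s)=-1}sK_J$, which is a nonzero generalised zonoid iff \eqref{eq:27} holds, and a nonzero generalised zonoid has representing measure not annihilated by all the weights $[v]^\alpha$ with $\alpha$ in an open set (by the cosine-transform injectivity argument used at the end of the proof of Theorem~\ref{thr:main}). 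Granting the non-vanishing, one obtains $\E f_{\alpha,J}(\xi)=\E f_{\alpha,J}(\eta)$ for all even $J\subseteq E$ on an open $\alpha$-set; the odd-$J$ identities hold trivially for symmetric $\xi,\eta$ (both sides vanish); and the Fourier inversion over $\sS^E$ together with Lemma~\ref{lemma:powers-all}-type moment determinacy (Lemma~\ref{lemma:kabluchko}) then forces $\xi$ and $\eta$ to be zonoid equivalent as signed vectors, hence \eqref{eq:1}.

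For necessity, suppose \eqref{eq:27} fails for some even $J$, i.e.\ $\sum_{\sigma(s)=1}sK_J=\sum_{\sigma(s)=-1}sK_J$; equivalently all the prefactors $c_{\alpha,J'}$ with $J'\supseteq J$ odd-symmetric in the $J$-coordinates vanish, so $\E h(uK,\xi)$ is blind to the corresponding signed moments. I would then construct two distinct symmetric $\xi,\eta$ on $\R^n$, supported on the cells $R_s$ with $s$ living in the $\sS^J$-orbit causing the collapse, with identical $|\xi|\deq|\eta|$ and identical $c_{\alpha,J'}$-weighted signed moments for every $(\alpha,J')$ with $c_{\alpha,J'}\neq 0$, but with $\Prob{\sign(\xi_J)\in\{\sigma=1\}}\neq\Prob{\sign(\eta_J)\in\{\sigma=1\}}$—achievable by a signed measure in the kernel of the restricted transform, exactly as the failure of \eqref{eq:27} produces a nonzero ``difference body'' that is the zero convex body—so that \eqref{eq:12} holds while \eqref{eq:1} fails. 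The main obstacle, and the step I expect to need the most care, is the precise translation between the geometric statement $\sum_{\sigma(s)=1}sK_J\neq\sum_{\sigma(s)=-1}sK_J$ and the analytic statement that the $\alpha$-family of weights $c_{\alpha,J}=\int[v]^\alpha\prod_{i\in J}\sign(v_i)\,\nu(dv)$ is not identically zero (indeed nonzero on an open set), including checking that projecting $\nu$ to $\R^J$ is legitimate here and that the resulting measure genuinely represents the claimed Minkowski difference; once that dictionary is in place, the rest follows by the already-developed machinery (Theorems~\ref{thr:main}, \ref{thr:m:s:s}, Lemmas~\ref{lemma:kabluchko}, \ref{lemma:powers-all}).
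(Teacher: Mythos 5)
Your sufficiency argument follows the paper's route (factorise $f_{\alpha,J}(\xi v)=f_{\alpha,J}(\xi)f_{\alpha,J}(v)$ using that $\nu$ does not charge $S_0$, reduce everything to the non-vanishing of $c_{\alpha,J}=\int_{\Sphere}f_{\alpha,J}(v)\,\nu(dv)$ on an open set of $\alpha$, then recover the sign cells by summing characters over subsets of $E$ and invoking the unsigned machinery --- this recombination is exactly Lemma~\ref{lemma:powers-sign}), but the crux, namely that \eqref{eq:27} for $K_J$ forces $c_{\alpha,J}\neq 0$ for some $\alpha\in\Delta_J$, is precisely the step you leave as a claim, and the justification you offer does not work. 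Cosine-transform injectivity concludes ``measure $=0$'' from ``cosine transform $=0$'', whereas what is needed here is a determinacy statement for a different (Mellin-type) family of test functions: if $\int_{\Sphere}[v]^\alpha\prod_{i\in J}\sign(v_i)\,\nu(dv)=0$ for all $\alpha\in\Delta_J$, then $\sum_{s\in\sS^J}\sigma(s)\,h(K_J,su)\equiv 0$, i.e.\ \eqref{eq:27} fails. Moreover, the spot you cite in the proof of Theorem~\ref{thr:main} contains no injectivity argument at all; it is the elementary observation $\int_{\Sphere}[v]^{e_i}\nu(dv)=h(K,e_i)>0$, which settles only the case $J=\emptyset$ and has no analogue for non-empty even $J$, where the integrand is genuinely signed. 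The paper closes exactly this gap by writing $c_{\alpha,J}=\sum_{s\in\sS^J}\sigma(s)\int_{\Sphere\cap R_s}[v]^\alpha\nu(dv)$, applying Lemma~\ref{lemma:powers-all} (hence ultimately the Laplace-transform uniqueness of Lemma~\ref{lemma:kabluchko}, after normalising by one coordinate) to convert the assumed vanishing of these signed moments on $\Delta_J$ into $\sum_{s}\sigma(s)\int_{\Sphere\cap R_s}|\langle su,v\rangle|\,\nu(dv)=0$ for all $u$, and then symmetrising (multiply by $\sigma(t)$, replace $u$ by $ut$, sum over $t\in\sS^J$) to obtain $\sum_{s}\sigma(s)h(K,su)\equiv0$, contradicting \eqref{eq:27}. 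Without an argument of this kind your proof is incomplete at its central point.

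Your necessity direction departs from the paper and is likewise only sketched: you propose two symmetric vectors satisfying \eqref{eq:12} but not zonoid equivalent (implicitly using Proposition~\ref{prop:sphere} to translate back to D-universality, which should be stated). This can be made to work concretely: take $\xi=\epsilon\tau$ and $\eta=\epsilon\tau'$, extended by zeros off $J$, with $\epsilon$ an independent random sign and $\tau,\tau'$ uniform on $\{s\in\sS^J:\sigma(s)=1\}$ and $\{s\in\sS^J:\sigma(s)=-1\}$ respectively; failure of \eqref{eq:27} gives \eqref{eq:12} since $\E h(uK,\xi)$ and $\E h(uK,\eta)$ are the two alternating sums $2^{-(|J|-1)}\sum_{\sigma(s)=\pm1}h(K_J,su_J)$, while zonoid equivalence fails because the segment $I$ in $\R^J$ does satisfy \eqref{eq:27}. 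You neither verify \eqref{eq:12} for a concrete pair nor check the failure of \eqref{eq:1}, so this half also needs completion. The paper's necessity is shorter and purely geometric: the identity \eqref{eq:29} is linear in support functions and stable under taking the closed linear hull, so every body approximable by linear combinations of diagonal transforms of $K_J$ inherits it, whereas the segment $I$ does not; hence the density defining D-universality fails.
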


To prove sufficiency we need a generalisation of Lemma~\ref{lemma:powers-all}.

\begin{lemma}
  \label{lemma:powers-sign}
  Two symmetric integrable random vectors $\xi$ and $\eta$ in $\R^n$
  are zonoid equivalent if and only if
  \begin{equation}
    \label{eq:20}
    \E \big(f_{\alpha,J}(\xi)\one_{\xi\in A_E}\big)
    =\E \big(f_{\alpha,J}(\eta)\one_{\eta\in A_E}\big)
  \end{equation}
  for all $J\subseteq E\subseteq\{1,\dots,n\}$ with $E\neq\emptyset$
  and all $\alpha$ from a relatively open set in $\Delta_E$.
\end{lemma}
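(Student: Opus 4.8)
The plan is to reduce the signed statement to the unsigned one (Lemma~\ref{lemma:powers-all}) by conditioning on the sign pattern. Fix $E\subseteq\{1,\dots,n\}$ nonempty. On the event $\{\xi\in A_E\}$ the sign vector $\sign(\xi)$ takes values in $\sS^E$, and a function $f_{\alpha,J}(x)\one_{x\in A_E}$ with $J\subseteq E$ depends on $x$ only through $|x|$ and through the signs $(\sign(x_i))_{i\in E}$; indeed on $A_E$ one has $f_{\alpha,J}(x)=[x]^\alpha\sigma_J(\sign(x))$ where $\sigma_J(s)=\prod_{i\in J}s_i$. So for each $s\in\sS^E$ introduce the (sub-probability) measures $\mu^E_{\xi,s}$ and $\mu^E_{\eta,s}$ on $(0,\infty)^E$ that are the laws of $|\xi|$ restricted to $\{\xi\in A_E,\ \sign(\xi)=s\}$ (and likewise for $\eta$), weighted in the same way as in Lemma~\ref{lemma:powers-all}. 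Then \eqref{eq:20} becomes, for each $J\subseteq E$,
\begin{displaymath}
  \sum_{s\in\sS^E}\sigma_J(s)\int [x]^\alpha\,\mu^E_{\xi,s}(dx)
  =\sum_{s\in\sS^E}\sigma_J(s)\int [x]^\alpha\,\mu^E_{\eta,s}(dx),
\end{displaymath}
for all $\alpha$ in a relatively open subset of $\Delta_E$.

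Next I would invert the characters $\{\sigma_J:J\subseteq E\}$ of the group $\sS^E\cong(\Z/2\Z)^{|E|}$. These $2^{|E|}$ functions form an orthogonal basis of the functions on $\sS^E$, and the map $s\mapsto(\sigma_J(s))_{J\subseteq E}$ is the Fourier--Walsh transform, which is invertible. Therefore the system of equalities above, running over all $J\subseteq E$, is equivalent to
\begin{displaymath}
  \int [x]^\alpha\,\mu^E_{\xi,s}(dx)=\int [x]^\alpha\,\mu^E_{\eta,s}(dx),
  \qquad s\in\sS^E,
\end{displaymath}
for $\alpha$ in the same relatively open subset of $\Delta_E$. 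Fixing $s$, this is exactly the hypothesis appearing in Lemma~\ref{lemma:powers-all} for the pair of positive random vectors distributed according to $\mu^E_{\xi,s}$ and $\mu^E_{\eta,s}$ (after normalising by the common total mass, with the case of zero mass being trivial). Hence $\mu^E_{\xi,s}=\mu^E_{\eta,s}$ for every $s\in\sS^E$ and every nonempty $E$; summing over $s$ and $E$ recovers the joint law of $(|\xi|,\sign(\xi))$ restricted to each $A_E$, which is more than enough to conclude, via Theorem~\ref{thr:m:s:s} (or directly by reconstructing $\E(|\langle u,\xi\rangle|\one_{\xi\in A_E})$ as in the proof of Lemma~\ref{lemma:powers-all}), that $\xi$ and $\eta$ are zonoid equivalent. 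The converse direction is immediate: each $f_{\alpha,J}$ with $\alpha\in\Delta_E$ is one-homogeneous and even (evenness requires $|J|$ even, but in fact for $\xi$ symmetric the left side of \eqref{eq:20} vanishes unless $|J\cap E|$ is even, so only such $J$ contribute), so Theorem~\ref{thr:m:s:s} gives \eqref{eq:20}.

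The one delicate point, and the step I expect to need the most care, is the passage from Lemma~\ref{lemma:powers-all}'s hypothesis being stated on the unit simplex $\Delta_E$ to the Fourier inversion argument: one must make sure that after inverting over $J$, the resulting equalities hold for $\alpha$ in a genuinely (relatively) open subset of $\Delta_E$, and that the normalisation constants $c_s=\mu^E_{\xi,s}((0,\infty)^E)$ match between $\xi$ and $\eta$ — this last equality follows by taking $J=\emptyset$ and again inverting, or by taking $\alpha=e_k$ for $k\in E$ as in the proof of Theorem~\ref{thr:main}. Beyond that, the argument is a routine bookkeeping over the lattice of subsets $E$ and the group $\sS^E$, exactly parallel to the unsigned case.
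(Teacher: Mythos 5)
Your proposal follows essentially the same route as the paper's proof: the identity $\sum_{J\subseteq E}\prod_{i\in J}s_i\sign(x_i)\one_{x\in A_E}=2^{|E|}\one_{\sign(x)=s}$ is exactly your Fourier--Walsh inversion over the characters of $\{-1,1\}^E$, after which the paper, like you, applies Lemma~\ref{lemma:powers-all} to the sign-restricted vectors $|\xi|\one_{\sign(\xi)=s}$ and $|\eta|\one_{\sign(\eta)=s}$ and then sums over $s$ after replacing $u$ by $us$. The one caveat is that Lemma~\ref{lemma:powers-all} yields only zonoid equivalence of these restricted vectors, not equality of the measures $\mu^E_{\xi,s}=\mu^E_{\eta,s}$ as you assert (equality of the moments $\E[\cdot]^\alpha$ for $\alpha$ in a relatively open subset of the simplex $\Delta_E$ does not determine the law); however, your parenthetical fallback---reconstructing $\E\big(|\langle u,\xi\rangle|\one_{\xi\in A_E}\big)$ as in the proof of Lemma~\ref{lemma:powers-all}---is the correct and sufficient conclusion, and it is precisely what the paper does.
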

\begin{proof}
\textsl{Necessity.} Theorem~\ref{thr:m:s:s} implies \eqref{eq:20} if the cardinality of
  $J$ is even, otherwise both sides of \eqref{eq:20}
  vanish.\\
\textsl{Sufficiency.} Let $s\in\{-1,0,1\}^n$ and $E$ be such
  that $s_i\neq 0$ for $i\in E$ and $s_i=0$ for $i\notin E$. Then
  \begin{displaymath}
    \sum_{J\subseteq E} \prod_{i\in J}s_i\sign(x_i)\one_{x\in A_E} 
    =2^{m} \one_{\sign(x)=s}, \quad x\in\R^n,
  \end{displaymath}
  where $m$ is the cardinality of $E$.  Using this in \eqref{eq:20}, we have
  \begin{displaymath}
    \E \big([\xi]^\alpha\one_{\xi\in A_E}\one_{\sign(\xi)=s}\big)
    =\E \big([\eta]^\alpha\one_{\eta\in A_E}\one_{\sign(\eta)=s}\big).
  \end{displaymath}
  By Lemma~\ref{lemma:powers-all}, $|\xi|\one_{\sign(\xi)=s}$ and
  $|\eta|\one_{\sign(\eta)=s}$ are zonoid equivalent, whence 
  \begin{displaymath}
    \E \big(|\langle u,|\xi|\rangle|\one_{\sign(\xi)=s}\big)
    =\E \big(|\langle u,|\eta|\rangle|\one_{\sign(\eta)=s}\big), \quad u\in\R^n.
  \end{displaymath}
  Since this equality holds for all $u$, it is possible to replace $u$
  by $us$, and it remains to take the sum over $s$.
\end{proof}

\begin{proof}[Proof of Theorem~\ref{thr:asym-uniqueness}]
  \textsl{Necessity.} Assume that $K_J$ does not satisfy \eqref{eq:27} for some non-empty $J$ with even cardinality. Then 
  \begin{equation}
    \label{eq:29}
    \sum_{s\in\sS^J\!,\;\sigma(s)=1} \!\!\!\! sL \;\;=
    \sum_{s\in\sS^J\!,\;\sigma(s)=-1} \!\!\!\! sL
  \end{equation}
  holds for all convex bodies $L$ in $\R^J$, whose support functions
  lie in the closed linear hull of support functions of the diagonal
  transforms of~$K_J$. Since the unit segment $I$ in $\R^J$ does not
  satisfy \eqref{eq:29}, $I$ cannot be approximated which is a
  contradiction to $K$ being D-universal.
  
  \textsl{Sufficiency.}  
  The representing measure of $K$ is denoted by~$\nu$. Let $\xi$ and
  $\eta$ be symmetric random vectors. Repeating the first step in the
  proof of Theorem~\ref{thr:main}, we arrive at
  \begin{displaymath}
    \int_{\Sphere} \E \big(f_{\alpha,J}(\xi v)\one_{\xi v\in A_E}\big) \nu(dv)
    =\int_{\Sphere}\E \big(f_{\alpha,J}(\eta v)\one_{\eta v\in A_E}\big)  \nu(dv)
  \end{displaymath}
  for all $\alpha\in\Delta_E$ and $J\subseteq E\subseteq\{1,\dots,n\}$
  where $J\neq\emptyset$ and $J$ has even cardinality. As $\nu$ does
  not charge $S_0$,
  \begin{displaymath}
    \int_{\Sphere} f_{\alpha,J}(v)\nu(dv)
    \Big(\E \big(f_{\alpha,J}(\xi)\one_{\xi\in A_E}\big)
    -\E \big(f_{\alpha,J}(\eta)\one_{\eta\in A_E}\big)\Big)=0.
  \end{displaymath}
  Now fix $J$. We show that
  \begin{equation}
    \label{eq:Jneq}
    \int_{\Sphere}  f_{\alpha,J}(v)\nu(dv) \neq 0
  \end{equation}
  for some $\alpha\in\Delta_J\subseteq\Delta_E$, and therefore also
  for all $\alpha$ in a relatively open set in $\Delta_E$ by
  continuity of $[x]^\alpha$ in $\alpha$ for $x$ with non-vanishing
  components. Assume that
  \begin{equation}
    \int_{\SS^J}  f_{\alpha,J}(w) \nu_J(dw)
    = \int_{\Sphere}  f_{\alpha,J}(v) \nu(dv) = 0,\quad \alpha \in \Delta_J,
  \end{equation}
  where $\SS^J$ is the unit sphere in $\R^J$ and $\nu_J$ is the
  representing measure of~$K_J$. We assume that $J = \{1,\dots,n\}$
  with an even $n$ and $\nu=\nu_J$, the general case being
  similar. Then
  \begin{align*}
    \int_{\Sphere}  f_{\alpha,J}(v) \nu(dv) 
    &=  \int_{\Sphere} \prod_{i=1}^n v_i^{\langle \alpha_i \rangle} \nu(dv) \\
    & = \sum_{s \in \sS^n} \int_{\Sphere\cap R_s} [v]^\alpha \sigma(v) \nu(dv) \\
    & =  \sum_{s \in \sS^n} \int_{\Sphere\cap R_s}
    [v]^\alpha \sigma(s) \nu(dv), 
  \end{align*}
  where the last equality holds because $\sigma(v) = \sigma(s)$ for
  $v\in R_s$. Since the expression in the last line is zero for $\alpha \in
  \Delta_J$, Lemma~\ref{lemma:powers-all} implies that
  \begin{displaymath}
    0 = \sum_{s\in\sS^n} \sigma(s) \int_{\Sphere\cap R_s} |\langle u,|v|\rangle| \nu(dv) =
    \sum_{s\in\sS^n} \sigma(s) \int_{\Sphere\cap R_s} |\langle su,v\rangle| \nu(dv),\quad u\in\R^n.
  \end{displaymath}
  Multiplying this with $\sigma(t)$, replacing $u$ with $ut$, taking
  the sum over $t\in\sS^n$, and writing $t$ instead of $ts$ yield that
  \begin{displaymath}
    0 = \sum_{t\in\sS^n} \sigma(t)\sum_{s\in\sS^n}\int_{\Sphere\cap R_s}|\langle tu,v\rangle| \nu(dv) = \sum_{t\in\sS^n} \sigma(t)\int_{\Sphere}|\langle tu,v\rangle| \nu(dv),\quad u\in\R^n.
  \end{displaymath}
  Therefore, for $u \in \R^n$
  \begin{displaymath}
    \sum_{s \in \sS^n} \sigma(s) h(K, su)
    =  \sum_{s \in \sS^n} \sigma(s)
    \int_{\Sphere} |\langle v, su \rangle| \nu(dv) = 0.
  \end{displaymath}
  This contradicts \eqref{eq:27}, so \eqref{eq:Jneq} is proven.

  If the cardinality of $J$ is odd, then the symmetry of $\xi$ and
  $\eta$ yields that \eqref{eq:20} holds with both sides vanishing. If
  $J=\emptyset$ and $E\neq\emptyset$, then \eqref{eq:20} holds
  since $|\xi|$ and $|\eta|$ are zonoid equivalent by Theorem~\ref{thr:main}. Finally, the
  result follows from Lemma~\ref{lemma:powers-sign}.
\end{proof}

\begin{corollary}
  \label{co:2-dim}
  A symmetric convex body $K$ in $\R^2$ is D-universal if and
  only if $K$ is not unconditional.
\end{corollary}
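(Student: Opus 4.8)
The plan is to obtain both implications from results already established, using that for a planar body ``not unconditional'' is precisely the asymmetry condition \eqref{eq:27} for the unique relevant index set $J=\{1,2\}$ (recall the remark following \eqref{eq:27}: when $|J|=2$, condition \eqref{eq:27} holds if and only if the body is not unconditional).

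\emph{Necessity.} I would argue the contrapositive. If $K$ is unconditional, then for every unit vector $v$ the map $u\mapsto h(vK,u)=h(K,vu)$ is unconditional, because changing the sign of a coordinate of $u$ changes the sign of the corresponding coordinate of $vu$, to which $h(K,\cdot)$ is insensitive. Hence every finite linear combination of support functions of diagonal bodies generated by $K$, and every uniform limit of these, is unconditional; since the support function $u\mapsto|u_1+u_2|$ of the segment $I$ is not unconditional, it lies outside this closed span, so $K$ is not D-universal by Theorem~\ref{thr:equiv}. (Alternatively, $\xi$ uniform on $\{\pm(1,1)\}$ and $\eta$ uniform on $\{\pm(1,-1)\}$ are symmetric, satisfy \eqref{eq:12} because $K$ is unconditional, yet are not zonoid equivalent, so Proposition~\ref{prop:sphere} gives the claim.)

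\emph{Sufficiency.} Assume $K$ is not unconditional. As recalled in the proof of Corollary~\ref{cor:zon-uncond}, $K$ is a zonoid in $\R^2$; let $\nu\geq0$ be its representing measure on $S^1$. Splitting $\nu$ into its restrictions to $S^1\setminus\{\pm e_1,\pm e_2\}$ and to $\{\pm e_1,\pm e_2\}$ and using evenness of $\nu$, I would write $K=K_0+wB_\infty$ with $w\in\R_+^2$, where $K_0$ is the zonoid with representing measure $\nu|_{S^1\setminus\{\pm e_1,\pm e_2\}}$. That measure vanishes on $S_0=\{v\in S^1:\ v_1v_2=0\}$, so $F(K_0,e_1)$ and $F(K_0,e_2)$ are singletons by Lemma~\ref{le:sphere}. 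Since $wB_\infty$ is unconditional and $h(K_0,\cdot)=h(K,\cdot)-h(wB_\infty,\cdot)$, the body $K_0$ is unconditional if and only if $K$ is; hence $K_0$ is not unconditional. Applying Theorem~\ref{thr:asym-uniqueness} to $K_0$: the only non-empty $J\subseteq\{1,2\}$ of even cardinality is $J=\{1,2\}$, with $(K_0)_J=K_0$, and \eqref{eq:27} for $K_0$ holds because $K_0$ is not unconditional; thus $K_0$ is D-universal. Finally, $K=K_0+wB_\infty$ is D-universal by the remark following Proposition~\ref{prop:segments}.

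\emph{Main obstacle.} There is no hard analytic step; the care needed is in the bookkeeping --- that the decomposition $K=K_0+wB_\infty$ simultaneously preserves ``not unconditional'' and produces single-point coordinate support sets, and that in $\R^2$ the hierarchy of asymmetry conditions in Theorem~\ref{thr:asym-uniqueness} collapses to the single condition covered by the remark after \eqref{eq:27}. The one substantive point is that Theorem~\ref{thr:asym-uniqueness} must be applied to the reduced body $K_0$ rather than to $K$ itself, since a general planar $K$ may have segments, not points, as its support sets in the directions $e_1$ and $e_2$; Proposition~\ref{prop:segments} is exactly what removes these.
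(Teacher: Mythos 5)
Your proof is correct and follows essentially the same route as the paper: the same decomposition $K=K_0+wB_\infty$ from Corollary~\ref{cor:zon-uncond}, the observation that in $\R^2$ condition \eqref{eq:27} for $J=\{1,2\}$ reduces to $K_0$ being not unconditional, then Theorem~\ref{thr:asym-uniqueness} applied to $K_0$ and Proposition~\ref{prop:segments}, with necessity handled by noting that an unconditional $K$ generates only unconditional bodies. Your write-up just spells out the bookkeeping (and an alternative necessity argument via Proposition~\ref{prop:sphere}) more explicitly than the paper does.
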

\begin{proof}
  Decompose $K$ as in the proof of Corollary~\ref{cor:zon-uncond}. If
  $K$ is not unconditional, then $K_0$ is not unconditional, which in
  $\R^2$ implies \eqref{eq:27}. Hence $K_0$ is D-universal by
  Theorem~\ref{thr:asym-uniqueness}, and therefore $K$ is D-universal
  by Proposititon~\ref{prop:segments}. Necessity is clear since only
  unconditional convex bodies are generated by unconditional $K$. 
\end{proof}

\begin{corollary}
  A generalised zonoid $K$ in $\R^3$ such that $F(K,e_i)$, $i=1,2,3$,
  are singletons is D-universal if and only if none of the
  two-dimensional projections of $K$ is unconditional.
\end{corollary}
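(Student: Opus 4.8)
The plan is to specialise Theorem~\ref{thr:asym-uniqueness} to the case $n=3$; essentially no new work is required beyond bookkeeping of index sets. First I would check that the hypotheses match: $K$ is assumed to be a generalised zonoid in $\R^3$ with all support sets $F(K,e_i)$, $i=1,2,3$, singletons, which is precisely the standing hypothesis of Theorem~\ref{thr:asym-uniqueness}. Hence that theorem applies directly and states that $K$ is D-universal if and only if $K_J$ satisfies the asymmetry condition \eqref{eq:27} for every non-empty $J\subseteq\{1,2,3\}$ of even cardinality.

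Next I would enumerate the relevant index sets. The only even cardinalities available in a three-element ground set are $0$ and $2$; since $J$ must be non-empty, $J$ ranges exactly over the three two-element subsets $\{1,2\}$, $\{1,3\}$, $\{2,3\}$. By definition $K_J$ is the orthogonal projection of $K$ onto the coordinate plane $\R^J$, so the bodies $K_{\{1,2\}}$, $K_{\{1,3\}}$, $K_{\{2,3\}}$ are precisely the two-dimensional projections of $K$ referred to in the statement.

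Finally I would invoke the observation recorded immediately after \eqref{eq:27}: when $\lvert J\rvert = 2$, the asymmetry condition \eqref{eq:27} for $K_J$ is equivalent to $K_J$ \emph{not} being unconditional. Putting the three points together, $K$ is D-universal if and only if none of its three coordinate-plane projections is unconditional, which is the assertion of the corollary.

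I do not anticipate any real obstacle here; the only points demanding a moment's care are verifying that the list of non-empty even-cardinality subsets of $\{1,2,3\}$ is exhausted by the three pairs, and matching the theorem's notation $K_J$ with the phrase ``two-dimensional projections'' in the statement. Both are immediate, so the proof is short and purely deductive from Theorem~\ref{thr:asym-uniqueness} and the remark preceding it.
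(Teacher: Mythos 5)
Your argument is correct and is exactly the intended one: the paper states this corollary without proof precisely because it follows immediately from Theorem~\ref{thr:asym-uniqueness}, since the non-empty even-cardinality subsets of $\{1,2,3\}$ are the three pairs and, as noted after \eqref{eq:27}, for $|J|=2$ the asymmetry condition is equivalent to $K_J$ not being unconditional. No gaps; your bookkeeping of the index sets and identification of $K_J$ with the coordinate-plane projections matches the paper's setup.
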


\begin{example}
  \label{ex:as4dim}
  In four dimensions consider convex body $K =
  \sum_{s\in\sS^4} a_s s I$. If 
  \begin{align*}
    a_{(1,1,1,1)} & = 1,& a_{(-1,-1,1,1)} & = 2,
    & a_{(-1,1,-1,1)} & = 20,& a_{(-1,1,1,-1)} & = 24, \\
    a_{(-1,1,1,1)} & = 18,& a_{(1,-1,1,1)} & = 17,
    & a_{(1,1,-1,1)} & = 4,& a_{(1,1,1,-1)} & = 8,
  \end{align*}
  then all six two-dimensional projections of $K$ satisfy the
  asymmetry condition (in particular, $K$ is not unconditional),
  whereas the four-dimensional case of condition \eqref{eq:27}
  fails. 
\end{example}

\subsection{Transformation of surface area measures}
\label{sec:transf-surf-area}

Let $S_{n-1}(L,\cdot)$ be the surface area measure of a symmetric
convex body $L$, see \cite[Sec.~4.2]{schn2}. The $K$-transform of
$S_{n-1}(L,\cdot)$ is the support function given by
\begin{displaymath}
  \int_{\Sphere} h(uK,v)S_{n-1}(L,dv) = nV(L,\dots,L,uK), \quad u\in\R^n,
\end{displaymath}
where $V(L,\dots,L,uK)$ is the mixed volume of $L$ and $uK$. 
The injectivity of $T_K$ implies that the values $V(L,\dots,L,uK)$ for
$u\in\R^n$ uniquely determine the set $L$. Theorems~\ref{thr:main} and
\ref{thr:asym-uniqueness} imply the following fact.

\begin{corollary}
  If $K$ is a generalised zonoid such that all support sets
  $F(K,e_i)$, $i=1,\dots,n$, are singletons, then each unconditional
  convex body $L$ is uniquely determined by the values
  $V(L,\dots,L,uK)$, $u\in\R^n$. A general origin symmetric convex
  body $L$ is uniquely determined if $K$ satisfies the condition of
  Theorem~\ref{thr:asym-uniqueness}.
\end{corollary}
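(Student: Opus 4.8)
The plan is to reduce both assertions to the injectivity of $T_K$ on a suitable class of measures, followed by an appeal to the classical uniqueness theorem for surface area measures. I would start from the identity recorded at the beginning of this subsection, $n\,V(L,\dots,L,uK)=\bigl(T_K S_{n-1}(L,\cdot)\bigr)(u)$ for $u\in\R^n$. Consequently, if $L$ and $L'$ are origin symmetric convex bodies with $V(L,\dots,L,uK)=V(L',\dots,L',uK)$ for all $u$, then $T_K\mu=0$ for the finite signed measure $\mu=S_{n-1}(L,\cdot)-S_{n-1}(L',\cdot)$, which is even because $L$ and $L'$ are origin symmetric. Everything then reduces to showing $\mu=0$; once this is known, $L=L'$ follows since a symmetric convex body is uniquely determined by its surface area measure (see \cite[Sec.~8.1]{schn2}).

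For the first assertion I read "uniquely determined" as uniqueness within the class of unconditional convex bodies, so both $L$ and $L'$ are unconditional. Their surface area measures are then unconditional too, because $S_{n-1}(\rho L,\cdot)$ is the push-forward of $S_{n-1}(L,\cdot)$ under a linear isometry $\rho$, in particular under each coordinate reflection; hence $\mu$ is an unconditional even signed measure. By Theorem~\ref{thr:main}, $K$ is unconditionally D-universal, which — as recorded after Theorem~\ref{thr:equiv-pos} — says exactly that $T_K$ is injective on unconditional even signed measures on $\Sphere$, so $\mu=0$.

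For the second assertion I only need full injectivity of $T_K$: by Theorem~\ref{thr:asym-uniqueness} the stated condition on $K$ makes it D-universal, and by Theorem~\ref{thr:equiv} this is equivalent to $T_K$ being injective on all finite even signed measures on $\Sphere$. Since $\mu$ is such a measure for arbitrary origin symmetric $L,L'$, again $\mu=0$ and $L=L'$.

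The step that needs genuine care — and where I expect the real content of the argument to lie — is the identification of "unconditionally D-universal" with "$T_K$ injective on unconditional even signed measures", the delicate point being that surface area measures may be carried by $S_0$ (for instance $S_{n-1}(B_\infty,\cdot)$ is supported on $\{\pm e_1,\dots,\pm e_n\}\subset S_0$). I would handle this via the folding/unfolding correspondence between unconditional even (signed) measures on $\Sphere$ and (signed) measures on $\Splus$: an unconditional measure is recovered from its fold, the fold of the measure attached to a symmetric random vector $\xi$ is the measure attached to $|\xi|$, and Theorem~\ref{thr:main} combined with injectivity of the cosine transform forces these folds to agree. Beyond this bookkeeping, the corollary is a formal consequence of Theorems~\ref{thr:main} and \ref{thr:asym-uniqueness}.
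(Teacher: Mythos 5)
Your proof is correct and takes essentially the same route the paper intends: the identity $nV(L,\dots,L,uK)=(T_K S_{n-1}(L,\cdot))(u)$, injectivity of $T_K$ on unconditional measures (via Theorem~\ref{thr:main} and the interpretation after Theorem~\ref{thr:equiv-pos}) resp.\ on all finite even signed measures (via Theorem~\ref{thr:asym-uniqueness} and Theorem~\ref{thr:equiv}), and Aleksandrov's uniqueness theorem for surface area measures of origin symmetric bodies. The paper states the corollary as an immediate consequence of Theorems~\ref{thr:main} and \ref{thr:asym-uniqueness}; your folding/unfolding discussion merely spells out the bookkeeping the paper leaves implicit.
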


If $K=I$ from \eqref{eq:8}, then $V(L,\dots,L,uK)$ as function of $u$
is the support function of the projection body of $L$, see
\cite[Sec.~5.3.2]{schn2}. For a general $K$, one obtains a
generalisation of the projection body transform, so that
$V(L,\dots,L,uK)$, $u\in\R^n$, is the support function of a convex
body called the $K$-transform of $L$.

\begin{example}
  Let $L=B$ be the unit Euclidean ball. Then $V(B,\dots,B,uK)$ is
  proportional to the mean width of $uK$. The $K$-transform of
  the unit ball has the support function 
  \begin{displaymath}
    \frac{1}{n}\int_{\Sphere} h(uK,v)dv, \quad u\in\R^n. 
  \end{displaymath}
\end{example}

\section{Diagonally transformed $\ell_p$-balls and one-sided stable
  laws}
\label{sec:scaled-ell_p-balls}

\subsection{$D_p$-balls}
\label{sec:d_p-balls}

For a symmetric closed convex set $L$, 
the \emph{Minkowski functional} is defined by
\begin{displaymath}
  \|u\|_L=\inf\{t>0:\; u\in tL\}, \quad u\in\R^n.
\end{displaymath}
If $L$ is a symmetric convex body with non-empty interior, then $\|u\|_L$ is a norm
with the unit ball being $L$.
Let $\|x\|_p$ be the $p$-norm of $x$ with $p\in[1,\infty]$, and let
\mbox{$B_p=\{x:\; \|x\|_p\leq1\}$} be the $\ell_p$-ball in $\R^n$. 

\begin{definition}
  \label{def:lp1}
  Let $\mu$ be a finite measure on $\Sphere$, and let
  $p\in[1,\infty]$. The convex body $L$ with the
  Minkowski functional
  \begin{equation}
    \label{eq:28a}
    \|u\|_L =\int_{\Sphere} \|uv\|_p\, \mu(dv),\quad u\in\R^n,
  \end{equation}
  is called a \emph{$D_p$-ball}. The measure $\mu$ is called the \emph{spectral measure} of $L$.
\end{definition}

In particular, $B_p$ is a $D_p$-ball with the spectral measure being
the Dirac measure at $(1,\dots,1)$.  Obviously, $D_p$-balls are
unconditional for each $p$ and $\mu$. The measure $\mu$ in
Definition~\ref{def:lp1} can always be chosen either unconditional or
supported by $\Splus=\Sphere\cap\R_+^n$. Since $\mu$ is finite, all
$D_p$-balls have non-empty interior. Moreover $L$ is bounded if and
only if $\mu$ is not supported by a coordinate hyperplane.

It is easy to identify $\|u\|_L$ from \eqref{eq:28a} with $(T_K\mu)(u)$
for $K=B_q$ with $1/p+1/q=1$.  Furthermore, \eqref{eq:28a} can be
expressed as 
\begin{displaymath}
  \|u\|_L =\E\|u\eta\|_p
\end{displaymath}
using an integrable random vector $\eta$.  The polar body $L^\circ$ of the
$D_p$-ball $L$ is a diagonal body generated by~$B_q$ since
\begin{displaymath}
  h(L^\circ,u)=\E \|u\eta\|_p =\E h(uB_q,\eta)=\E h(\eta B_q,u),
  \quad u\in\R^n.
\end{displaymath}
For instance, the polar body of a $D_2$-ball is derived from diagonal
transformations of the Euclidean ball,
see Example~\ref{ex:2}.  
Note that $L$ is unbounded if and only if $L^\circ$ lies in a
coordinate hyperplane.

For $u\in\R_+^n$ and $x\in\R$ we use the notation $u^x=(u_1^x,\dots,u_n^x)$.

\begin{example}[$D_1$-balls]
  If $p=1$, then
  \begin{displaymath}
    \int_{\Sphere} \|uv\|_1\mu(dv)
    =\Big\langle \int_{\Sphere} |v|\mu(dv),|u|\Big\rangle
    =\|uw\|_1=\|u\|_{w^{-1}B_1},
  \end{displaymath}
  whence each $D_1$-ball can be obtained as $w^{-1} B_1$ for some
  $w\in\R^n_+$. A component $w_j$ is zero if and only if $\mu$ is
  supported by the coordinate hyperplane perpendicular to $e_j$. In
  this case the $D_1$-ball is unbounded in the $e_j$-direction.
\end{example}

\begin{remark}
  Definition~\ref{def:lp1} can be extended to $p\in(0,1)$, or to
  averages of arbitrary diagonally transformed norms on $\R^n$; this
  results in a (not necessarily convex) star-shaped set
  $L$. Furthermore, it is possible to introduce an $\Lp[r]$-variant of
  $D_p$-ball with $r\in[1,\infty)$ by considering $(\E
  \|u\eta\|_p^r)^{1/r}$, that is, the $\Lp[r]$-norm of $\|u\eta\|_p$.
  We may also allow $\mu$ to be a signed measure, however restricted
  by the requirement that the right-hand side of \eqref{eq:28a} is
  non-negative for all $u$.
\end{remark}

\subsection{One-sided strictly stable random vectors}
\label{sec:one-sided-strictly}

In the following we show that $D_p$-balls naturally appear in
relation to one-sided stable laws if $p\in[1,\infty)$ and to
max-stable laws if $p=\infty$.  A random vector $\xi$ in $\R^n$ is
\emph{strictly $\alpha$-stable} if
\begin{equation}
  \label{eq:25}
  (t+s)^{1/\alpha}\xi \deq t^{1/\alpha}\xi'+s^{1/\alpha}\xi''
\end{equation}
for all $t,s>0$, where $\xi'$ and $\xi''$ are independent copies of
$\xi$ and $\deq$ denotes the equality in distribution. The parameter $\alpha$ is called \emph{characteristic exponent}. The geometric
interpretation of symmetric stable random vectors is worked out in
\cite{mo09}. In this case, $\alpha\in(0,2]$ (where $\alpha=2$ means
that $\xi$ is Gaussian), and the characteristic function of $\xi$ can
be written as
\begin{equation}
  \label{repr-stable}
  \E \RMe^{\imath \langle \xi,u\rangle} =\exp\{-\|u\|_F^\alpha\}, \quad u\in\R^n,
\end{equation}
where $F$ is an $\LL_\alpha$-ball, that is, 
\begin{displaymath}
  \|u\|_F^\alpha=\int_{\Sphere} |\langle u,v\rangle|^\alpha\sigma(dv)
\end{displaymath}
for a finite measure $\sigma$ on the unit sphere.

If $\xi$ is strictly $\alpha$-stable and $\xi\in\R_+^n$ a.s., then
$\xi$ is said to be \emph{one-sided} or totally skewed to the
right. In this case $\alpha\in(0,1]$, $\alpha=1$ identifies a
deterministic $\xi$, and the Laplace transform of $\xi$ is given by
\begin{equation}
  \label{eq:22}
  \E \RMe^{-\langle \xi,u\rangle} =\exp\Big\{-\int_{\Splus} \langle
  u,v\rangle^\alpha\sigma(dv)\Big\}, \quad u\in\R_+^n,
\end{equation}
where the spectral measure $\sigma$ is finite and supported by
$\Splus$. This follows, e.g., by applying general
results of \cite{dav:mol:zuy08} to the semigroup
$\R_+^n$ with the usual addition and identical involution. 

\begin{theorem}
  \label{thr:one-sided}
  A non-trivial random vector $\xi$ in $\R_+^n$ is strictly
  $\alpha$-stable with $\alpha\in(0,1]$ if and only if there exists a
  $D_{1/\alpha}$-ball $L$ such that the Laplace transform of $\xi$ is
  given by
  \begin{equation}
    \label{eq:23}
    \E \RMe^{-\langle \xi,u\rangle} =\exp\{-\|u^\alpha\|_L\}, \quad u\in\R_+^n.
  \end{equation}
\end{theorem}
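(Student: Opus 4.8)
The plan is to reduce the statement to the known description of one-sided strictly $\alpha$-stable vectors through their Laplace transforms, and then to match the two resulting families of exponents by the substitution $w=u^\alpha$. Recall from \eqref{eq:22} (and the results of \cite{dav:mol:zuy08} cited there) that a random vector $\xi$ in $\R_+^n$ is strictly $\alpha$-stable with $\alpha\in(0,1]$ if and only if there is a finite measure $\sigma$ on $\Splus$ with
\begin{displaymath}
  \E\RMe^{-\langle\xi,u\rangle}=\exp\Big\{-\int_{\Splus}\langle u,v\rangle^\alpha\,\sigma(dv)\Big\},\quad u\in\R_+^n.
\end{displaymath}
Since a probability measure on $\R_+^n$ is determined by its Laplace transform on $\R_+^n$, it suffices to prove that, as $L$ ranges over the $D_{1/\alpha}$-balls and $\sigma$ over the finite measures on $\Splus$, the functions $u\mapsto\|u^\alpha\|_L$ and $u\mapsto\int_{\Splus}\langle u,v\rangle^\alpha\,\sigma(dv)$ run through one and the same family of functions on $\R_+^n$; non-triviality of $\xi$ will correspond to $\sigma\neq0$, equivalently to a non-degenerate $L$.

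The core computation is that for a $D_{1/\alpha}$-ball $L$, whose spectral measure $\mu$ may by the discussion after Definition~\ref{def:lp1} be assumed supported by $\Splus$, and for $u\in\R_+^n$, $v\in\Splus$, the $i$th coordinate of $u^\alpha v$ equals $u_i^\alpha v_i\geq0$, so, because $1/(1/\alpha)=\alpha$,
\begin{displaymath}
  \|u^\alpha v\|_{1/\alpha}=\Big(\sum_{i=1}^n (u_i^\alpha v_i)^{1/\alpha}\Big)^{\alpha}
  =\Big(\sum_{i=1}^n u_i\,v_i^{1/\alpha}\Big)^{\alpha}=\langle u,v^{1/\alpha}\rangle^{\alpha}.
\end{displaymath}
Integrating against $\mu$ gives $\|u^\alpha\|_L=\int_{\Splus}\langle u,v^{1/\alpha}\rangle^\alpha\,\mu(dv)$, so that $w=u^\alpha$ is exactly what converts the Minkowski functional of a $D_{1/\alpha}$-ball into the $\alpha$-homogeneous integral appearing in the stable Laplace transform.

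It then remains to transport measures along the two mutually inverse homeomorphisms $\phi(v)=v^{1/\alpha}/\|v^{1/\alpha}\|$ and $\psi(v')=v'^{\alpha}/\|v'^{\alpha}\|$ of $\Splus$ onto itself. Given $L$ with spectral measure $\mu$ on $\Splus$, writing $v^{1/\alpha}=\|v^{1/\alpha}\|\,\phi(v)$ yields $\langle u,v^{1/\alpha}\rangle^\alpha=\|v^{1/\alpha}\|^\alpha\langle u,\phi(v)\rangle^\alpha$, so the push-forward $\sigma$ of $\|v^{1/\alpha}\|^\alpha\,\mu(dv)$ under $\phi$ satisfies $\|u^\alpha\|_L=\int_{\Splus}\langle u,v'\rangle^\alpha\,\sigma(dv')$ and is finite since $1/\alpha\geq1$ forces $\|v^{1/\alpha}\|\leq\|v\|=1$ on $\Splus$. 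Conversely, given a finite $\sigma$ on $\Splus$, let $\mu$ be the push-forward of $\|v'^{\alpha}\|\,\sigma(dv')$ under $\psi$, which is finite because $\|v'^{\alpha}\|\leq\sqrt n$ on $\Splus$; using $\psi(v')^{1/\alpha}=v'/\|v'^{\alpha}\|^{1/\alpha}$ together with the core computation one checks that the $D_{1/\alpha}$-ball with spectral measure $\mu$ produces exactly \eqref{eq:23}. When $\alpha=1$ both sides are linear in $u$, $\xi$ is deterministic, and $L$ is a scaled $\ell_1$-ball, in agreement with the $D_1$-ball example above.

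The argument is largely bookkeeping once the substitution $w=u^\alpha$ has been spotted. The points needing a little care are the bijectivity and continuity of $\phi$ and $\psi$, which are immediate because $\|v^{1/\alpha}\|$ and $\|v'^{\alpha}\|$ are bounded away from $0$ and from $\infty$ on $\Splus$, and the finiteness of the two push-forward measures, for which one uses that numbers in $[0,1]$ raised to the positive powers $\alpha$ and $1/\alpha$ stay in $[0,1]$. I expect the main (minor) obstacle to be phrasing cleanly that \eqref{eq:22} is an equivalence usable in both directions, so that the equality of the two families of exponents immediately gives the theorem.
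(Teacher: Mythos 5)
Your proposal is correct and follows essentially the same route as the paper: the identity $\|u^\alpha v\|_{1/\alpha}=\langle u,v^{1/\alpha}\rangle^\alpha$ is exactly the computation the paper carries out (there written probabilistically via a random vector $\eta$ distributed according to the normalised spectral measure, with $L$ the $D_{1/\alpha}$-ball generated by $\eta^\alpha$), and your push-forward maps $\phi,\psi$ just make the same spectral-measure correspondence explicit on the sphere. The only point you flag as delicate, using \eqref{eq:22} ``in both directions'', is harmless: the converse direction is the paper's one-line sufficiency argument (the exponent is $\alpha$-homogeneous in $u$, so the Laplace transform of $t^{1/\alpha}\xi'+s^{1/\alpha}\xi''$ coincides with that of $(t+s)^{1/\alpha}\xi$, and Laplace transforms determine laws on $\R_+^n$), which you already have available.
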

\begin{proof}
Sufficiency is immediate, since the Laplace transform of $t^{1/\alpha}\xi$ is $\exp\{-t\|u^\alpha\|_L\}$, whence \eqref{eq:25} holds. To prove necessity, if $\alpha=1$, then $\xi=x$ is deterministic, and $L=x^{-1}B_1$ is a
  $D_1$-ball. Now assume that $\alpha\in(0,1)$ and $\xi$ satisfies \eqref{eq:22}.
  Let $\tilde\eta$ be distributed on $\Splus$ according to the
  normalised $\sigma$, and let
  $\eta=\sigma(\Splus)^{1/\alpha}\tilde\eta$, so that
  \begin{displaymath}
    \int_{\Splus} \langle
    u,v\rangle^\alpha\sigma(dv)=\E(\langle u,\eta\rangle)^\alpha
    =\E \|u^\alpha \eta^\alpha\|_{1/\alpha}= \|u^\alpha\|_L, \quad u\in\R_+^n,
  \end{displaymath}
  where $L$ is the $D_{1/\alpha}$-ball generated by the random vector
  $\eta^\alpha$. 
\end{proof}

\begin{remark}
  \label{rem:max}
  If the arithmetic sum operation on the right-hand side of
  \eqref{eq:25} is replaced by the coordinatewise maximum, $\xi$ is
  said to be \emph{max-stable}. In this case, $\alpha$ can take an
  arbitrary positive value. It is shown in \cite{mo08e} that the
  cumulative distribution functions of max-stable random vectors with
  $\alpha=1$ are characterised as
  \begin{displaymath}
    \P\{\xi\leq u\}=\exp\{-\|u^{-1}\|_L\},\quad u\in(0,\infty)^n,
  \end{displaymath}
  for a $D_\infty$-ball $L$.
  The polar set $L^\circ$ is a
  diagonal body generated by the $\ell_1$-ball in $\R^n$; such convex
  bodies $L^\circ$ were called \emph{max-zonoids} in~\cite{mo08e}.
\end{remark}

Theorem~\ref{thr:one-sided} is supplemented by the following existence result.

\begin{lemma}
For each $D_{1/\alpha}$-ball $L$ with $\alpha\in(0,1]$ there is a strictly $\alpha$-stable random vector $\xi$ in $\R_+^n$ such that the Laplace transform of $\xi$ is given by~\eqref{eq:23}.
\end{lemma}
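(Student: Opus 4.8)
The plan is to realise the exponent $u\mapsto\|u^\alpha\|_L$ in the form $\int_{\Splus}\langle u,v\rangle^\alpha\sigma(dv)$ for a suitable \emph{finite} measure $\sigma$ on $\Splus$, and then to read off $\xi$ from the characterisation of one-sided strictly $\alpha$-stable laws behind \eqref{eq:22}. Since $D_{1/\alpha}$-balls are unconditional, the remark following Definition~\ref{def:lp1} allows us to take the spectral measure $\mu$ of $L$ to be supported by $\Splus$. Substituting $u^\alpha$ in place of $u$ in \eqref{eq:28a} with $p=1/\alpha$, and using that for non-negative $u,v$ one has $\|u^\alpha v\|_{1/\alpha}=\bigl(\sum_i(u_i^\alpha v_i)^{1/\alpha}\bigr)^{\alpha}=\bigl(\sum_i u_i v_i^{1/\alpha}\bigr)^{\alpha}=\langle u,v^{1/\alpha}\rangle^\alpha$, I obtain
\[
  \|u^\alpha\|_L=\int_{\Splus}\langle u,v^{1/\alpha}\rangle^\alpha\,\mu(dv),\qquad u\in\R_+^n .
\]

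The next step is to renormalise the integrand so that it runs over $\Splus$. For $v\in\Splus$ set $r(v)=\|v^{1/\alpha}\|_2$, which is positive because $v\neq0$, and $w(v)=v^{1/\alpha}/r(v)\in\Splus$, so that $\langle u,v^{1/\alpha}\rangle^\alpha=r(v)^\alpha\langle u,w(v)\rangle^\alpha$. Let $\sigma$ be the image of the measure $r(v)^\alpha\,\mu(dv)$ on $\Splus$ under the map $v\mapsto w(v)$. The crucial point, and the only place where the restriction $\alpha\leq1$ is genuinely used, is that $\sigma$ is finite: for $v\in\Splus$ every coordinate lies in $[0,1]$ and $2/\alpha\geq2$, hence $v_i^{2/\alpha}\leq v_i^2$, so $r(v)^2=\sum_i v_i^{2/\alpha}\leq\sum_i v_i^2=1$ and thus $r(v)^\alpha\leq1$; consequently $\sigma(\Splus)=\int_{\Splus}r(v)^\alpha\,\mu(dv)\leq\mu(\Splus)<\infty$. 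By construction $\|u^\alpha\|_L=\int_{\Splus}\langle u,v\rangle^\alpha\,\sigma(dv)$ for all $u\in\R_+^n$.

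It remains to produce a random vector with this Laplace exponent. For $\alpha\in(0,1)$, the function $u\mapsto\int_{\Splus}\langle u,v\rangle^\alpha\sigma(dv)$ admits a Lévy--Khintchine representation on the semigroup $\R_+^n$ (write $s^\alpha=c_\alpha\int_0^\infty(1-\RMe^{-st})t^{-1-\alpha}\,dt$ and note that $\int_0^\infty\min(1,t)t^{-1-\alpha}\,dt<\infty$ since $\alpha<1$, while $\sigma$ is finite), so $\exp\{-\|u^\alpha\|_L\}$ is the Laplace transform of some random vector $\xi$ in $\R_+^n$; this is precisely the converse direction of \eqref{eq:22}, available from \cite{dav:mol:zuy08}. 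Replacing $u$ by $t^{1/\alpha}u$ shows that the Laplace transform of $t^{1/\alpha}\xi$ equals $\exp\{-t\|u^\alpha\|_L\}$, which yields the defining relation \eqref{eq:25}, so $\xi$ is strictly $\alpha$-stable and satisfies \eqref{eq:23} by construction. For $\alpha=1$ the $D_1$-ball $L$ equals $w^{-1}B_1$ for some $w\in\R_+^n$, hence $\|u\|_L=\langle w,u\rangle$ on $\R_+^n$ and one simply takes the deterministic vector $\xi\equiv w$. I expect the remaining work to be routine bookkeeping; the only slightly delicate point is the degenerate situation in which $\mu$ charges the relative boundary of $\Splus$ (some coordinates of $v$ vanishing, so that $L$ is unbounded in those directions), where one must check that $w(v)$ is still well defined and the pushforward and the identity for $\|u^\alpha v\|_{1/\alpha}$ remain valid — all of which holds because $v\neq0$ throughout $\Splus$.
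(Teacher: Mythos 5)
Your proposal is correct, and it diverges from the paper's proof in the one step that carries the real weight. The reduction is essentially the same in both arguments: you rewrite $\|u^\alpha\|_L=\int_{\Splus}\langle u,v^{1/\alpha}\rangle^\alpha\mu(dv)$ and push the spectral measure forward under $v\mapsto v^{1/\alpha}/\|v^{1/\alpha}\|_2$ with weight $\|v^{1/\alpha}\|_2^\alpha$ to get a finite $\sigma$ on $\Splus$ with $\|u^\alpha\|_L=\int_{\Splus}\langle u,v\rangle^\alpha\sigma(dv)$; the paper performs exactly this normalisation probabilistically, via a random vector $\eta$ with $\E\|u\hat\eta\|_{1/\alpha}=\|u\|_L$ and $\sigma(A)=\E(\|\eta\|^\alpha\one_{\eta/\|\eta\|\in A})$ (your explicit bound $r(v)^\alpha\le 1$ is not even needed, since $r$ is bounded on $\Splus$ and $\mu$ is finite, but it does no harm). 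Where you genuinely differ is the existence of $\xi$ with exponent $\int_{\Splus}\langle u,v\rangle^\alpha\sigma(dv)$: the paper constructs it concretely, approximating $\sigma$ by discrete measures $\sigma_m$, summing independent completely dependent one-sided $\alpha$-stable vectors with exponents $a_{mj}\langle u,v_{mj}\rangle^\alpha$, and passing to the weak limit via pointwise convergence of Laplace transforms (Kallenberg, Th.~5.22); you instead exhibit the exponent in L\'evy--Khintchine form, with L\'evy measure the image of $c_\alpha t^{-1-\alpha}\mrmd t\otimes\sigma(\mrmdd v)$ under $(t,v)\mapsto tv$, check the integrability of $\min(1,t)t^{-1-\alpha}$ for $\alpha<1$, and appeal to the general theory of infinitely divisible (subordinator-type) laws on $\R_+^n$, i.e.\ the converse direction of \eqref{eq:22} as in \cite{dav:mol:zuy08}. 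Your route is shorter but uses that converse as a black box, whereas the paper's approximation argument is self-contained up to a standard Laplace-transform convergence theorem; both then obtain strict stability by the same scaling $u\mapsto t^{1/\alpha}u$, which is the sufficiency argument of Theorem~\ref{thr:one-sided}, and both dispose of $\alpha=1$ by the deterministic case.
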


\begin{proof}
We may assume $\alpha \in (0,1)$, the case $\alpha = 1$ being obvious. Let $\hat{\eta}$ be a random vector in $\R_+^n$ such that $\E \|u \hat{\eta}\|_{1/\alpha}= \|u\|_L$ for $u\in\R_+^n$. Define  $\eta = \hat{\eta}^{1/\alpha}$. Then
\begin{displaymath}
 \|u^\alpha\|_L = \E \|u^\alpha \eta^\alpha\|_{1/\alpha}
 = \E(\langle u,\eta\rangle)^\alpha,
\quad u\in\R_+^n.
\end{displaymath}
Define a finite measure $\sigma(A)=\E(\|\eta\|^\alpha \, \one_{\eta/\|\eta\|\in A})$ for all Borel $A\subset\Splus$. Then  
\begin{equation}
\label{eq:laplace}
    \int_{\Splus} \langle
    u,v\rangle^\alpha\sigma(dv)=\E(\langle u,\eta\rangle)^\alpha, \quad u\in\R_+^n.
\end{equation}

Thus it is enough to show that for each $\alpha \in (0,1)$ and each
finite measure $\sigma$ there is a one-sided strictly $\alpha$-stable
random vector $\xi$ such that \eqref{eq:22} holds.  Choose a sequence
of measures $\sigma_m = \sum_{j=1}^{k_m} a_{mj} \delta_{v_{mj}}$ with
$a_{mj} > 0$, $v_{mj} \in \Splus$, $j = 1,\ldots,k_m$, and $k_m \to
\infty$, such that $\sigma_m \to \sigma$ weakly as $m \to \infty$. For
each $m$ and $j$ define an independent $\alpha$-stable random vector
$\xi_{mj}$ in $\R_+^n$ with completely dependend components having
Laplace transform $\exp (-a_{mj} \langle u, v_{mj}
\rangle^\alpha)$. Clearly the Laplace transform of the sum $\chi_m =
\sum_{j=1}^{k_m} \xi_{mj}$ is
\begin{displaymath}
  \E e^{-\langle u,\chi_m\rangle}=
  \exp \left\{-\int_{\Splus} \langle u,v \rangle^\alpha \sigma_m(dv)\right\}, \quad u\in\R_+^n.
\end{displaymath}
By the pointwise convergence of Laplace transforms, the limit in
distribution of $\chi_m$ exists and has the Laplace
transform~\eqref{eq:laplace}, see for example \cite[Th.~5.22]{kalle}.
From its Laplace transform it is also obvious that the limit of
$\chi_m$ is strictly $\alpha$-stable.
\end{proof}

\begin{example}[Completely dependent components]
  \label{ex:sp-c-dep}
  If the spectral measure is the Dirac measure at $v\in\R_+^n$, the
  corresponding $D_p$-ball is $v^{-1}B_p$ for $p\in[1,\infty)$.  The
  corresponding one-sided $\alpha$-stable random vector with
  $\alpha=1/p$ satisfies
  \begin{displaymath}
    \E \RMe^{-\langle\xi,u\rangle}=\RMe^{-\|vu^\alpha\|_p}
    =\exp\Big\{-(u_1v_1^{1/\alpha}+\cdots
    +u_nv_n^{1/\alpha})^{\alpha}\Big\},
    \quad u\in\R_+^n.
  \end{displaymath}
  This is the Laplace transform of the random vector $\zeta
  v^{1/\alpha}$ obtained by scaling all components of $v^{1/\alpha}$
  with a one-sided strictly $\alpha$-stable random variable $\zeta$.
  In other words, the components of $\xi$ are completely dependent.
\end{example}  

\begin{example}[Independent components]
  \label{ex:sp-c-indep}
  If the components of one-sided strictly stable $\xi$ are
  independent, then
  \begin{displaymath}
    \E \RMe^{-\langle\xi,u\rangle}
    =\exp\Big\{-(v_1 u_1^\alpha+\cdots+v_n u_n^\alpha)\Big\}
    =\exp\big\{-\|u^\alpha\|_{v^{-1} B_1}\big\},
    \quad u\in\R_+^n,
  \end{displaymath}
  for some $v\in\R^n_+$. It follows that $B_1$ and thus any $D_1$-ball 
  is a $D_p$-ball for each $p\in(1,\infty)$. 
\end{example}

For a closed convex set $L$ and $\beta>0$, define its signed $\beta$-power by 
\begin{displaymath}
  L^{\langle \beta\rangle}=\{x^{\langle \beta\rangle}:\; x\in L\},
\end{displaymath}
where $x^{\langle \beta\rangle}$ is the vector composed of the signed
powers of the components of $x$. 

\begin{lemma}
  \label{lemm:sc}
  If $L$ is a $D_p$-ball with $p\in[1,\infty)$ and $\beta\in(0,1)$,
  then $L^{\langle \beta\rangle}$ is a $D_{p/\beta}$-ball.
\end{lemma}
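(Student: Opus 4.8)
The plan is to pass through one-sided stable laws. By Theorem~\ref{thr:one-sided} (and the existence lemma following it), $L$ is attached to a one-sided strictly $(1/p)$-stable vector; subordinating this vector by an independent one-sided $\beta$-stable positive random variable produces a one-sided strictly $(\beta/p)$-stable vector, and Theorem~\ref{thr:one-sided} then supplies the associated $D_{p/\beta}$-ball, which I shall identify with $L^{\langle\beta\rangle}$. Throughout we may assume $L\neq\R^n$, since otherwise $L^{\langle\beta\rangle}=\R^n$ is trivially a $D_{p/\beta}$-ball (with vanishing spectral measure).

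First I would put $\alpha=1/p\in(0,1]$ and, by the existence lemma after Theorem~\ref{thr:one-sided}, choose a one-sided strictly $\alpha$-stable random vector $\xi$ in $\R_+^n$ with $\E\RMe^{-\langle\xi,u\rangle}=\exp\{-\psi(u)\}$, where $\psi(u)=\|u^\alpha\|_L$ for $u\in\R_+^n$. Since the Minkowski functional is positively homogeneous, $\psi(cu)=\|c^\alpha u^\alpha\|_L=c^\alpha\psi(u)$ for $c>0$, i.e.\ $\psi$ is $\alpha$-homogeneous. Next, let $\zeta\geq0$ be a one-sided strictly $\beta$-stable random variable independent of $\xi$ with $\E\RMe^{-s\zeta}=\RMe^{-s^\beta}$, $s\geq0$, and set $\chi=\zeta^{1/\alpha}\xi\in\R_+^n$. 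Conditioning on $\zeta$ and using the $\alpha$-homogeneity of $\psi$ yields
\begin{displaymath}
  \E\RMe^{-\langle\chi,u\rangle}
  =\E_\zeta\RMe^{-\psi(\zeta^{1/\alpha}u)}
  =\E_\zeta\RMe^{-\zeta\psi(u)}
  =\RMe^{-\psi(u)^\beta},\quad u\in\R_+^n.
\end{displaymath}
Since $\psi^\beta$ is $(\alpha\beta)$-homogeneous, this form of the Laplace transform shows that $\chi$ is a non-trivial one-sided strictly $(\alpha\beta)$-stable vector in $\R_+^n$, with $\alpha\beta\in(0,1)$.

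Then, applying Theorem~\ref{thr:one-sided} to $\chi$, there is a $D_{1/(\alpha\beta)}$-ball $M$ with $\E\RMe^{-\langle\chi,u\rangle}=\exp\{-\|u^{\alpha\beta}\|_M\}$; comparing with the previous display gives $\|u^{\alpha\beta}\|_M=\|u^\alpha\|_L^\beta$ for all $u\in\R_+^n$, and $1/(\alpha\beta)=p/\beta$. It remains to identify $M$ with $L^{\langle\beta\rangle}$. Both sets are unconditional: $M$ because it is a $D_{p/\beta}$-ball, and $L^{\langle\beta\rangle}$ because $(sx)^{\langle\beta\rangle}=s\,x^{\langle\beta\rangle}$ for $s\in\{-1,1\}^n$ while $L$ is unconditional. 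Hence it suffices to compare them on $\R_+^n$. For $x\in\R_+^n$ write $x=u^{\alpha\beta}$ with $u=x^{1/(\alpha\beta)}\in\R_+^n$; then $u^\alpha=x^{1/\beta}$, so $\|x\|_M=\|u^\alpha\|_L^\beta=\|x^{1/\beta}\|_L^\beta$ and $x\in M$ iff $x^{1/\beta}\in L$. On the other hand $x\in L^{\langle\beta\rangle}$ iff $x^{\langle1/\beta\rangle}\in L$, and $x^{\langle1/\beta\rangle}=x^{1/\beta}$ on $\R_+^n$. Thus $M\cap\R_+^n=L^{\langle\beta\rangle}\cap\R_+^n$, whence $M=L^{\langle\beta\rangle}$ and $L^{\langle\beta\rangle}$ is a $D_{p/\beta}$-ball.

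The main obstacle, and the reason one cannot argue directly on Minkowski functionals, is that the natural identity $\|u\|_{L^{\langle\beta\rangle}}=\|u^{1/\beta}\|_L^\beta$ carries a $\beta$-th power outside the integral defining $\|\cdot\|_L$, which does not pass under the spectral measure; subordination is precisely the device that turns the operation $\psi\mapsto\psi^\beta$ on Laplace exponents into a valid spectral representation, i.e.\ into membership in the $D$-ball family. As a bonus it yields the convexity and closedness of $L^{\langle\beta\rangle}$ for free, since $L^{\langle\beta\rangle}$ is shown to equal the convex body $M$. The only remaining cases are $L=\R^n$ (handled above) and unbounded $L$, where the same argument applies verbatim, $\chi$ being degenerate in the relevant coordinates but still non-trivial.
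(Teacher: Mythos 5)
Your proposal is correct and follows essentially the same route as the paper: it realises $L$ via a one-sided strictly $(1/p)$-stable vector, subordinates by an independent one-sided $\beta$-stable variable to obtain a strictly $(\beta/p)$-stable vector, and reads off the associated $D_{p/\beta}$-ball, identifying it with $L^{\langle\beta\rangle}$. The only cosmetic difference is that the paper identifies the two sets by the one-line Minkowski functional computation $\|u\|_{L'}=\|u^{1/\beta}\|_L^\beta=\|u\|_{L^{\langle\beta\rangle}}$, whereas you compare memberships on $\R_+^n$ and invoke unconditionality, which amounts to the same calculation.
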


\begin{proof}
  Let $\xi$ be one-sided stable in $\R_+^n$ with characteristic exponent $\alpha = 1/p$ such that \eqref{eq:23} holds. Further let $\zeta$ be a one-sided stable random variable with
  characteristic exponent $\beta\in(0,1)$ and Laplace transform
  $\exp(-s^\beta)$, $s \geq 0$. Then $\zeta^{1/\alpha}\xi$ is also
  one-sided stable (also called sub-stable, see \cite{sam:taq94}) with
  characteristic exponent $\alpha\beta$, and
  \begin{displaymath}
    \E \RMe^{-\langle \zeta^{1/\alpha}\xi,u\rangle} = \exp\{-\|u^\alpha\|_L^\beta\}
    =\exp\{-\|u^{\alpha\beta}\|_{L'}\}\,,
  \end{displaymath}
  where $L'$ correspond to $\zeta^{1/\alpha}\xi$, i.e.\ $L'$ is a $D_{p'}$-ball with $p'=1/(\alpha\beta)=p/\beta$.
  Then
  \begin{align*}
    \|u\|_{L'} = \|u^{1/\beta}\|_L^\beta
    =\inf\{s> 0:\; u^{1/\beta}/s\in L\}^\beta
    &=\inf\{t> 0:\; (u/t)^{1/\beta}\in L\}\\
    &=\|u\|_{L^{\langle\beta\rangle}}. 
 \end{align*}
 It follows that $L'=L^{\langle \beta\rangle}$.
\end{proof}

\begin{example}
  \label{ex:lpball}
  If $B_p$ is the $\ell_p$-ball and so is a $D_p$-ball, then
  $B_p^{\langle p/r\rangle}=B_r$. For $r>p$, this corresponds to the
  conclusion of Lemma~\ref{lemm:sc}.
\end{example}

\begin{lemma}
  \label{lemma:dimension}
  Let $L$ be a symmetric convex body, let $\zeta$ be an integrable
  random vector in $\R^n$, and define $K=\E(\zeta L)$. For $j \in
  \{1,\ldots,n\}$, if the support set $F(K,e_j)$ is a singleton, then
  $\P(\zeta_j=0)=0$. The converse implication holds if $F(L,e_j)$ is a
  singleton.
\end{lemma}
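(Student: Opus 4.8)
The plan is to compute the support function $h(K,e_j)$ and relate the shape of the support set $F(K,e_j)$ to whether $\zeta_j$ vanishes with positive probability. Recall that for the expectation $K = \E(\zeta L)$ of the random convex body $\zeta L$, the support function is $h(K,u) = \E h(\zeta L, u) = \E h(L, \zeta u)$, since multiplication by the diagonal matrix $\mathrm{diag}(\zeta)$ is self-adjoint. In particular, $h(K,e_j) = \E h(L, \zeta_j e_j) = \E\big(|\zeta_j|\, h(L, \sign(\zeta_j) e_j)\big)$. The key point is that $F(K,e_j)$ being a singleton is equivalent to the support function $h(K,\cdot)$ being differentiable at $e_j$, by \cite[Th.~1.7.2, Cor.~1.7.3]{schn2}.

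First I would treat the forward implication. Suppose $\P(\zeta_j = 0) = q > 0$. On the event $\{\zeta_j = 0\}$ the random body $\zeta L$ lies in the coordinate hyperplane $\{x_j = 0\}$, so its support set in direction $e_j$ is all of $\zeta L$ restricted to that hyperplane — in particular $F(\zeta L, e_j)$ typically is not a singleton, but more to the point one should look at selections. The cleaner route: write $K = \E(\zeta L \one_{\zeta_j = 0}) + \E(\zeta L \one_{\zeta_j \neq 0}) =: qK' + K''$, where $K' = \E(\zeta L \mid \zeta_j = 0)$ lies in $\{x_j = 0\}$. Then $F(K,e_j) = qF(K',e_j) + F(K'',e_j)$. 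Since $K'$ is a nondegenerate (in general) symmetric convex body inside the hyperplane $\{x_j=0\}$ and $e_j$ is orthogonal to that hyperplane, $F(K',e_j) = K'$, which is not a singleton unless $K'=\{0\}$, i.e.\ unless $L$ itself is a single point on the axis. Ruling that trivial degeneracy out (or noting that then $F(K,e_j)$ need not be a singleton anyway since then $K$ is a segment), we get that $F(K,e_j)$ contains a translate of the nondegenerate set $K'$, hence is not a singleton. This proves the contrapositive of the forward implication.

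For the converse, assume $\P(\zeta_j = 0) = 0$ and $F(L, e_j)$ is a singleton, say $F(L, e_j) = \{a^+\}$ and by symmetry $F(L, -e_j) = \{-a^+\} = F(L,e_j)$ reflected; actually for the argument I need both $F(L,e_j)$ and $F(L,-e_j)$ to be singletons, which follows from $F(L,e_j)$ being one by central symmetry of $L$. The idea is to show $h(K,\cdot)$ is differentiable at $e_j$. Write $h(K,u) = \E h(L, \zeta u)$. For $u$ near $e_j$, on the event $\{\zeta_j > 0\}$ the argument $\zeta u$ has positive $j$-th coordinate and the other coordinates controlled, so $h(L,\zeta u)$ is differentiable in $u$ near $e_j$ with gradient $\mathrm{diag}(\zeta)\, \nabla_w h(L,w)|_{w=\zeta e_j}$ whenever $h(L,\cdot)$ is differentiable at $\zeta e_j = \zeta_j e_j$, which holds precisely because $F(L, e_j)$ (equivalently $F(L, \zeta_j e_j)$ for $\zeta_j>0$) is a singleton; similarly on $\{\zeta_j<0\}$ using that $F(L,-e_j)$ is a singleton; and $\{\zeta_j = 0\}$ has probability zero so contributes nothing. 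Then, since $h(L,\cdot)$ is Lipschitz (bounded body) and $|\zeta|$ is integrable, dominated convergence lets us differentiate under the expectation, giving that $\nabla h(K,e_j)$ exists, hence $F(K,e_j)$ is a singleton.

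The main obstacle I anticipate is handling the behaviour near the event $\{\zeta_j = 0\}$ carefully in the converse direction: even though this event has probability zero, for $\omega$ with $\zeta_j(\omega)$ very small the function $u \mapsto h(L, \zeta(\omega) u)$ may have a ``corner'' whose location approaches $e_j$, so one must verify that the one-sided directional derivatives of $h(K,\cdot)$ at $e_j$ from the two sides along the $e_j$-axis actually agree. This is where $\P(\zeta_j = 0)=0$ is used quantitatively: the contribution of $\{|\zeta_j| \leq \delta\}$ to the difference of one-sided derivatives is bounded by a constant times $\E(|\zeta_j| \one_{|\zeta_j|\le\delta}) + \E(\|\zeta\|\one_{|\zeta_j|\le\delta})$, which tends to $0$ as $\delta \to 0$ by integrability and dominated convergence. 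Once that estimate is in place, the right and left derivatives of $h(K,\cdot)$ at $e_j$ coincide, and since $h(K,\cdot)$ is convex this forces full differentiability, completing the proof.
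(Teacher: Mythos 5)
Your proof of the converse is essentially the paper's own argument: compute the one-sided directional derivatives $h_K'(e_j;x)$ by passing the limit of difference quotients inside the expectation (dominated convergence with the Lipschitz bound $C\|\zeta\|\,\|x\|$), use $\P(\zeta_j=0)=0$ together with the fact that $F(L,e_j)$ (hence, by symmetry, $F(L,-e_j)$) is a singleton to see that the a.s.\ limit is linear in $x$, and conclude via \cite[Th.~1.7.2, Cor.~1.7.3]{schn2} that $F(K,e_j)$ is a singleton. The $\delta$-splitting around $\{|\zeta_j|\le\delta\}$ that you describe as the main obstacle is superfluous: dominated convergence is applied $\omega$-wise, and for each fixed $\omega$ with $\zeta_j(\omega)\neq0$ the directional derivative of $u\mapsto h(L,\zeta(\omega)u)$ at $e_j$ is linear no matter how small $|\zeta_j(\omega)|$ is; no uniformity over $\omega$ is required (your estimate is correct, just not needed).

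For the forward implication, which the paper dismisses as clear, your decomposition $K=qK'+K''$ with $K'\subset\{x_j=0\}$ and $F(K,e_j)=qK'+F(K'',e_j)$ is the natural argument, but your handling of the degenerate case is inaccurate. The case $K'=\{0\}$ does not mean that $L$ is a single point on an axis; it occurs exactly when $\zeta L=\{0\}$ a.s.\ on $\{\zeta_j=0\}$, for instance whenever $\zeta$ vanishes identically on that event. In that situation $F(K,e_j)$ can perfectly well be a singleton even though $\P(\zeta_j=0)>0$: take $L=B_2$ and $\zeta=0$ with probability $1/2$, $\zeta=(1,\dots,1)$ with probability $1/2$, so that $K=\tfrac12 B_2$ — contrary to your parenthetical claim that $K$ would then be a segment with a non-singleton support set. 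This edge case is really a blemish of the lemma as stated (implicitly one excludes $\zeta$ vanishing on $\{\zeta_j=0\}$, which is harmless in the application where $\zeta$ is carried by a scaled sphere), so it is not something your decomposition could repair; but the sentence with which you dismiss it is wrong as written and should either be corrected or replaced by an explicit nondegeneracy assumption.
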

\begin{proof}
  The first statement is clear. Now assume that $F(L,e_j) = \{ y \}$
  for some $j \in \{1,\ldots,n\}$ and $y \in L$ and that
  $\P(\zeta_j=0)=0$. Then the derivative of $h(L,u)$ at $e_j$ in
  direction $x \in \R^n$ is $h_L'(e_j ; x) = h(F(L,e_j),x) = \langle
  y, x \rangle$, see \cite[Th.~1.7.2]{schn2}. Then
  \begin{align*}
    h_K'(e_j ; x)
    & =  \lim_{\varepsilon \downarrow 0} \E h(\zeta L, e_j + \varepsilon x)
    =  \lim_{\varepsilon \downarrow 0} \E h(L, \zeta_j e_j + \varepsilon \zeta x)\\
    & =  \lim_{\varepsilon \downarrow 0} \E \left(|\zeta_j| \, 
      h \left(L, e_j + \varepsilon \zeta_j^{-1} \zeta x \right)\right)\\
    & =  \E \left(|\zeta_j| \, h_L' \left(e_j ; \zeta_j^{-1} \zeta x \right)\right)
    =  \langle y \, \E \left(\zeta \sign(\zeta_j)\right), x \rangle,
  \end{align*}
  confirming the linearity of $h_K'(e_j ; x)$ in $x$, and so the
  second claim by \cite[Cor.~1.7.3]{schn2}. 
\end{proof}

\begin{theorem}
  \label{th:drball}
  If $L$ is a $D_p$-ball for some $p \in [1,\infty)$, then $L$ is a
  $D_r$-ball for $r \in (p,\infty]$.
\end{theorem}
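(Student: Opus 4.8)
The plan is to prove the statement in three steps: first for the single $\ell_p$-ball $B_p$, then for an arbitrary $D_p$-ball $L$ and finite $r$, and finally for $r=\infty$ by a compactness argument.

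\emph{Step 1.} I would first show that $B_p$ is a $D_r$-ball for every $r\in(p,\infty)$. For $p=1$ this is precisely Example~\ref{ex:sp-c-indep}. For $p>1$ and $r\in(p,\infty)$, note that $B_1$ is a $D_{r/p}$-ball by Example~\ref{ex:sp-c-indep} (since $r/p>1$), so Lemma~\ref{lemm:sc} applied with $\beta=1/p\in(0,1)$ gives that $B_1^{\langle 1/p\rangle}$ is a $D_{(r/p)/(1/p)}=D_r$-ball; since $B_1^{\langle 1/p\rangle}=B_p$ by the elementary power identity of Example~\ref{ex:lpball}, this proves the claim.

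\emph{Step 2.} For a general $D_p$-ball $L$ and $r\in(p,\infty)$, I would use Definition~\ref{def:lp1} to write $\|u\|_L=\E\|u\eta\|_p$ for an integrable random vector $\eta$ in $\R_+^n$, and use Step~1 to write $\|x\|_p=\E\|x\theta\|_r$ for all $x\in\R^n$, where $\theta$ is an integrable random vector in $\R_+^n$ taken independent of $\eta$. Applying the second identity with $x=u\eta$ and conditioning on $\eta$ gives $\E\bigl(\|u\eta\theta\|_r\mid\eta\bigr)=\|u\eta\|_p$ almost surely, hence $\E\|u\eta\theta\|_r=\|u\|_L$ for all $u\in\R^n$. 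Since $\eta\theta$ is again integrable (for instance $\|\eta\theta\|_2\le\|\eta\|_2\,\|\theta\|_\infty$), the random vector $\eta\theta$ witnesses $L$ as a $D_r$-ball.

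\emph{Step 3.} For $r=\infty$ I would pass to a limit. Step~2 provides, for each $r\in(p,\infty)$, a finite measure $\Lambda_r$ on $\Splus$ with $\|u\|_L=\int_{\Splus}\|uw\|_r\,\Lambda_r(dw)$ for all $u\in\R^n$. Taking $u=e_i$ and summing over $i$ yields $\sum_{i=1}^n\|e_i\|_L=\int_{\Splus}\|w\|_1\,\Lambda_r(dw)\ge\Lambda_r(\Splus)$, because $\|w\|_1\ge\|w\|_2=1$ on $\Splus$; thus $\sup_{r}\Lambda_r(\Splus)<\infty$. By the weak-$*$ compactness of uniformly bounded measures on the compact space $\Splus$, some sequence $r_k\to\infty$ admits a weak limit $\Lambda_\infty$, and since $\|uw\|_r\to\|uw\|_\infty$ uniformly in $w\in\Splus$ as $r\to\infty$ (from $\|x\|_\infty\le\|x\|_r\le n^{1/r}\|x\|_\infty$), one may pass to the limit in the integral to obtain $\|u\|_L=\int_{\Splus}\|uw\|_\infty\,\Lambda_\infty(dw)$, so $L$ is a $D_\infty$-ball. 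The integrability bookkeeping and the identification of the measure generated by a random vector are routine; the one place demanding care is this last step, where the weak limit of the $\Lambda_r$ must be combined with the uniform convergence $\|uw\|_r\to\|uw\|_\infty$ over the compact set of directions, which is exactly why the uniform mass bound $\Lambda_r(\Splus)\le\sum_i\|e_i\|_L$ is essential. The real content of the argument sits in Lemma~\ref{lemm:sc} and the independent-components computation of Example~\ref{ex:sp-c-indep}.
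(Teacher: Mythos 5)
Your proof is correct and follows essentially the same route as the paper: Example~\ref{ex:sp-c-indep} combined with Lemma~\ref{lemm:sc} settles the case of finite $r$ via $B_p=B_1^{\langle 1/p\rangle}$, and the case $r=\infty$ is handled by a compactness argument in which your weak-$*$ limit of the spectral measures $\Lambda_r$ is just the deterministic counterpart of the paper's tightness/almost-sure-subsequence argument with bounded convergence. Your Step~2, which justifies the reduction from a general $D_p$-ball to $B_p$ by composing the two representing random vectors, makes explicit a reduction the paper only asserts, but it does not change the approach.
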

\begin{proof}
  First consider $r < \infty$. It suffices to prove the claim for
  $L=B_p$. The case $p = 1$ is shown in
  Example~\ref{ex:sp-c-indep}. If $p > 1$, then $B_p=B_1^{\langle
    1/p\rangle}$. Since $B_1$ is a $D_s$-ball for $s>1$, $B_p$ is a
  $D_{sp}$-ball by Lemma~\ref{lemm:sc}.

  Now consider $r = \infty$. Since $L$ is a $D_s$-ball for all
  $s\in[p,\infty)$, for each $s \in [p,\infty)$ there is a finite
  measure $\mu_s$ on $\SS_+^{n-1}$ such that
  \begin{displaymath}
    \|u\|_L =\int_{\SS_+^{n-1}} \|uv\|_s\, \mu_s(dv),\quad u\in\R^n.
  \end{displaymath}
  We have $\|v\|_s \geq \|v\|_\infty > c$ for $v \in \SS_+^{n-1}$ and
  some $c>0$. Thus
  \begin{displaymath}
    \|(1,\ldots,1)\|_L = \int_{\SS_+^{n-1}} \|v\|_s\, \mu_s(dv)
    \geq c \mu_s(\SS_+^{n-1}).
  \end{displaymath}
  This shows that $\{ \mu_s(\SS_+^{n-1}) : s \in [p,\infty)\}$ is
  bounded. Now choose a sequence of numbers $r_k \to \infty$, $k \geq
  1$, and random vectors $\xi^{(k)} = \mu_{r_k}(\SS_+^{n-1})
  \eta^{(k)}$ where $\eta^{(k)}$ is distributed as
  normalised~$\mu_{r_k}$. Then $\{\xi^{(k)}, k \geq 1\}$ is a tight
  sequence, and
  \begin{displaymath}
    \|u\|_L = \E \|u\xi^{(k)}\|_{r_k},\quad u\in\R^n.
  \end{displaymath}
  Without loss of generality assume that $\xi^{(k)} \to \xi$ almost
  surely as $k \to \infty$ for some random vector~$\xi$. By bounded
  convergence $\E \|u\xi^{(k)}\|_{r_k} \to \E \|u\xi\|_\infty$ for
  $u\in\R^n$. Thus, $L$ is a $D_\infty$-ball.
\end{proof}

\begin{corollary}
  \label{co:ballzon}
  If $L$ is a $D_p$-ball for some $p\in[1,2]$, then $L^\circ$ is a zonoid.
\end{corollary}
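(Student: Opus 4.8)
The plan is to combine Theorem~\ref{th:drball} with the classical fact that the Euclidean ball is a zonoid. Since $p\in[1,2]$, Theorem~\ref{th:drball} shows that $L$ is a $D_2$-ball, so $\|u\|_L=\int_{\Sphere}\|uv\|_2\,\mu(dv)$ for some finite measure $\mu$ on $\Sphere$ (equivalently $\|u\|_L=\E\|u\eta\|_2$ for an integrable random vector $\eta$ in $\R_+^n$). As recalled in Section~\ref{sec:d_p-balls}, the polar body then satisfies
\begin{displaymath}
  h(L^\circ,u)=\int_{\Sphere} h(vB_2,u)\,\mu(dv)=\E h(\eta B_2,u),\quad u\in\R^n,
\end{displaymath}
i.e.\ $L^\circ$ is a diagonal body generated by the Euclidean ball $B_2$.

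It then suffices to show that the diagonal Minkowski class generated by a zonoid consists of zonoids. Writing $h(B_2,w)=\int_{\Sphere}|\langle w,t\rangle|\,\rho(dt)$ for a finite even measure $\rho$ (the ball $B_2$ being a zonoid; see e.g.\ \cite[Sec.~3.5]{schn2}), we obtain $h(vB_2,u)=h(B_2,vu)=\int_{\Sphere}|\langle u,vt\rangle|\,\rho(dt)$ and hence
\begin{displaymath}
  h(L^\circ,u)=\int_{\Sphere}\!\int_{\Sphere}|\langle u,vt\rangle|\,\rho(dt)\,\mu(dv),\quad u\in\R^n.
\end{displaymath}
Discarding the null set where $vt=0$ and writing $|\langle u,vt\rangle|=\|vt\|\,|\langle u,vt/\|vt\|\rangle|$, the right-hand side equals $\int_{\Sphere}|\langle u,w\rangle|\,\lambda(dw)$, where $\lambda$ is the image of the finite measure $\|vt\|\,\rho(dt)\,\mu(dv)$ under $(t,v)\mapsto vt/\|vt\|$; this $\lambda$ is finite, and it is even because $\rho$ is. Thus $h(L^\circ,\cdot)$ is the support function of a zonoid.

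The only ingredient not already in the paper is the classical statement that the Euclidean ball is a zonoid, which I would simply cite; all remaining steps form the short computation above. (Alternatively, one could invoke that the $\ell_q$-ball is a zonoid for every $q\in[2,\infty]$ and apply this with $1/p+1/q=1$, bypassing Theorem~\ref{th:drball}.) I do not anticipate any real difficulty.
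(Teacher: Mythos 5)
Your proposal is correct and takes essentially the same route as the paper: reduce to the case $p=2$ via Theorem~\ref{th:drball} and then use that the Euclidean ball $B_2$ is a zonoid, so that the polar of a $D_2$-ball, being a diagonal body generated by $B_2$, is a zonoid. The only difference is that you spell out the push-forward computation showing that the diagonal Minkowski class generated by a zonoid consists of zonoids, a step the paper's two-line proof leaves implicit.
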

\begin{proof}
  Since the Euclidean ball $B_2$ is a zonoid, the polar set of each
  $D_2$-ball is a zonoid. For $p\in[1,2)$ use Theorem~\ref{th:drball}.
\end{proof}

\subsection{Unconditional D-universality of $\ell_p$-balls}
\label{sec:uncond-d-univ}

Given that $\|u\xi\|_p=h(\xi B_q,u)$, the following results show that
$\ell_q$-balls are unconditionally D-universal for all
$q\in[1,\infty)$ that is, $\E \|u\xi\|_p$, $u\in\R^n$, identify the
distribution of $|\xi|$ up to zonoid equivalence. Recall that this is
not true for $q=\infty$, see Example~\ref{ex:1}. For $q\in[2,\infty)$,
$B_q$ is a zonoid whose support sets satisfy the conditions of
Theorem~\ref{thr:main}, whence $B_q$ is unconditionally
D-universal. However, $B_q$ is not necessarily a zonoid if
$n\geq 3$, $q\in[1,2)$, e.g.\ if $q$ is close to one. 

\begin{theorem}
  \label{th:maxzon}
  Let $K$ be a symmetric convex body such that all support sets
  $F(K,e_i)$, $i=1,\dots,n$, are singletons, and such that $K^\circ$
  is a $D_p$-ball for some $p\in(1,\infty]$. Then $K$ is
  unconditionally D-universal.
\end{theorem}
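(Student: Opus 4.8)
The plan is to peel the hypothesis \eqref{eq:12} down to an ordinary zonoid equivalence for ``Hadamard‑multiplied'' vectors and then to strip off the extra factor. Throughout write $q$ for the conjugate exponent of $p$, so $q\in[1,\infty)$.

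First I would reduce. Since $K^\circ$ is a $D_p$-ball it is unconditional, hence so is $K=(K^\circ)^\circ$, and by the opening reduction in the proof of Theorem~\ref{thr:equiv-pos} it suffices to prove the zonoid equivalence of $\xi$ and $\eta$ for integrable random vectors valued in $\R_+^n$, testing only against $u\in\R_+^n$. By Theorem~\ref{th:drball} the $D_p$-ball $K^\circ$ is also a $D_\infty$-ball, so there is a finite measure $\mu$ on $\SS_+^{n-1}$ with $h(K,w)=\int_{\SS_+^{n-1}}\|wv\|_\infty\,\mu(dv)=\E\max_i(w_i\eta_{0,i})$, where $\eta_0$ is an integrable random vector distributed proportionally to $\mu$ and independent of $\xi,\eta$; in other words $K=\E(\eta_0 B_1)$. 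The assumption that every $F(K,e_i)$ is a singleton then forces, by the first statement of Lemma~\ref{lemma:dimension}, that $\P(\eta_{0,i}=0)=0$ for all $i$, i.e.\ $\eta_0\in(0,\infty)^n$ almost surely. Using $h(uK,\xi)=h(K,u\xi)$ and independence, $\E h(uK,\xi)=\E\max_i(u_i\xi_i\eta_{0,i})=\E\max_i(u_i(\xi\eta_0)_i)$, and likewise for $\eta$, so \eqref{eq:12} becomes $\E\max_i(u_i(\xi\eta_0)_i)=\E\max_i(u_i(\eta\eta_0)_i)$ for all $u\in\R_+^n$. Theorem~\ref{thr:ws} with $\alpha=1$ turns this into the zonoid equivalence of $\xi\eta_0$ and $\eta\eta_0$.

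It remains to run the ``deconvolution'': from zonoid equivalence of $\xi\eta_0$ and $\eta\eta_0$, with $\eta_0$ an independent positive integrable factor, deduce zonoid equivalence of $\xi$ and $\eta$. Zonoid equivalence of vectors in $\R_+^n$ is equality of their lift measures on $\SS_+^{n-1}$, and these split according to the stratum $A_E$, $E\subseteq\{1,\dots,n\}$, of the support pattern. Because $\eta_0\in(0,\infty)^n$ a.s., Hadamard multiplication by $\eta_0$ maps each stratum into itself and $\{\xi\eta_0\in A_E\}=\{\xi\in A_E\}$, so $\mu_{\xi\eta_0}$ restricted to $\SS_+^{n-1}\cap A_E$ depends only on $\mu_\xi$ restricted there and on the law of $\eta_0$. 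Fixing $E$ of cardinality $m\geq 2$ (the strata with $m\leq 1$ being immediate, once one divides out the nonzero constant $\E\eta_{0,j}$), I would re-embed in $\R^m$, parametrise $\SS_+^{m-1}\cap(0,\infty)^m$ by the ratios $(\xi_1/\xi_m,\dots,\xi_{m-1}/\xi_m)$, and pass to componentwise logarithms: a direct computation shows that the lift measure of $\xi\eta_0$, after dividing out the common Jacobian factor $(1+|\tau|^2)^{1/2}$, becomes the convolution $a_\xi*a_{\eta_0}$ on $\R^{m-1}$, where $a_\xi(dl)=\E\left(\xi_m\,\one_{(\log(\xi_1/\xi_m),\dots,\log(\xi_{m-1}/\xi_m))\in dl}\right)$ and $a_{\eta_0}$ is defined analogously from $\eta_0$. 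Hence $(a_\xi-a_\eta)*a_{\eta_0}=0$. Now integrability of $\xi,\eta,\eta_0$ yields finite exponential moments, since $\int e^{l_j}\,a_\xi(dl)=\E\xi_j<\infty$ and $\int e^{l_j}\,a_{\eta_0}(dl)=\E\eta_{0,j}<\infty$, so the characteristic functions $\widehat{a_\xi-a_\eta}$ and $\widehat{a_{\eta_0}}$ extend to holomorphic functions on a common connected tube domain in $\C^{m-1}$, where their product vanishes identically; since $\widehat{a_{\eta_0}}(0)=\E\eta_{0,m}>0$, the identity theorem gives $\widehat{a_\xi-a_\eta}\equiv 0$ and therefore $a_\xi=a_\eta$. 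Collecting the strata yields $\mu_\xi=\mu_\eta$, i.e.\ $\xi$ and $\eta$ are zonoid equivalent.

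I expect the deconvolution step to be the delicate part: one must verify carefully that the lift measures really do become honest convolutions after the logarithmic change of variables (this is precisely where strict positivity of $\eta_0$ — forced by the single-point support sets, hence by $p>1$ — and the integrability hypotheses are used), and that the holomorphic-extension and identity-theorem argument is carried out correctly in several complex variables. The remaining ingredients — unconditionality, the passage to a $D_\infty$-ball via Theorem~\ref{th:drball}, and the application of Theorem~\ref{thr:ws} — I expect to be routine. As an alternative to Theorem~\ref{th:drball} one may instead encode $\E h(uK,\xi)$ through the Laplace transform of a one-sided strictly $(1/p)$-stable random vector and apply Theorem~\ref{thr:ws} with exponent $1/p$; this is the route suggested by Section~\ref{sec:scaled-ell_p-balls} but leads to exactly the same deconvolution problem.
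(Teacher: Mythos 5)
Your argument is correct, and its first half coincides with the paper's proof: reduce by unconditionality to $\xi,\eta$ in $\R_+^n$, use Theorem~\ref{th:drball} to write $K^\circ$ as a $D_\infty$-ball, i.e.\ $K=\E(\eta_0 B_1)$ with $\eta_0$ independent, get $\P(\eta_{0,i}=0)=0$ from the first part of Lemma~\ref{lemma:dimension}, and apply Theorem~\ref{thr:ws} with $\alpha=1$ to turn \eqref{eq:12} into the zonoid equivalence of $\xi\eta_0$ and $\eta\eta_0$. Where you diverge is the final ``deconvolution''. The paper avoids it altogether: it rewrites $\E|\langle u,\eta_0|\xi|\rangle|=\E h(uL,|\xi|)$ with the zonoid $L=\E[-\eta_0,\eta_0]$, uses the \emph{converse} part of Lemma~\ref{lemma:dimension} (with the segment $I$) to see that $F(L,e_i)$ are singletons, and then simply invokes Theorem~\ref{thr:main}. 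Your route instead proves the needed injectivity from scratch: stratify by support patterns, pass to log-ratio coordinates where the lift measure of $\xi\eta_0$ becomes $a_\xi*a_{\eta_0}$, and kill the factor $a_{\eta_0}$ by analytic continuation of the Fourier--Laplace transforms. This works, but note that your exponential-moment bound only gives finiteness of the two-sided Laplace transforms on the closed unit simplex (by convexity from the points $0,e_1,\dots,e_{m-1}$), so the common tube of holomorphy sits over the \emph{open} simplex and does not contain $0$; you should invoke the strict positivity of $\widehat{a_{\eta_0}}$ at real points in the interior of the simplex (which holds since $\eta_0>0$ a.s.), rather than at $0$, and then tilt by such an interior point before applying Fourier uniqueness — a harmless repair. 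In effect your analytic step re-derives, in this special case, what the paper packages into Theorem~\ref{thr:main} via Lemmas~\ref{lemma:kabluchko} and \ref{lemma:powers-all}; the paper's version is shorter because it reuses that machinery, while yours is more self-contained and makes the multiplicative-convolution structure of the problem explicit.
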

\begin{proof}
  By Theorem~\ref{th:drball} we may assume that $p=\infty$. Now assume
  that \eqref{eq:12} holds for $K=\E(\zeta B_1)$ where $\zeta$ is a
  random vector in $\R_+^n$ independent of $\xi$ and $\eta$. This
  implies
  \begin{displaymath}
    \E\|u\zeta\xi\|_\infty = \E\|u\zeta\eta\|_\infty,\quad u\in\R^n.
  \end{displaymath}
  By Theorem~\ref{thr:ws},
  \begin{displaymath}
    \E|\langle u, \zeta |\xi| \rangle| 
    = \E|\langle u,\zeta |\eta| \rangle|,\quad u\in\R^n.
  \end{displaymath}
  Now let $L=\E[-\zeta,\zeta]$ so that
  \begin{displaymath}
    \E h(uL,|\xi|)=\E h(uL,|\eta|),\quad u\in\R^n.
  \end{displaymath}
  Since the support sets of the zonoid $L$ in coordinate directions
  are singletons by Lemma~\ref{lemma:dimension},
  the claim follows from Theorem~\ref{thr:main}.
\end{proof}

\begin{corollary}
  \label{th:lpzonoid}
  If $\xi$ and $\eta$ are integrable random vectors in
  $\R_+^n$ and $p\in(1,\infty]$, then
  \begin{equation}
  \label{eq:pball}
    \E \|u\xi\|_p=\E\|u\eta\|_p,\quad u\in\R^n, 
  \end{equation}
  if and only if $\xi$ and $\eta$ are zonoid equivalent.
\end{corollary}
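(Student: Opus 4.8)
The plan is to derive Corollary~\ref{th:lpzonoid} directly from Theorem~\ref{th:maxzon} by identifying the $K$-transform with the map $u \mapsto \E\|u\xi\|_p$. First I would observe that for $p \in (1,\infty]$ the conjugate exponent $q$ defined by $1/p + 1/q = 1$ satisfies $q \in [1,\infty)$, and that $\|uv\|_p = h(vB_q,u)$ for the $\ell_q$-ball $B_q$; hence $\E\|u\xi\|_p = \E h(uB_q,\xi)$, so condition \eqref{eq:pball} is exactly \eqref{eq:12} with $K = B_q$. To apply Theorem~\ref{th:maxzon} I must check its two hypotheses for $K = B_q$: that the support sets $F(B_q,e_i)$ are singletons, and that $K^\circ = B_q^\circ$ is a $D_p$-ball for some $p \in (1,\infty]$. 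The first holds because $B_q$ is a smooth convex body whenever $q \in (1,\infty)$, so its support set in any direction is a single point; for $q = 1$ we use $p = \infty$ separately, but since here $p \in (1,\infty]$ forces $q \in [1,\infty)$, the only borderline case $q = 1$ corresponds to $p = \infty$, and $B_1$ has single-point support sets in the coordinate directions $e_i$ because $F(B_1,e_i) = \{e_i\}$. The second hypothesis is immediate: $B_q^\circ = B_p$ is the $\ell_p$-ball, which is a $D_p$-ball with spectral measure the Dirac mass at $(1,\dots,1)$, as noted right after Definition~\ref{def:lp1}.

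With both hypotheses verified, Theorem~\ref{th:maxzon} gives that $B_q$ is unconditionally D-universal, which by definition means that \eqref{eq:12} for $K = B_q$ implies the zonoid equivalence of $|\xi|$ and $|\eta|$. Since $\xi$ and $\eta$ are assumed to take values in $\R_+^n$, we have $|\xi| = \xi$ and $|\eta| = \eta$ almost surely, so this is precisely the zonoid equivalence of $\xi$ and $\eta$. The converse direction, that zonoid equivalence implies \eqref{eq:pball}, is the easy implication already flagged in the introduction: zonoid equivalence is equivalent to \eqref{eq:2} for all one-homogeneous even $f$ by Theorem~\ref{thr:m:s:s}, and $x \mapsto \|ux\|_p$ is such a function for each fixed $u$.

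The main obstacle, such as it is, is bookkeeping about the smoothness of $B_q$ and the edge case $q = 1$; one should state cleanly why $F(B_q,e_i)$ is a singleton in each regime rather than invoking smoothness uniformly, since $B_1$ is not smooth but still has the required property at the coordinate directions. Everything else is a direct substitution into results already established, so the proof is short.

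\begin{proof}[Proof of Corollary~\ref{th:lpzonoid}]
  Let $q \in [1,\infty)$ be the conjugate exponent with $1/p + 1/q =
  1$. Then $\|uv\|_p = h(vB_q,u)$, so \eqref{eq:pball} is the same as
  \eqref{eq:12} for $K = B_q$. The support sets $F(B_q,e_i)$ are
  singletons: for $q \in (1,\infty)$ the body $B_q$ is smooth, and for
  $q = 1$ one has $F(B_1,e_i) = \{e_i\}$. Moreover $B_q^\circ = B_p$
  is a $D_p$-ball with spectral measure the Dirac measure at
  $(1,\dots,1)$. Hence Theorem~\ref{th:maxzon} applies and $B_q$ is
  unconditionally D-universal, so \eqref{eq:pball} implies that $|\xi|$
  and $|\eta|$ are zonoid equivalent. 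As $\xi,\eta \in \R_+^n$, this
  is the zonoid equivalence of $\xi$ and $\eta$. Conversely, if $\xi$
  and $\eta$ are zonoid equivalent, then \eqref{eq:2} holds for all
  one-homogeneous even $f$ by Theorem~\ref{thr:m:s:s}; applying this
  with $f(x) = \|ux\|_p$ gives \eqref{eq:pball}.
\end{proof}
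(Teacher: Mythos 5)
Your proof is correct and follows exactly the paper's route: the paper's own proof is simply ``in Theorem~\ref{th:maxzon} set $K=B_q$ with $1/p+1/q=1$.'' Your additional verifications (that $F(B_q,e_i)$ are singletons, that $B_q^\circ=B_p$ is a $D_p$-ball, and the easy converse via Theorem~\ref{thr:m:s:s}) are accurate and merely make explicit what the paper leaves implicit.
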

\begin{proof}
  In Theorem~\ref{th:maxzon} set $K = B_q$ with $1/p+1/q=1$.
\end{proof}

In the case of strictly positive $\xi$ and $\eta$, an analytical proof
of the equivalence in Corollary~\ref{th:lpzonoid} is given in
\cite{mol:sch11m}. Note that the case $p=1$ is excluded (see
Example~\ref{ex:1}); it would correspond to $K=B_\infty$ which does
not satisfy the condition on support sets imposed in
Theorem~\ref{th:maxzon}.

By Theorems~\ref{thr:equiv-pos} and \ref{th:maxzon}, linear
combinations of support functions of polar sets of $D_p$-balls are
dense in the family of support functions of unconditional convex
bodies if $p\in(1,\infty]$ and if the conditions on the support sets are satisfied.

\renewcommand{\thesection}{Appendix:}
\renewcommand{\thetheorem}{A.\arabic{theorem}}
\renewcommand{\theequation}{A.\arabic{equation}}

\section{General transformations}
\label{sec:appendix}

For completeness, we mention the following result generalising
Theorem~\ref{thr:equiv} to a more general subfamily $G$ of linear
transformations on $\R^n$. The $G$-invariant Minkowski class generated
by $K\in\sK_0$ consists of the limits in the Hausdorff
metric of the linear combinations
\begin{displaymath}
  \alpha_1 g_1K+\cdots+\alpha_m g_m K, 
\end{displaymath}
where $\alpha_1,\dots,\alpha_m>0$, $g_1,\dots,g_m\in G$ and $m\geq1$.
Due to the presence of scaling constants $\alpha_i$, it is possible to
assume that $G$ is bounded.  Let $\nu$ be a finite signed measure on $G$
equipped with the Borel $\sigma$-algebra. Denote
\begin{displaymath}
  (T_{K,G}\nu)(u)=\int_G h(gK,u)\nu(dg),\quad u\in\R^n.
\end{displaymath}
The dominated convergence theorem yields that $T_{K,G}\nu$ is
continuous.  Unlike the case of diagonal transformations, it is not
possible to swap $g$ and $u$ in $h(gK,u)$.  The following result can
be derived following the same arguments as in \cite[Th.~3.5.3]{schn2}.

\begin{proposition}
  \label{prop:m-class}
  A convex body $L$ belongs to the $G$-invariant Minkowski class
  generated by $K$ if and only if $h(L,u)=(T_{K,G}\nu)(u)$ for a
  finite even measure $\nu$ on $G$.
\end{proposition}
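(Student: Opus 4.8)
The plan is to mirror the classical proof of \cite[Th.~3.5.3]{schn2}, which characterizes zonoids among convex bodies, adapting it to the setting where segments are replaced by the fixed body $K$ and rotations by the bounded family $G$. The statement to prove is: a convex body $L$ lies in the $G$-invariant Minkowski class generated by $K$ if and only if $h(L,u)=(T_{K,G}\nu)(u)$ for a finite even measure $\nu$ on $G$.

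For necessity, suppose $L$ is a limit in the Hausdorff metric of bodies of the form $\alpha_1 g_1 K+\cdots+\alpha_m g_m K$. Each such body has support function $\sum_i \alpha_i h(g_iK,\cdot)=(T_{K,G}\nu_m)(\cdot)$ for the discrete measure $\nu_m=\sum_i \alpha_i\delta_{g_i}$; since $G$ is bounded and we may symmetrize (replacing $\nu_m$ by its even part, which does not change $T_{K,G}\nu_m$ because $h(gK,u)=h(-gK,-u)$ and $K=-K$, so $h((-g)K,u)=h(gK,u)$), we get a sequence of finite even measures on $\overline{G}$ with uniformly bounded total mass $\nu_m(G)$ controlled by the norm of $h(L,\cdot)$ evaluated at a suitable direction (using that $h(gK,u)\geq c>0$ for $g,u$ in compact sets when $K$ has nonempty interior — or more carefully, bounding masses via integration against a fixed test function). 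By weak-* compactness of bounded measures on the compact closure $\overline{G}$ (Banach–Alaoglu / Prokhorov, using boundedness of $G$), extract a weakly convergent subsequence $\nu_{m_k}\to\nu$; by dominated convergence $T_{K,G}\nu_{m_k}\to T_{K,G}\nu$ pointwise, so $h(L,u)=(T_{K,G}\nu)(u)$. One must check $\nu$ is supported on $G$ (not merely $\overline{G}$) or, more cleanly, simply allow $\nu$ on $\overline{G}$ and observe the proposition can be stated for the closure, or argue that $\overline{G}\subseteq G$ under a mild closedness assumption; the cleanest route is to assume $G$ compact after closure without loss of generality. For sufficiency, if $h(L,u)=\int_G h(gK,u)\,\nu(dg)$ with $\nu$ a finite even measure, approximate $\nu$ weakly by finitely supported measures $\nu_m=\sum_i \alpha_i\delta_{g_i}$ with $\alpha_i>0$ and $g_i\in G$; then $T_{K,G}\nu_m\to T_{K,G}\nu=h(L,\cdot)$ uniformly on the unit sphere (uniform convergence follows from weak convergence together with equicontinuity of $g\mapsto h(gK,u)$ uniformly in $u$, which holds because $G$ is bounded and $K$ is compact), and uniform convergence of support functions is equivalent to Hausdorff convergence of the bodies, so $L$ is the Hausdorff limit of the $G$-invariant combinations $\sum_i\alpha_i g_iK$.

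The key technical steps, in order, are: (1) reduce to $G$ compact by passing to the closure and noting $h(gK,u)$ extends continuously; (2) establish the uniform mass bound on the approximating measures $\nu_m$ in the necessity direction — this requires a lower bound $h(gK,u_0)\geq c>0$ for some fixed $u_0$ uniformly over $g\in\overline{G}$, which holds provided the bodies $gK$ do not degenerate, e.g. if $K$ has nonempty interior and $G$ consists of invertible maps bounded away from singular, or alternatively one bounds $\int h(gK,\cdot)\,d\nu_m$ against a strictly positive test function on the sphere; (3) invoke weak-* compactness to extract a limit measure; (4) pass to the limit in $T_{K,G}$ via dominated convergence, using that $\{h(gK,\cdot):g\in\overline{G}\}$ is uniformly bounded and equicontinuous; (5) for sufficiency, approximate $\nu$ weakly by atomic measures and transfer weak convergence to uniform convergence of support functions via equicontinuity.

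The main obstacle I expect is step (2): controlling the total masses $\nu_m(G)$ of the approximating discrete measures in the necessity direction. In the classical zonoid case one uses that $h([-v,v],u)=|\langle u,v\rangle|$ integrates to a fixed positive constant over the sphere, giving an immediate mass bound; here the analog requires that $\int_{\Sphere} h(gK,u)\,du$ is bounded below uniformly in $g\in\overline{G}$, equivalently that the bodies $gK$ have mean width bounded away from zero. If $G$ is bounded but can approach singular transformations, this can fail, so one needs an assumption — either that $\overline{G}$ consists of invertible maps (hence $gK$ always has nonempty interior, and by compactness its mean width is bounded below), or one restricts to the relevant $G$ arising in applications where this is automatic. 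Once this uniform non-degeneracy is in hand, the remaining arguments are the routine compactness-and-continuity steps exactly paralleling \cite[Th.~3.5.3]{schn2}.
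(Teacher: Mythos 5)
Your proposal follows essentially the same route as the paper, which offers no detailed argument of its own but simply states that the result ``can be derived following the same arguments as in \cite[Th.~3.5.3]{schn2}''; your necessity/sufficiency scheme (mass bounds for the approximating discrete measures, weak-$*$ compactness on the closure of $G$, and discrete approximation of $\nu$ for the converse) is exactly that adaptation of Schneider's zonoid argument. The caveats you flag---a uniform lower bound on the size (mean width) of $gK$ over $\overline{G}$ and the need to take $G$ closed so that the limit measure is supported on $G$ itself---are genuine, but they are left implicit in the paper's informal setup rather than being points where your argument diverges from its proof.
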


\begin{theorem}
  \label{thr:group}
  The linear space spanned by the support functions from the
  $G$-invariant Minkowski class generated by $K$ is dense in the
  family of support functions of convex bodies from $\sK_0$ if and
  only if the transform
  \begin{equation}
    \label{eq:15}
    \mu\mapsto \int_{\Sphere} h(gK,v)\mu(dv), \quad g\in G,
  \end{equation}
  is injective for finite even signed measures $\mu$ on $\Sphere$.
\end{theorem}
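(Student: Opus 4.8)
My plan is to recast the statement as a Hahn--Banach duality between the transform \eqref{eq:15} and the map $\nu\mapsto T_{K,G}\nu$, mirroring the proof of Theorem~\ref{thr:equiv}. The one genuine difference from the diagonal case is that there the kernel $h(vK,u)=h(uK,v)$ is symmetric in its two arguments, so $T_K$ can be viewed as an operator of $C_e(\Sphere)$ into itself and one may restrict to measures with continuous densities; here the index sets $G$ and $\Sphere$ are unrelated, so the annihilator argument has to be run directly on $C_e(\Sphere)$ and its dual $\sM_e(\Sphere)$, the space of even finite signed measures. Since $h(gK,\cdot)=h((-g)K,\cdot)$ for origin-symmetric $K$, passing from $G$ to $G\cup(-G)$ changes neither the generated Minkowski class nor the transform \eqref{eq:15}, so I may assume $G=-G$, with ``even'' on $G$ understood with respect to $g\mapsto-g$; being bounded, $G$ has compact closure and $(g,v)\mapsto h(gK,v)$ is jointly continuous, hence bounded, on $\overline G\times\Sphere$.

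First I would pin down the subspace at issue: by Proposition~\ref{prop:m-class} together with the Jordan decomposition, the linear space $V$ spanned by the support functions from the $G$-invariant Minkowski class generated by $K$ is exactly $\{\,T_{K,G}\nu:\nu\in\sM_e(G)\,\}\subseteq C_e(\Sphere)$. As in Theorem~\ref{thr:equiv}, $V$ being dense in the support functions of bodies from $\sK_0$ is equivalent to $\overline V=C_e(\Sphere)$, because the linear span of all those support functions is itself dense in $C_e(\Sphere)$ and $\overline V$ is a closed subspace.

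Next I would compute the annihilator of $V$ inside $\sM_e(\Sphere)=C_e(\Sphere)^\ast$. For $\mu\in\sM_e(\Sphere)$ put $\psi_\mu(g)=\int_{\Sphere}h(gK,v)\,\mu(dv)$, so that $\psi_\mu$ is the image of $\mu$ under the transform \eqref{eq:15}; it is a continuous even function of $g\in G$, the continuity following from the dominated-convergence argument already used for $T_{K,G}\nu$ and the evenness from $h((-g)K,\cdot)=h(gK,\cdot)$. For $\nu\in\sM_e(G)$, Fubini's theorem is applicable since the kernel is bounded and the measures are finite, and yields $\int_{\Sphere}(T_{K,G}\nu)(u)\,\mu(du)=\int_G\psi_\mu(g)\,\nu(dg)$. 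Hence $\mu$ annihilates $V$ if and only if $\int_G\psi_\mu\,d\nu=0$ for every even finite signed measure $\nu$ on $G$; testing against $\nu=\frac12(\delta_g+\delta_{-g})$ for each $g$ shows this is equivalent to $\psi_\mu\equiv0$ on $G$, that is, to $\mu$ lying in the kernel of \eqref{eq:15}.

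Finally, the standard fact that the range of a bounded linear operator is dense exactly when its adjoint is injective --- equivalently Hahn--Banach, cf.\ \cite[Th.~III.4.5]{wern00} --- gives $\overline V=C_e(\Sphere)$ if and only if the only $\mu\in\sM_e(\Sphere)$ annihilating $V$ is $\mu=0$, which by the previous step means precisely that \eqref{eq:15} is injective on even finite signed measures. This establishes both implications at once. I do not expect a real obstacle: the points deserving care are the asymmetry of the setup just noted, the harmless reduction to $G=-G$, and the routine identification of ``dense in the support functions of $\sK_0$'' with ``dense in $C_e(\Sphere)$'', while all the analytic ingredients --- joint continuity and boundedness of $h(gK,v)$, finiteness of the measures, evenness and continuity of $\psi_\mu$ --- follow immediately from the boundedness of $G$.
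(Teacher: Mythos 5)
Your proof is correct, and for the substantive (sufficiency) direction it is essentially the paper's argument: the same Hahn--Banach/annihilator duality between the transform \eqref{eq:15} and $\nu\mapsto T_{K,G}\nu$, with Fubini identifying the annihilator of the span with the kernel of \eqref{eq:15}; phrasing it as an annihilator computation in $C_e(\Sphere)$ versus $\sM_e(\Sphere)$ rather than via the adjoint $T'_{K,G}$ acting into the dual of $\sM_G$ is only a cosmetic (arguably cleaner) repackaging of \cite[Th.~III.4.5]{wern00}. The genuine divergence is in necessity: you extract it from the same annihilator identity, using the classical fact that the linear span of support functions of symmetric convex bodies is dense in $C_e(\Sphere)$ to pass from ``dense in the support functions of $\sK_0$'' to $\overline V=C_e(\Sphere)$, whereas the paper argues necessity separately by approximating the support function $|\langle u,\cdot\rangle|$ of the segment $I$ and invoking the injectivity of the cosine transform. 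Both routes rest on a standard external ingredient (density of differences of support functions in your case, cosine-transform injectivity in the paper's), so neither is more elementary; yours has the aesthetic advantage of delivering both implications from one computation, while the paper's necessity argument makes explicit that D-universality always forces the cosine transform situation as a special case. Your auxiliary reductions (replacing $G$ by $G\cup(-G)$, joint continuity and boundedness of $h(gK,v)$ on a bounded $G$, identification of $V$ via Proposition~\ref{prop:m-class} and the Jordan decomposition, and the duality $C_e(\Sphere)^\ast\cong\sM_e(\Sphere)$) are all sound and harmless.
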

\begin{proof}
  \textsl{Necessity.}  The support function $|\langle u,v\rangle|$ of
  the segment $uI$ can be approximated by linear combinations of the
  support functions $h(g_iK,v)$. Hence, if the right-hand side of
  \eqref{eq:15} vanishes for all $g\in G$, the cosine transform of
  $\mu$ vanishes. So the transform \eqref{eq:15} is injective.
 
  \textsl{Sufficiency.}  
  Denote by $\sM_G$ the family of finite signed measures on $G$ with
  the total variation norm. The operator $T_{K,G}$ is continuous 
  and maps measures from $\sM_G$ to continuous even functions
  on the unit sphere.  Its adjoint $T'_{K,G}$ is an operator on the
  family $\sM_e$ of signed finite even measures on $\Sphere$, and
  $T'_{K,G}\mu$ belongs to the dual space of $\sM_G$. For $\mu \in
  \sM_e$ and $\nu\in\sM_G$,
  \begin{displaymath}
    \langle T'_{K,G} \mu,\nu\rangle = \langle \mu,T_{K,G}\nu\rangle
    =\int_{\Sphere} (T_{K,G} \nu)d\mu,
  \end{displaymath}
  where the left-hand side refers to the pairing of $T'_{K,G} \mu$ and
  $\nu$.  If $T'_{K,G} \mu = 0$, then
  \begin{displaymath}
    \int_{\Sphere}\int_G h(gK,u)\nu(dg)\mu(du)=0
  \end{displaymath}
  for all $\nu\in\sM_G$. Changing the order of integration yields that 
  \begin{displaymath}
    \int_{\Sphere} h(gK,u)\mu(du)=0,
  \end{displaymath}
  whence $\mu = 0$ by injectivity. The triviality of the kernel of
  $T'_{K,G}$ yields that the range of $T_{K,G}$ is dense in the space of
  continuous functions on $\Sphere$, see \cite[Th.~III.4.5]{wern00}.
\end{proof}

If $G$ is the group of all invertible linear transformations, then
\cite[Th.~5(1)]{ales03} yields that, for each symmetric convex body
$K$, the equality $\E h(gK,\xi)=\E h(gK,\eta)$, $g\in G$, holds if and
only if $\xi$ and $\eta$ are zonoid equivalent. 

Similar results hold in the unconditional case meaning that the linear
combinations of support functions of convex bodies from the
$G$-invariant Minkowski class are dense in the family of support
functions of convex bodies from $\sK_s$ if and only if $\E
h(gK,\xi)=\E h(gK,\eta)$ for all $g\in G$ yields that $|\xi|$ and
$|\eta|$ are zonoid equivalent.  Note that, for general $G$, the
Minkowski class may contain convex bodies that are not in $\sK_s$
though $K$ is from $\sK_s$.  In the special case of diagonal
transformations this does not occur.

\section*{Acknowledgement}
\label{sec:acknowledgement}

This work was supported by the Swiss National Science Foundation Grant
200021\_153597.


\newcommand{\noopsort}[1]{} \newcommand{\printfirst}[2]{#1}
  \newcommand{\singleletter}[1]{#1} \newcommand{\switchargs}[2]{#2#1}

\end{document}